\numberwithin{equation}{section}
\newtheorem{thm}{Theorem}[section]
\newtheorem*{thm*}{Theorem}
\newtheorem{lem}[thm]{Lemma}
\newtheorem{prop}[thm]{Proposition}
\theoremstyle{definition}
\newtheorem{defn}[thm]{Definition}
\theoremstyle{remark}
\newtheorem{rem}[thm]{Remark}
\DeclareMathOperator*{\aplimsup}{ap\,\varlimsup}
\DeclareMathOperator*{\apliminf}{ap\,\varliminf}
\newcommand{\TestF}{{\mathrm {TestF}}}
\newcommand{\TestV}{{\mathrm {TestV}}}
\newcommand{\lip}{{\mathrm {lip}}}
\newcommand{\diff}{{\mathrm{d}}}
\newcommand{\DIFF}{{\mathrm{D}}}
\newcommand{\de}{\ensuremath{\,\mathrm d}}
\newcommand{\capa}{{\mathrm {Cap}}}
\newcommand{\mres}{\mathbin{\vrule height 1.6ex depth 0pt width 0.13ex\vrule height 0.13ex depth 0pt width 1.3ex}}
\let \llcorner \mres
\newcommand{\dist}{{\mathsf{d}}}
\newcommand{\mass}{{\mathsf{m}}}
\newcommand{\XX}{{\mathsf{X}}}
\newcommand{\YY}{{\mathsf{Y}}}
\newcommand{\ZZ}{{\mathsf{Z}}}
\newcommand{\FF}{{\mathcal{F}}}
\newcommand{\GG}{{\mathcal{G}}}
\newcommand{\RR}{\mathbb{R}}
\newcommand{\Reg}{\mathcal{R}}
\newcommand{\NN}{\mathbb{N}}
\newcommand{\QQ}{\mathbb{Q}}
\newcommand{\HH}{\mathcal{H}}
\newcommand{\Tan}{\mathrm {Tan}}
\newcommand{\defeq}{\mathrel{\mathop:}=}
\newcommand{\LIP}{\mathrm {LIP}}
\newcommand{\BV}{\mathrm {BV}}
\newcommand{\BVv}{{\mathrm {BV}}(\XX)}
\newcommand{\tanXcap}{L^0_\capa(T\XX)}
\newcommand{\RCD}{{\mathrm {RCD}}}
\newcommand{\PI}{{\mathrm {PI}}}
\renewcommand{\epsilon}{\varepsilon}
\renewcommand{\limsup}{\varlimsup}
\renewcommand{\liminf}{\varliminf}
\def\Xint#1{\mathchoice
	{\XXint\displaystyle\textstyle{#1}}%
	{\XXint\textstyle\scriptstyle{#1}}%
	{\XXint\scriptstyle\scriptscriptstyle{#1}}%
	{\XXint\scriptscriptstyle\scriptscriptstyle{#1}}%
	\!\int}
\def\XXint#1#2#3{{\setbox0=\hbox{$#1{#2#3}{\int}$}
		\vcenter{\hbox{$#2#3$}}\kern-.5\wd0}}
\def\dashint{\Xint-}
\let\fint\dashint
\title{The Rank-One Theorem on \(\rm RCD\) spaces}
\author[Gioacchino Antonelli]{Gioacchino Antonelli}
\address[Gioacchino Antonelli]{Scuola Normale Superiore, Piazza dei Cavalieri, 7, 56126 Pisa, Italy.}
\email{ga2434@nyu.edu}
\author[Camillo Brena]{Camillo Brena}
\address[Camillo Brena]{Scuola Normale Superiore, Piazza dei Cavalieri, 7, 56126 Pisa, Italy.}
\email{camillo.brena@sns.it}
\author[Enrico Pasqualetto]{Enrico Pasqualetto}
\address[Enrico Pasqualetto]{Scuola Normale Superiore, Piazza dei Cavalieri, 7, 56126 Pisa, Italy.}
\email{enrico.e.pasqualetto@jyu.fi}
\begin{document}

\date{\today}
\keywords{Function of bounded variation, Rank-One Theorem, $\rm RCD$ space}
\subjclass[2020]{53C23, 26A45, 49Q15, 28A75}

\begin{abstract}
We extend Alberti's Rank-One Theorem to $\mathrm{RCD}(K,N)$ metric measure spaces.
\end{abstract}

\maketitle
\tableofcontents

\section{Introduction}
\subsection{The Rank-One Theorem in the Euclidean setting}
Let $\Omega$ be an open subset of $\mathbb R^n$, and let $u\in\mathrm{BV}(\Omega;\mathbb R^k)$, {i.e.\ $u=(u_1,\dots,u_k)\in (\BV(\Omega))^k$}. By using the Lebesgue--Radon--Nikod\'{y}m theorem one can write the distributional derivative of $u$ as 
\[
\DIFF u=\DIFF^au+\DIFF^su,
\]
where $\DIFF^au$ is the absolutely continuous part of $\DIFF u$ with respect to the Lebesgue measure $\mathcal{L}^n$, and $\DIFF^s u$ is the singular part of $\DIFF u$. We denote with $\DIFF u/|\DIFF u|$ the matrix-valued Lebesgue--Radon--Nikod\'{y}m density of $\DIFF u$ with respect to the total variation $|\DIFF u|$. Notice that the total variation of the singular part $|\DIFF^s u|$ is equal to the singular part of the total variation $|\DIFF u|^s$.
\smallskip

In 1988 Ambrosio and De Giorgi \cite{DeGiorgiAmbrosio}, motivated by the study of some functionals coming from the Mathematical Physics, conjectured the following:
\begin{center}
\textbf{Rank-One property}: For every $u\in\mathrm{BV}(\Omega;\mathbb R^k)$ the matrix $\DIFF u/|\DIFF u|$ has rank-one $|\DIFF u|^s$-almost everywhere.
\end{center}

In 1993 Alberti \cite{alberti1993} solved in the affirmative the previous conjecture, see also the account in \cite{DeLellis2008}. 
\smallskip 

It is worth observing that the ideas used in \cite{alberti1993} showed up to be very robust for further developments of Geometric Measure Theory and the rectifiability theory in Euclidean spaces and even beyond in the metric setting. In \cite{alberti1993}, as a main step of the proof, Alberti proved that given an arbitrary Radon measure $\mu$ on a $k$-dimensional plane $V$ in $\mathbb R^n$ that is singular with respect to $\mathcal{H}^k\llcorner V$, one can associate to $\mu$ a bundle $E(\mu,\cdot)$ whose fibers have dimension at most 1. The fiber $E(\mu,x)$ of this bundle is made by all the vectors $v\in\mathbb R^k$ such that $v\mu$ is \emph{tangent} in $x$, in a precise sense, to the derivative of a $\mathrm{BV}$ function on $V$. What happens, moreover, is that the restriction of $\mu$ to the set where $E(\mu,\cdot)$ is 1 can be written as $\int_I \mu_t\,\dd t$, where $\mu_t=\mathcal{H}^{k-1}\llcorner S_t$, and $S_t$ is $(k-1)$-rectifiable in $V$.

In the language of \cite{ACP10}, which collects several other fine results for the theory of rectifiability in $\mathbb R^n$, the previous result means that, on the set where the fiber is one-dimensional, $\mu$ is \emph{$(k-1)$-representable}: namely, it can be written as the integral of measures that are $(k-1)$-rectifiable. At the basis of this possibility of representing a measure as integral of rectifiable measures is the idea of the Alberti representations.

Another interesting contribution that originated from this circle of ideas is the result by Alberti and Marchese in \cite{AlbertiMarchese}. In that paper the authors associate to every Radon measure $\mu$ on $\mathbb R^n$ the minimal (unique $\mu$-almost everywhere) bundle $V(\mu,\cdot)$ such that every real-valued Lipschitz function on $\mathbb R^n$ is differentiable along $V(\mu,x)$ for $\mu$-almost every $x\in\mathbb R^n$. Alberti representations were also recently used by Bate and Bate--Li in the study of rectifiability in the general metric setting, see \cite{BateJAMS, BateLi}. For further readings, one can consult the recent survey by Mattila, in particular \cite[Chapter 8]{MattilaSurvey}, and \cite[Chapter 13]{MattilaSurvey}.
\smallskip

Besides its theoretical interest, the Rank-One Theorem soon gave important consequences in the Calculus of Variations. In \cite{AMBROSIO199276} Ambrosio and Dal Maso exploited it to derive the expression of the relaxation (in $\mathrm{BV}$) of a functional defined on $C^1$ functions as the integral of a quasi-convex function of linear growth of the gradient. See also \cite{Kristensen2009RelaxationOS} for a generalization. Moreover, Fonseca and M\"uller generalized the result in \cite{AMBROSIO199276} for integrands that might not depend solely on the gradient, but also on the space variable and the function itself \cite{Fonseca1993RelaxationOQ}. For further details we refer the reader to \cite[Chapter 5]{AFP00}.
\smallskip

As an added value to the theoretical interest of the Rank-One Theorem, in 2016 De Philippis and Rindler \cite{DPR} showed a general structure theorem for $\mathcal{A}$-free vector-valued Radon measures on Euclidean spaces, where $\mathcal{A}$ is a linear constant-coefficient differential operator, from which the Rank-One Theorem can be derived as a consequence. We also remark that Massaccesi and Vittone recently gave a very short proof of the Rank-One Theorem based on the theory of sets of finite perimeter \cite{MassaccesiVittone19}, and with Don they used this simplified strategy to prove the analogue of the Rank-One Theorem in some Carnot groups \cite{DonMasVit17}.

\subsection{Main result}
Nowadays a well-established notion of $\mathrm{BV}$ function is available in the metric measure setting. Such a notion was proposed by Miranda \cite{MIRANDA2003}, then studied by Ambrosio \cite{AmbrosioAhlfors, amb01}, and recently by Ambrosio--Di Marino \cite{ADM2014}.
\smallskip 

According to this approach, given a metric measure space $(\XX,\dist,\mass)$ the total variation of the derivative of $f\in L^1_{\mathrm{loc}}(\XX,\mass)$ is the relaxation in $L^1_{\mathrm{loc}}(\XX,\mass)$ of the energy given by the integral of the local Lipschitz constant. Such a definition can be readily extended to define the total variation of a vector-valued function whose components are in $\mathrm{BV}_{\mathrm{loc}}(\XX,\dist,\mass)$, see
\Cref{def:TotalVariationBV} for the precise definition. 

Nevertheless, in this way one is giving a meaning to the total variation $|\DIFF F|$ of an arbitrary $F\in \mathrm{BV}_{\mathrm{loc}}(\XX,\dist,\mass)^k$, while it is in general missing a good notion for the Lebesgue--Radon--Nikod\'{y}m derivative $\DIFF F/|\DIFF F|$. 
\smallskip

In the setting of $\RCD$ metric measure spaces, see \Cref{sec:RCD} for details and references, the study of calculus has been blossoming very fast in the last decade. In particular, very recently in \cite{debin2019quasicontinuous} the authors propose and study the notion of $L^0(\mathrm{Cap})$-normed $L^0(\mathrm{Cap})$-module, and the notion of capacitary tangent module $L^0_{\mathrm{Cap}}(T\XX)$, where $\mathrm{Cap}$ denotes the usual Capacity \eqref{eqn:Capacity}. We refer to \Cref{sec:RCD} for the definitions and further details.
\medskip

A fundamental contribution of \cite{bru2019rectifiability}, building on \cite{debin2019quasicontinuous}, is the fact that, in the setting of $\RCD(K,N)$ spaces, for an arbitrary set of finite perimeter $E$ with finite mass, one can give a meaning to the unit normal $\nu_E=\DIFF\chi_E/|\DIFF\chi_E|$ as an element of the capacitary tangent module $L^0_{\mathrm{Cap}}(T\XX)$ such that the Gauss--Green formula holds, see \cite[Theorem 2.4]{bru2019rectifiability}. The Gauss--Green formula has been successfully employed, together with the former work by Ambrosio--Bruè--Semola \cite{ambrosio2018rigidity}, to obtain the $(n-1)$-rectifiability of the essential boundary of any set of locally finite perimeter in an $\RCD$ space of essential dimension $n$, see \cite{bru2019rectifiability, bru2021constancy}.

The Gauss--Green formula in \cite[Theorem 2.4]{bru2019rectifiability} has been generalized by the second named author together with Gigli in \cite{BGBV} for vector--valued $\mathrm{BV}$ functions. We give below the statement of the Gauss--Green formula in \cite{BGBV}, where the density $\nu_F=\DIFF F/|\DIFF F|$ is implicitly defined.

\begin{thm}[{\cite[Theorem 3.13]{BGBV}}]\label{thm:Intro}
Let $k\geq 1$ be a natural number, let $K\in\mathbb R$, and let $N\geq 1$. Let $(\XX,\dist,\mass)$ be an $\RCD(K,N)$ space and let $F\in \BV(\XX,\dist,\mass)^k$. Then there exists a unique $\nu_F\in L^0_\capa(T\XX)^k$, up to $|\DIFF F|$-almost everywhere equality, such that $|\nu_F|=1\ |\DIFF F|$-almost everywhere, and 
    $$
    \sum_{j=1}^k\int_\XX F_j{\rm div}(v_j)\dd{\mass}=-\int_\XX \pi_{|\DIFF F|}(v)\,\cdot\,\nu_F\dd{|\DIFF F|},\quad\text{ for every $v=(v_1,\dots,v_k)\in\TestV(\XX)^k$.}
    $$
\end{thm}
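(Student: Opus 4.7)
The plan is to reduce to the scalar case $k=1$ (which we treat as known, since it essentially follows by polar decomposition from the Gauss--Green formula for sets of finite perimeter in \cite[Theorem 2.4]{bru2019rectifiability}, extended to general scalar $\BV$ functions via coarea and level-set decomposition) and then assemble the components through Radon--Nikod\'ym densities. \emph{Uniqueness} will be automatic: if $\nu_F$ and $\nu_F'$ both realized the identity, subtracting gives $\int \pi_{|\DIFF F|}(v)\cdot(\nu_F-\nu_F')\,\diff|\DIFF F|=0$ for every $v\in\TestV(\XX)^k$, and the generating property of projected test vector fields in $L^0_\capa(T\XX)^k$ (after taking quasi-continuous representatives, as built in \cite{debin2019quasicontinuous}) forces $\nu_F=\nu_F'$ up to $|\DIFF F|$-negligible sets.

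For \emph{existence}, I would apply the scalar Gauss--Green formula componentwise to obtain, for each $j\in\{1,\dots,k\}$, an element $\nu_{F_j}\in L^0_\capa(T\XX)$ of unit length $|\DIFF F_j|$-a.e.\ satisfying
\[
\int_\XX F_j\,\mathrm{div}(w)\,\diff\mass=-\int_\XX \pi_{|\DIFF F_j|}(w)\cdot\nu_{F_j}\,\diff|\DIFF F_j|,\qquad w\in\TestV(\XX).
\]
The relaxation definition of the vectorial total variation yields $|\DIFF F_j|\ll|\DIFF F|$, so we may set $\rho_j:=\diff|\DIFF F_j|/\diff|\DIFF F|$ and define the candidate
\[
\nu_F:=(\rho_1\nu_{F_1},\dots,\rho_k\nu_{F_k})\in L^0_\capa(T\XX)^k,
\]
with the convention $\rho_j\nu_{F_j}=0$ on $\{\rho_j=0\}$. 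Testing the desired identity against $v=(0,\dots,w,\dots,0)$ reduces it to the scalar identity multiplied through by $\rho_j$; summing in $j$ produces the vectorial Gauss--Green formula for this $\nu_F$.

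What remains, and is the genuine difficulty, is the norm constraint $|\nu_F|=1$ $|\DIFF F|$-a.e., which amounts to the pointwise identity $\sum_{j=1}^k\rho_j^2=1$. The inequality $\leq 1$ should come from lower semicontinuity of the total variation: for an $L^1_{\mathrm{loc}}$-approximating sequence $F_n\to F$ by locally Lipschitz maps, the componentwise bound $|\nabla F_n^j|\leq|\nabla F_n|$ and Cauchy--Schwarz yield $\sqrt{\sum_j|\DIFF F_j|^2}\leq |\DIFF F|$ as measures. The reverse inequality is the subtle one, and is the main technical obstacle: it should follow from a minimality (or, equivalently, duality) characterization of $|\DIFF F|$, exploiting that the smaller measure $|\nu_F|\cdot|\DIFF F|$ paired with the normalized section $\nu_F/|\nu_F|$ still represents the vector distributional derivative through the Gauss--Green identity, hence must dominate $|\DIFF F|$ by the very definition of the latter as the least measure with this property. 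This last step is where the duality theory for $L^0_\capa$-normed modules developed in \cite{debin2019quasicontinuous,bru2019rectifiability} enters in an essential way.
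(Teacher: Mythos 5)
The statement is imported into the present paper as a black box from \cite[Theorem 3.13]{BGBV}; there is no in-house proof here to compare against. On its own terms, your plan is the natural one and its first two thirds are sound: reducing to the scalar Gauss--Green identity, setting $\rho_j := \diff|\DIFF F_j|/\diff|\DIFF F|$, defining $\nu_F:=(\rho_1\nu_{F_1},\dots,\rho_k\nu_{F_k})$, checking the vectorial Gauss--Green identity component by component, and getting uniqueness from the density of (projected, quasi-continuous) test vector fields. This is almost certainly structurally the same as the BGBV argument.

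The gap is exactly where you flag it, but your sketch does not close it. For $\sum_j\rho_j^2\leq 1$: the ``componentwise bound'' $\rho_j\leq 1$ is far weaker than what you need and does not imply $\sum_j\rho_j^2\leq 1$, and the phrase ``Cauchy--Schwarz yields $\sqrt{\sum_j|\DIFF F_j|^2}\leq |\DIFF F|$'' hides the actual work. What does work is to fix a single recovery sequence $(F_n)_n$ for $|\DIFF F|$, test with a countable dense family of \emph{constant} unit vectors $a$ with $a_j\geq 0$, use $\sum_j a_j\lip F_n^j\leq|a|\,(\sum_j(\lip F_n^j)^2)^{1/2}$, exchange $\sum_j a_j$ with $\liminf_n$ (possible precisely because $a_j\geq 0$), localize to arbitrary open sets using that $|\DIFF F|$ is a Radon measure, and then deduce $\sum_j a_j\rho_j\leq 1$ a.e.\ simultaneously for all such $a$, hence $|\rho|\leq 1$ a.e. This needs to be said. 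For the reverse inequality $\geq 1$: you invoke ``the very definition of $|\DIFF F|$ as the least measure with this property,'' but that is not the definition in use. The paper (and \cite{BGBV}) defines $|\DIFF F|$ by relaxation of $\int(\sum_j(\lip f_j)^2)^{1/2}$ along $L^1_{\mathrm{loc}}$-approximating locally Lipschitz sequences, \Cref{def:TotalVariationBV}. The dual ``least admissible measure'' characterization is an equivalence that in this nonsmooth setting must itself be proved --- it is, in effect, a vectorial representation theorem in the spirit of \cite[Theorem 2.4]{bru2019rectifiability} and the duality machinery of \cite{debin2019quasicontinuous}. As written, your argument for $\geq 1$ silently assumes the very content that \cite{BGBV} has to establish.
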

For the notion of divergence of a vector field, the notion of test vector fields $\TestV(\XX)$, the notion of the projection $\pi_{|\DIFF F|}$ and of the norm $|\cdot|$ in $L^0_{\mathrm{Cap}}(T\XX)^k$, we refer the reader to \Cref{sec:RCD}.
\smallskip

The previous \Cref{thm:Intro} tells us that in the setting of $\RCD(K,N)$ spaces we can give a precise meaning to $\DIFF F/|\DIFF F|$ for an arbitrary vector-valued $\mathrm{BV}$ function $F$. Hence it is meaningful to ask if $\DIFF F/|\DIFF F|$ is a rank one matrix $|\DIFF F|^s$-almost everywhere, where $|\DIFF F|^s$ is the singular part of the total variation $|\DIFF F|$. Before giving the main result of this paper we clarify this last sentence by means of a definition. For the definition of the space $L^0(\mathrm{Cap})$, see \Cref{sec:RCD}.

\begin{defn}\label{def:InTheSenseOf}
Let $k\geq 1$ be a natural number, let $K\in\mathbb R$, and let $N\geq 1$. Let $(\XX,\dist,\mass)$ be an $\RCD(K,N)$ space, let $\nu\in L^0_\capa(T\XX)^k$, and let $\mu\ll\capa$ be a Radon measure, where $\mathrm{Cap}$ is the usual Capacity \eqref{eqn:Capacity}. We say that 
$$
{\rm Rk}(\nu)=1\quad\text{$\mu$-almost everywhere},
$$
if there exist $\omega\in L^0_\capa(T\XX)$ and $\lambda_1,\dots,\lambda_k\in L^0(\capa)$ such that for every $i=1,\dots,k$,
$$
\nu_i=\lambda_i\omega\qquad\text{$\mu$-almost everywhere}.
$$
\end{defn}
We remark that this is one of the possible definitions we could have given of {having rank one}. For example, one can give an alternative and equivalent definition exploiting the existence of a local basis (with respect to a decomposition of the space in Borel sets) of $L^0_\capa(T\XX)$, to recover the language of rank of a matrix. It is however clear that in Euclidean spaces, the definition given above coincides with the usual one.

We are now ready to state the main theorem of this paper, which is the generalization of the Rank-One Theorem in the setting of $\RCD(K,N)$ metric measure spaces $(\XX,\dist,\mass)$.
\begin{thm}[Rank-One Theorem for $\mathrm{RCD}(K,N)$ spaces]\label{thm:RankOne}
Let $k\geq 1$ be a natural number, let $K\in\mathbb R$, and let $N\geq 1$. Let $(\XX,\dist,\mass)$ be an $\RCD(K,N)$ space, and let $F\in\BV(\XX,\dist,\mass)^k$. Then 
$$
{\rm Rk}(\nu_F)=1\qquad|\DIFF F|^s\text{-almost everywhere},
$$
in the sense of \Cref{def:InTheSenseOf}, where $\nu_F$ is defined in \Cref{thm:Intro}, and $|\DIFF F|^s$ is the singular part of the total variation $|\DIFF F|$.
\end{thm}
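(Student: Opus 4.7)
The plan is to reduce the statement, via the coarea formula and the theory of sets of finite perimeter in $\mathrm{RCD}(K,N)$ spaces of \cite{bru2019rectifiability, bru2021constancy}, to the classical Euclidean Rank-One Theorem applied on Euclidean pmGH tangent spaces.

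First, I fix a countable dense subset $\Xi \subset S^{k-1}\cap \QQ^k$ and, for each $\xi \in \Xi$, consider the scalar function $f_\xi := \xi \cdot F = \sum_j \xi_j F_j \in \BV(\XX)$. Applying the vectorial Gauss--Green formula of \Cref{thm:Intro} together with its scalar version for $f_\xi$, and using the uniqueness of the polar decomposition, one obtains the pointwise identification
$$
\xi \cdot \nu_F = h_\xi \,\nu_{f_\xi} \quad |\DIFF F|\text{-a.e., for some } h_\xi \in L^0(\capa).
$$
Therefore the condition $\mathrm{Rk}(\nu_F)=1$ $|\DIFF F|^s$-almost everywhere, in the sense of \Cref{def:InTheSenseOf}, is equivalent to asking that the family $\{\nu_{f_\xi}(x)\}_{\xi \in \Xi}$ lies in a single $L^0(\capa)$-line of $L^0_\capa(T\XX)$ for $|\DIFF F|^s$-a.e. $x$. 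By the $\BV$ coarea formula on $\mathrm{RCD}(K,N)$ spaces, $|\DIFF f_\xi|^s = \int_\RR |\DIFF \chi_{\{f_\xi > t\}}|^s\,\dd t$, so at $|\DIFF f_\xi|^s$-a.e. $x$ there is a Borel choice of level $t_\xi(x)$ such that $\nu_{f_\xi}(x) = \nu_{E_\xi(x)}(x)$ for the set of finite perimeter $E_\xi(x) := \{f_\xi > t_\xi(x)\}$.

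Second, at $|\DIFF F|^s$-a.e. $x$, by \cite{ambrosio2018rigidity, bru2021constancy} the pmGH tangent to $(\XX,\dist,\mass,x)$ is unique and Euclidean of the essential dimension $n \leq N$, and by \cite{bru2019rectifiability} each $E_\xi(x)$ admits a halfspace $H_\xi(x)\subset\RR^n$ as blow-up at $x$, with outer unit normal $n_\xi(x) \in S^{n-1}$ corresponding to $\nu_{f_\xi}(x)$ through the canonical identification, at $x$, between $L^0_\capa(T\XX)$ and the constant tangent vectors on $\RR^n$ (as in \cite{debin2019quasicontinuous, BGBV}). Performing a joint rescaling of $F$ dictated by the vectorial total variation $|\DIFF F|$, and exploiting compactness for $\BV$ functions under pmGH convergence, one produces a limit $\tilde F \in \BV_\mathrm{loc}(\RR^n)^k$ whose scalar components are the blow-ups of the $f_\xi$ for $\xi \in \Xi$ and whose polar vector $\nu_{\tilde F}$ encodes, via the correspondence above, the collection $\{n_\xi(x)\}_{\xi \in \Xi}$. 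The classical Rank-One Theorem of Alberti applied to $\tilde F$ on $\RR^n$ then produces a single unit vector $n(x) \in S^{n-1}$ with $n_\xi(x) \in \RR\,n(x)$ for every $\xi \in \Xi$. Transferring back through the module isomorphism and invoking the density of $\Xi$ in $S^{k-1}$ together with a Borel selection argument yields $\omega \in L^0_\capa(T\XX)$ and $\lambda_1, \dots, \lambda_k \in L^0(\capa)$ such that $\nu_{F,i} = \lambda_i \omega$ for $|\DIFF F|^s$-a.e. $x$, which is exactly the rank-one condition of \Cref{def:InTheSenseOf}.

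The main obstacle is the joint blow-up in the second step: while the blow-up analysis for a single set of finite perimeter is well established in \cite{bru2019rectifiability}, executing a \emph{simultaneous} blow-up of the vector-valued map $F$ (as opposed to of each scalar level set $E_\xi(x)$ independently) on a $|\DIFF F|^s$-full-measure set of base points, and verifying that the limiting polar vector $\nu_{\tilde F}$ is compatible with the countable collection of scalar blow-up normals $\{n_\xi(x)\}_{\xi\in\Xi}$, is delicate. This calls for the correct common rescaling factor provided by the vectorial total variation $|\DIFF F|$, suitable lower density estimates for $|\DIFF F|^s$, and a careful use of the capacitary module identifications of \cite{debin2019quasicontinuous, BGBV} along the blow-up sequence; once this is in place, Alberti's Euclidean theorem closes the argument essentially for free.
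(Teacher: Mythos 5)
There is a genuine gap in the second step, and it is the central one. You apply the Euclidean Rank-One Theorem to the blow-up $\tilde F$ and conclude that the constant matrix of normals it encodes has rank one, but Alberti's theorem only constrains $\nu_{\tilde F}$ on the \emph{singular} part $|\DIFF\tilde F|^s$ of the blow-up, and there is no reason for $|\DIFF\tilde F|^s$ to be non-trivial. At a Cantor point $x$, the blow-up (with the normalization dictated by $|\DIFF F|$) produces $\DIFF\tilde F = A\,\mu$ where $A$ is the constant matrix encoding $\{n_\xi(x)\}_\xi$ and $\mu$ is a tangent measure of $|\DIFF F|$ at $x$. Tangent measures of a singular measure can be absolutely continuous with respect to $\mathcal{L}^n$: there are classical examples (e.g.\ due to Preiss) of purely singular measures whose tangent measures at almost every point are multiples of Lebesgue. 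If $\mu\ll\mathcal{L}^n$, then $|\DIFF\tilde F|^s=0$ and the Euclidean Rank-One Theorem applied to $\tilde F$ is vacuous; it imposes no constraint on $A$. The singularity of $|\DIFF F|$ at $x$ does not survive the blow-up, and this cannot be fixed by tuning the rescaling or invoking lower density estimates: it is precisely the obstruction that makes every known proof of the Rank-One Theorem (Alberti's, De Philippis--Rindler's, Massaccesi--Vittone's) non-trivial. The authors say exactly this in the introduction: they ``could not work directly arguing with infinitesimal considerations in the $\RCD$ case (i.e., using directly difference of blow-ups) but \ldots had to argue locally and then infinitesimally in a Euclidean space.''

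The paper's route around this obstruction, following Massaccesi--Vittone, is to pass to the subgraphs $\GG_f\subseteq\XX\times\RR$ and exploit the rigidity of blow-ups of sets of locally finite perimeter (halfspaces, by \cite{ambrosio2018rigidity,bru2019rectifiability}) --- a rigidity available for characteristic functions but not for general $\BV$ functions, so there is no analogous escape for your $\tilde F$. The Cantor-part analysis then rests on the transversality statement of \Cref{mainlemma}, which localizes via bi-Lipschitz $\delta$-splitting charts and works with rectifiable subsets of $\RR^{n+2}$ via the area formula, combined with \Cref{coincide} and \Cref{thmcantor} to relate the subgraph normal to $\nu_f$ on the Cantor part. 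Your scalar reduction via $f_\xi = \xi\cdot F$ and the coarea formula is a reasonable counterpart to the paper's reduction to pairs of components in \Cref{rank1lem0} and \Cref{rank1lem1}, and is not where the argument breaks; the failure is squarely in the expectation that Alberti's theorem ``closes the argument essentially for free'' once a blow-up exists.
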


As far as we know, apart from the result of Don--Massaccesi--Vittone \cite{DonMasVit17} that holds for a special class of Carnot groups, \Cref{thm:RankOne} is one of the first instances of the validity of the Rank-One Theorem in a large class of metric measure spaces. 
\smallskip 

We stress that, even if the proof of \cite{DonMasVit17} covers a large class of Carnot groups, some distinguished examples are still not covered. For example, as of today it is not known if the Rank-One Theorem holds for vector-valued $\mathrm{BV}$ functions in the first Heisenberg group $\mathbb H^1$.  We stress that our strategy for the proof of \Cref{thm:RankOne} seems not to be applicable to prove the Rank-One Theorem in $\mathbb H^1$. Indeed, we are fundamentally exploiting the fact that we have good bi-Lipschitz charts on the space valued in the tangents. But, even if on $\mathbb H^1$ the boundary of a set of locally finite perimeter is intrinsic $C^1$-rectifiable, see \cite{FSSC01}, it is nowadays not known whether intrinsic $C^1$ surfaces can be almost everywhere covered by (bi)-Lipschitz images of their tangents, see \cite{DDFO20} for partial results in this direction.

{ We stress that our strategy cannot be easily adapted to prove Rank-One-type results for $\mathrm{BV}$ functions in $\RCD(K,\infty)$ spaces. In fact, our proof works mainly by blow-up. Since $\RCD(K,\infty)$ spaces might be not locally doubling, we do not have a good notion of Gromov--Hausdorff tangent at their points. In particular, it would even be challenging to understand whether the results in \cite{AmbrosioAhlfors,amb01,ambrosio2018rigidity, bru2019rectifiability, bru2021constancy}, which are the starting point of our analysis, can be adapted to the $\RCD(K,\infty)$ setting.}

Moreover, we point out that very recently Lahti proposed an alternative formulation of Alberti's Rank-One Theorem that could make sense in arbitrary metric measure spaces, we refer to \cite[Section 6]{LahtiAlberti}.

\subsection{Outline of the proof}
Our proof is inspired by the one by Massaccesi--Vittone \cite{MassaccesiVittone19}. First, given $F\in \mathrm{BV}(\XX,\dist,\mass)^k$, the singular part of the total variation $|\DIFF F|^s$ can be written as the sum of the jump part $|\DIFF F|^j$, which is concentrated on the set where the approximate lower and upper limits of the components of $F$ do not coincide, and the Cantor part $|\DIFF F|^c$, see \Cref{decomptv}. As a consequence of a result by the second named author and Gigli, see the forthcoming  \Cref{rank1lem1}, it is enough to show the Rank-One Theorem only on the Cantor part. 

We stress that in the proofs of the main results in \Cref{sec:Main} we shall always restrict to sets where the Cantor part of the components of $F$ is concentrated, and where we have good density and blow-up properties: we collect all of them in the technical \Cref{prop:def_set_C_f}.
\smallskip

The core and the most technically demanding part of the proof is \Cref{mainlemma}, in which we adapt to our setting the main Lemma of the short proof of the Rank-One Theorem by Massaccesi--Vittone \cite{MassaccesiVittone19}. In fact, Massaccesi and Vittone prove that given two $C^1$-hypersurfaces $\Sigma_1,\Sigma_2$ in $\mathbb R^{n}\times\mathbb R$, the set $T$ of points $p\in\Sigma_1$ such that there exists $q\in \Sigma_2$ for which $p$ and $q$ have the same first $n$ coordinates, $\nu_{\Sigma_1}(p)_{n+1}=\nu_{\Sigma_2}(q)_{n+1}=0$, and $\nu_{\Sigma_1}(p)\neq \pm \nu_{\Sigma_2}(q)$, is $\mathcal{H}^n$-negligible. Clearly the latter statement makes no sense in our non smooth setting, but what one really needs for the proof of the Rank-One Theorem is \Cref{mainlemma}. 
\smallskip

Following the strategy in the Proof of the Lemma of \cite{MassaccesiVittone19}, one writes $T$ as the projection of a set $\widetilde T \subset \mathbb R^{n}\times\mathbb R\times\mathbb R$ adding one fake coordinate, and proves that $T=\pi(\widetilde T)$ is $\mathcal{H}^n$-negligible by means of the area formula. In \Cref{mainlemma} we adapt the same strategy, compare with the definition of the set \eqref{eqn:SigmaFSigmag}. We prove the analogue of Massaccesi--Vittone Lemma substituting the hypersurfaces $\Sigma_i$'s with the (essential) boundaries of sets of the form $\mathcal{G}_f:=\{(x,t):t<f(x)\}$, where $f\in\mathrm{BV}(\XX,\dist,\mass)$. This is enough to implement in our setting their strategy.
However, to adapt the proof in \cite{MassaccesiVittone19, DonMasVit17} to our framework one faces non trivial technical difficulties. Indeed, the key ingredient used by Massaccesi--Vittone was a well-known transversality lemma: given two hypersurfaces in $\RR^{n+2}$, their intersection is locally an $n$-dimensional manifold provided that at every intersection point the given hypersurfaces meet transversally, i.e., have different tangent spaces. This result then extends to the case of the intersection of two $(n+1)$-rectifiable subsets of $\RR^{n+2}$: their intersection is $\sigma$-finite with respect to $\HH^{n}$ provided that the transversality condition is satisfied and that one discards a set that turns out to be negligible when proving the Rank-One property.

It is clear that one needs also information on codimension $2$ objects (namely, the intersection of two transverse hypersurfaces) and this kind of information is unavailable on $\RCD$ spaces. Therefore, adopting directly this approach is not possible in our framework.
Our strategy is then to translate part of the problem from the $\RCD$ setting to the Euclidean setting (which allows us to use transversality results as above), via the use of suitable $\delta$-splitting maps that play the role of charts, leveraging heavily on the results of \cite{bru2019rectifiability,bru2021constancy}. The fact that the domains of these charts are not open sets  is a source of difficulty and is morally the burden of the proof of Lemma \ref{mainlemma}. In other words, we could not work directly arguing with infinitesimal considerations in the $\RCD$ case (i.e., using directly difference of blow-ups) but we had to argue locally and then infinitesimally in a Euclidean space.
\smallskip

As an important part of the proof, to manipulate the vector that is normal to the boundary of the set $\mathcal{G}_f$, we need to introduce a family of charts in which we write those normals in coordinates, see \Cref{real_normal}. We construct these charts in \Cref{defn:GoodCollection}, and we call them a \emph{good collection of splitting maps}. The latter definition is based on the following fact, which is proved in \Cref{lemsplitting}. Given an $\mathrm{RCD}$ space of essential dimension $n$, we prove that for every $\eta>0$ small enough we can find a sequence of $n$-tuples of harmonic maps $\{u_{k,\eta}\}_{k\in\mathbb N}$ defined on balls, and a disjointed family of Borel sets $\{D_{k,\eta}\}_{k\in\mathbb N}$ such that for every $x\in D_{k,\eta}$, $u_{k,\eta}$ is an $\eta$-splitting map on $B_{r_k}(x)$, and the total variation of every $\mathrm{BV}_{\mathrm{loc}}$ function is concentrated on $\bigsqcup_{k\in\mathbb N}D_{k,\eta}$.
\smallskip

The other two ingredients to adapt in our setting the strategy of \cite{MassaccesiVittone19} are \Cref{coincide} and \Cref{thmcantor}. In the first we prove that, given $f\in\mathrm{BV}$, restricting to the good set on the Cantor part as in \Cref{prop:def_set_C_f}, we have that (in coordinates) the density $\nu_f(x)$ is equal to the first coordinates of the normal $\nu_{\mathcal{G}_f}(x,f(x))$, where $\mathcal{G}_f:=\{(x,t):t<f(x)\}$. In the second we prove that, restricting to the good set on the Cantor part as in \Cref{prop:def_set_C_f}, the $(n+1)$-th coordinate of the normal $\nu_{\mathcal{G}_f}$ is almost everywhere zero. This is essentially due to the fact that we are on the singular part of $\DIFF f$.

Again, not having at our disposal a linear structure is source of difficulty, as the distributional derivative has no more a direction-wise meaning, in the sense that it is not possible to define the distributional derivative of a function of bounded variation with respect to a given direction without giving up the differential meaning of this object. To overcome this difficulty, we employ blow-up arguments and density arguments.

\smallskip

Finally, putting together \Cref{mainlemma}, \Cref{coincide}, and \Cref{thmcantor}, we conclude that given two $\mathrm{BV}$ functions $f,g$, we have that $\nu_f=\pm\nu_g$ holds $|\DIFF f|\wedge |\DIFF g|$-almost everywhere on the intersection of the good sets $C_f\cap C_g$ defined in \Cref{prop:def_set_C_f}, see \Cref{rank1lem0}. {Here $\wedge$ stands for the minimum between the two measures, i.e.\ the biggest measure $\phi$ such that $\phi\le \mu$ and $\phi\le \nu$.}
This, together with the same property on the jump part, see \Cref{rank1lem1}, concludes the proof. 
\smallskip

We stress that along the way in \Cref{sec:Representation}, building on \cite{deng2020holder} (compare with \cite{ColdingNaber12} for the H\"older continuity property of tangents along geodesics in the Ricci-limit case), we improve a previous result of \cite{bru2021constancy} by showing that every $\mathrm{BV}$ function on an $\mathrm{RCD}$ space of essential dimension $n$ has total variation concentrated on the set $\mathcal{R}_n^*$ of $n$-regular points with positive and finite $n$-density, see \Cref{thm:const_dim_cod1}. We exploit the latter result to answer in the affirmative a conjecture proposed in \cite{Sem20} about the representation of the perimeter measure, see \Cref{thm:repr_per}. 
\smallskip 

In \Cref{sec:Appendix} we exploit the previously described result proved in \Cref{thm:const_dim_cod1}, together with the recently proved metric variant of Marstrand--Mattila rectifiability criterion \cite{BateMM}, to give an alternative and shorter proof of the $(n-1)$-rectifiability of the essential boundaries of sets of locally finite perimeter in $\mathrm{RCD}$ spaces with essential dimension $n$.
We believe that this result is of independent interest but we point out that it originated as a side remark due to the fact that we were interested to prove the Rank-One property in general $\RCD(K,N)$ spaces, without restricting ourselves to \emph{non-collapsed} $\RCD$ spaces. Indeed, the information that the perimeter measure and the $\HH^{n-1}$ measure restricted to the reduced boundary are mutually absolutely continuous (already known in the \emph{non-collapsed} case) is crucial in the proof of Lemma \ref{mainlemma}. Anyway, we point out that even if the proof presented in \Cref{sec:Appendix} is much shorter than the original one, it is heavily based on the ideas and techniques exploited in \cite{bru2019rectifiability, bru2021constancy}, i.e., looking at what happens at the space locally and infinitesimally by using well-behaved charts.

\subsection*{Structure of the paper}
In \Cref{sec:Preliminaries} we discuss the basic tools and notation that we shall use throughout the paper. 

In particular, in \Cref{sec:mms} we discuss the basic toolkit for metric measure spaces. We recall the definition of PI space, the notion of pointed measured Gromov--Hausdorff convergence and tangents, and the basic Sobolev and BV calculus in arbitrary metric measure spaces. 

In \Cref{sec:RCD} we recall basic structure results of $\mathrm{RCD}$ spaces, and the main important notions of Sobolev and $\mathrm{BV}$ calculus on $\mathrm{RCD}$ spaces. We further recall the notion of \emph{good coordinates} introduced in \cite{bru2021constancy}, the notion of splitting maps, and finally we prove \Cref{lemsplitting} that leads to the notion of \emph{good collection of splitting maps}, see \Cref{defn:GoodCollection}.
\smallskip

In \Cref{sec:Main} we prove the main results of this paper, and in particular we give the proof of the Rank-One Theorem in \Cref{thm:RankOne}. 

In particular in \Cref{sec:Representation}, building on \cite{deng2020holder}, we prove \Cref{thm:const_dim_cod1} described above.

In \Cref{sec:Auxiliary} we prove some auxiliary results toward the proof of the Rank-One Theorem, namely \Cref{prop:def_set_C_f}, \Cref{coincide}, and \Cref{thmcantor}.

Finally, in \Cref{sec:Final} we exploit the previous results, together with the main result in \Cref{mainlemma}, which is the adaptation to our setting of the Lemma of \cite{MassaccesiVittone19}, to show the Rank-One property on the Cantor part, see \Cref{rank1lem0}. This is enough to conclude the proof of the Rank-One Theorem by exploiting also \Cref{rank1lem1}, which is the Rank-One property on the jump part.

In \Cref{sec:Appendix} we give the alternative proof of the rectifiability of the essential boundaries of sets of locally finite perimeter in $\mathrm{RCD}$ spaces that we described above.

\subsection*{Acknowledgments}
The first author was partially supported by the Swiss National Science Foundation
(grant 200021-204501 `\emph{Regularity of sub-Riemannian geodesics and
applications}'), by the European Research Council (ERC Starting Grant 713998 GeoMeG
`\emph{Geometry of Metric Groups}'), and the PRIN Project 2017TEXA3H.
The third author is supported by the Balzan project led by Luigi Ambrosio.

The authors wish to thank Sebastiano Don and Daniele Semola for precious comments on a draft of this paper. {We warmly thank the anonymous referees for many valuable comments that improved the readability of the paper.}
\section{Preliminaries}\label{sec:Preliminaries}
{
We often need to bound quantities in terms of constants that depend only on geometric parameters but whose precise value is not important. For this reason, we denote with $C_{a,b,\dots}$ a constant depending only on the parameters $a,b,\dots$, whose value might change from line to line or even within the same line. 
}\medskip

Given \(n\in\NN\) and non-empty sets \(\XX_1,\dots,\XX_n\), for any \(i=1,\ldots,n\) we will always tacitly denote by \(\pi^i\)
the projection of the Cartesian product \(\XX_1\times\dots\times\XX_n\) onto its \(i^{\rm th}\) factor:
\[
\pi^i\colon\XX_1\times\dots\times\XX_n\to\XX_i,\quad(x_1,\dots,x_n)\mapsto x_i.
\]
Moreover, we denote by $\pi^{i,j}$ the projection of the Cartesian product \(\XX_1\times\dots\times\XX_n\) onto its $(i,j)$ factor, namely 
\[
\pi^{i,j}\colon\XX_1\times\dots\times\XX_n\to\XX_i\times\XX_j,\quad(x_1,\dots,x_n)\mapsto (x_i,x_j).
\]
Finally, we denote by $\tau$ the inversion map on the last two factors on a product of three factors, namely 
\begin{equation}\label{eqn:Tau}
\tau:\XX_1\times\XX_2\times\XX_3\rightarrow\XX_1\times\XX_3\times\XX_2,\quad (x_1,x_2,x_3)\mapsto (x_1,x_3,x_2).
\end{equation}
\subsection{Metric measure spaces}\label{sec:mms}
For the purposes of this paper, a \emph{metric measure space} is a triple \((\XX,\dist,\mass)\), where \((\XX,\dist)\) is a complete and separable metric space,
while \(\mass\ge 0\) is a boundedly-finite Borel measure on \(\XX\). By a \emph{pointed metric measure space} \((\XX,\dist,\mass,p)\) we mean a metric measure
space \((\XX,\dist,\mass)\) together with a distinguished point \(p\in{\rm spt}(\mass)\), where
\[
{\rm spt}(\mass)\coloneqq\big\{x\in\XX\;\big|\;\mass(B_r(x))>0,\text{ for every }r>0\big\}
\]
stands for the \emph{support} of \(\mass\). Given an open set \(\Omega\subseteq\XX\), we denote by \({\rm LIP}_{\mathrm{loc}}(\Omega)\) and
\({\rm LIP}(\Omega)\) the spaces of all locally Lipschitz and Lipschitz functions on \(\Omega\), respectively, while we set
\[
{\rm LIP}_{\mathrm{bs}}(\Omega)\coloneqq\big\{f\in{\rm LIP}(\Omega)\;\big|\;{\rm spt}(f)\text{ is bounded and }\dist(\partial\Omega,{\rm spt}(f))>0\big\}.
\]
Given any \(f\in{\rm LIP}_{\mathrm{loc}}(\Omega)\), its \emph{local Lipschitz constant} \(\lip f\coloneqq\Omega\to[0,+\infty)\) is defined as
\[
\lip f(x)\coloneqq\left\{\begin{array}{ll}
\varlimsup_{y\to x}\frac{|f(x)-f(y)|}{\dist(x,y)},\\
0,
\end{array}\quad\begin{array}{ll}
\text{ if }x\in\Omega\text{ is an accumulation point},\\
\text{ if }x\in\Omega\text{ is an isolated point}.
\end{array}\right.
\]
For any \(k\in[0,+\infty)\) and \(\delta>0\), we will denote by \(\mathcal H^k_\delta\) and \(\mathcal H^k\) the \emph{\(k\)-dimensional
Hausdorff \(\delta\)-premeasure} and the \emph{\(k\)-dimensional Hausdorff measure} on \((\XX,\dist)\), respectively. Namely,
\[\begin{split}
\mathcal H^k_\delta(E)&\coloneqq\inf\bigg\{\sum_{i=1}^\infty\omega_k\bigg(\frac{{\rm diam}(E_i)}{2}\bigg)^k\;\bigg|\;
E\subseteq\bigcup_{i\in\NN}E_i\subseteq\XX,\;\sup_{i\in\NN}{\rm diam}(E_i)<\delta\bigg\},\\
\mathcal H^k(E)&\coloneqq\lim_{\delta\searrow 0}\mathcal H^k_\delta(E)=\sup_{\delta>0}\mathcal H^k_\delta(E)
\end{split}\]
for every set \(E\subseteq\XX\), where \(\omega_k\coloneqq\frac{\pi^{k/2}}{\Gamma(1+k/2)}\) and \(\Gamma\) stands for the Euler's gamma function. Notice that, for every $n\in\mathbb N$, $\omega_n$ is the Euclidean volume of the unit ball in $\mathbb R^n$.
\subsubsection{PI spaces}
Throughout the whole paper, we will work in the setting of PI spaces.
We say that a metric measure space \((\XX,\dist,\mass)\) is \emph{uniformly locally doubling} provided for every radius \(R>0\)
there exists a constant \(C_D>0\) such that
\[
\mass(B_{2r}(x))\leq C_D\mass(B_r(x)),\quad\text{ for every }x\in\XX\text{ and }r\in(0,R).
\]
Moreover, we say that \((\XX,\dist,\mass)\) supports a \emph{weak local \((1,1)\)-Poincar\'{e} inequality} provided there exists
a constant \(\lambda\ge 1\) for which the following property holds: given any \(R>0\), there exists a constant \(C_P>0\) such that
for any function \(f\in{\rm LIP}_{\mathrm{loc}}(\XX)\) it holds that
\[
\fint_{B_r(x)}\bigg|f-\fint_{B_r(x)}f\,\diff\mass\bigg|\,\diff\mass\leq C_P r\fint_{B_{\lambda r}(x)}\lip f\,\diff\mass,
\quad\text{ for every }x\in\XX\text{ and }r\in(0,R).
\]
\begin{defn}[PI space]
We say that a metric measure space is a \emph{PI space} provided it is uniformly locally doubling
and it supports a weak local \((1,1)\)-Poincar\'{e} inequality.
\end{defn}
In the context of PI spaces, we will consider the \emph{codimension-\(1\) Hausdorff \(\delta\)-premeasure} \(\mathcal H^h_\delta\)
(for any \(\delta>0\)) and the \emph{codimension-\(1\) Hausdorff measure} \(\mathcal H^h\), which are given by
\[\begin{split}
\mathcal H^h_\delta(E)&\coloneqq\inf\bigg\{\sum_{i=1}^\infty\frac{\mass(B_{r_i}(x_i))}{{\rm diam}(B_{r_i}(x_i))}\;\bigg|\;
E\subseteq\bigcup_{i\in\NN}B_{r_i}(x_i),\;\sup_{i\in\NN}{\rm diam}(B_{r_i}(x_i))<\delta\bigg\},\\
\mathcal H^h(E)&\coloneqq\lim_{\delta\searrow 0}\mathcal H^h_\delta(E)=\sup_{\delta>0}\mathcal H^h_\delta(E),
\end{split}\]
respectively, for every set \(E\subseteq\XX\).
\subsubsection{Measured Gromov-Hausdorff convergence and tangents}
Let us recall the notion of \emph{pointed measured Gromov-Hausdorff} convergence (see e.g.\ \cite{GMS15}). We say that a pointed
metric measure space \((\XX,\dist,\mass,p)\) is \emph{normalised} provided \(C_p^1(\mass)=1\), where we set
\[
C_p^r=C_p^r(\mass)\coloneqq\int_{B_r(p)}\left(1-\frac{\dist(\cdot,p)}{r}\right)\,\diff\mass,\quad\text{ for every }r>0.
\]
If \((\XX,\dist,\mass,p)\) is any pointed metric measure space, then \((\XX,\dist,\mass_p^1,p)\) is normalised, where
\[
\mass_p^r\coloneqq C_p^r(\mass)^{-1}\mass,\quad\text{ for every }r>0.
\]
Let \(C\colon(0,+\infty)\to(0,+\infty)\) be a given non-decreasing function. Then we denote by \(\mathbb X_{C(\cdot)}\) the family
of all the equivalence classes of normalised pointed metric measure spaces that are \(C(\cdot)\)-doubling, in the sense that
\[
\mass(B_{2r}(x))\leq C(R)\,\mass(B_r(x)),\quad\text{ for every }x\in\XX\text{ and }0<r\leq R.
\]
The equivalence classes are intended with respect to the following equivalence relation: we identify two pointed metric
measure spaces \((\XX_1,\dist_1,\mass_1,p_1)\), \((\XX_2,\dist_2,\mass_2,p_2)\) provided there exists a bijective isometry
\(\varphi\colon{\rm spt}(\mass_1)\to{\rm spt}(\mass_2)\) such that \(\varphi(p_1)=p_2\) and \(\varphi_*\mass_1=\mass_2\).
\begin{defn}[Pointed measured Gromov-Hausdorff]
Let \(C\colon(0,+\infty)\to(0,+\infty)\) be non-decreasing. Let \((\XX,\dist,\mass,p),(\XX_i,\dist_i,\mass_i,p_i)\in\mathbb X_{C(\cdot)}\)
for \(i\in\NN\) be given. Then we say that \((\XX_i,\dist_i,\mass_i,p_i)\to(\XX,\dist,\mass,p)\) in the \emph{pointed measured Gromov-Hausdorff sense}
(briefly, in the \emph{pmGH sense}) provided there exist a proper metric space \((\ZZ,\dist_\ZZ)\) and isometric embeddings
\(\iota\colon\XX\to\ZZ\) and \(\iota_i\colon\XX_i\to\ZZ\) for \(i\in\NN\) such that \(\iota_i(p_i)\to\iota(p)\) and
\((\iota_i)_*\mass_i\rightharpoonup\iota_*\mass\) in duality with \(C_{bs}(\ZZ)\), meaning that \(\int f\circ\iota_i\,\diff\mass_i
\to\int f\circ\iota\,\diff\mass\) for every \(f\in C_{\rm bs}(\ZZ)\). The space \(\ZZ\) is called a \emph{realisation} of the pmGH convergence
\((\XX_i,\dist_i,\mass_i,p_i)\to(\XX,\dist,\mass,p)\).
\end{defn}
For brevity, we will identify \((\iota_i)_*\mass_i\) with \(\mass_i\) itself.
It is possible to construct a distance \(\dist_{\rm pmGH}\) on \(\mathbb X_{C(\cdot)}\) whose converging sequences are exactly those converging
in the pointed measured Gromov-Hausdorff sense. Moreover, the metric space \((\mathbb X_{C(\cdot)},\dist_{\rm pmGH})\) is compact.
\begin{defn}[pmGH tangent]
Let \(C\colon(0,+\infty)\to(0,+\infty)\) be non-decreasing. Then
\[
{\rm Tan}_p(\XX,\dist,\mass)\coloneqq\bigg\{(\YY,\dist_\YY,\mass_\YY,q)\in{\mathbb X}_{C(\cdot)}\;\bigg|\;
\exists\,r_i\searrow 0:\,(\XX,r_i^{-1}\dist,\mass_p^{r_i},p)\overset{\rm pmGH}\longrightarrow(\YY,\dist_\YY,\mass_\YY,q)\bigg\}.
\]
\end{defn}
Notice that \((\XX,r^{-1}\dist,\mass_p^r,p)\in\mathbb X_{C(\cdot)}\) holds for every \((\XX,\dist,\mass,p)\in\mathbb X_{C(\cdot)}\) and \(r\in(0,1)\),
thus accordingly the family \({\rm Tan}_p(\XX,\dist,\mass)\) is (well-defined and) non-empty.
\begin{defn}[Regular set]
Let \(n\in\NN\) be given. Let \(C\colon(0,+\infty)\to(0,+\infty)\) be any non-decreasing function such that
\((\RR^n,\dist_e,\underline{\mathcal L}^n,0)\in\mathbb X_{C(\cdot)}\), where \(\dist_e\) stands for the Euclidean distance
\(\dist_e(x,y)\coloneqq|x-y|\) on \(\RR^n\), while \(\underline{\mathcal L}^n\)
is the normalised measure
\((\mathcal L^n)_0^1=\frac{n+1}{\omega_n}\mathcal L^n\). Then the set of \emph{\(n\)-regular points} of a given element
\((\XX,\dist,\mass,p)\in\mathbb X_{C(\cdot)}\) is defined as
\[
\mathcal R_n=\mathcal R_n(\XX)\coloneqq\Big\{x\in\XX\;\Big|\;{\rm Tan}_x(\XX,\dist,\mass)=\big\{(\RR^n,\dist_e,\underline{\mathcal L}^n,0)\big\}\Big\}.
\]
\end{defn}
\begin{rem}\label{RnBorel}
We point out that the set \(\mathcal R_n(\XX)\) of \(n\)-regular points is Borel measurable. To check it, define
\(\phi\colon\XX\to[0,+\infty)\) as \(\phi(x)\coloneqq\varlimsup_{r\searrow 0}\dist_{\rm pmGH}\big((\XX,r^{-1}\dist,\mass_x^r,x),
(\RR^n,\dist_e,\underline{\mathcal L}^n,0)\big)\). One can readily verify that \((0,1)\ni r\mapsto(\XX,r^{-1}\dist,\mass_x^r,x)\in\mathbb X_{C(\cdot)}\)
is \(\dist_{\rm pmGH}\)-continuous for any given \(x\in\XX\), whence it follows that
\begin{equation}\label{eq:formula_dist_tg}
\phi(x)=\inf_{k\in\NN}\sup_{q\in\QQ\cap(0,1/k)}\dist_{\rm pmGH}\big((\XX,q^{-1}\dist,\mass_x^q,x),(\RR^n,\dist_e,\underline{\mathcal L}^n,0)\big),
\quad\text{ for every }x\in\XX.
\end{equation}
Since \(\XX\ni x\mapsto(\XX,r^{-1}\dist,\mass_x^r,x)\in\mathbb X_{C(\cdot)}\) is \(\dist_{\rm pmGH}\)-continuous for any given \(r\in(0,1)\),
we deduce that \(\XX\ni x\mapsto\dist_{\rm pmGH}\big((\XX,q^{-1}\dist,\mass_x^q,x),(\RR^n,\dist_e,\underline{\mathcal L}^n,0)\big)\) is a continuous
function for any \(q\in\QQ\cap(0,1)\). Consequently, \eqref{eq:formula_dist_tg} ensures that \(\mathcal R_n(\XX)=\{x\in\XX\,:\,\phi(x)=0\}\)
is a Borel set (in fact, a countable intersection of \(F_\sigma\) sets), as we claimed.
\end{rem}
\begin{defn}[Convergences along pmGH converging sequences]
Let \((\XX,\dist,\mass,p)\in\mathbb X_{C(\cdot)}\) and \((\XX_i,\dist_i,\mass_i,p_i)\in\mathbb X_{C(\cdot)}\) for \(i\in\NN\)
satisfy \((\XX_i,\dist_i,\mass_i,p_i)\to(\XX,\dist,\mass,p)\) in the pmGH sense, with realisation \(\ZZ\). Then we give the following
definitions:
\begin{itemize}
\item[\(\rm i)\)]
Let \(f_i\colon\XX_i\to\RR\) for \(i\in\NN\) and \(f\colon\XX\to\RR\) be given functions. Then we say that \(f_i\)
\emph{uniformly converges} to \(f\) provided for any \(\varepsilon>0\) there exists \(\delta>0\) such that \(|f_i(x_i)-f(x)|\leq\varepsilon\)
holds for every \(i\geq\delta^{-1}\) and \(x_i\in\XX_i\), \(x\in\XX\) with \(\dist_\ZZ(x_i,x)\leq\delta\).
\item[\(\rm ii)\)] Let \(f_i\colon\XX_i\to\RR\) for \(i\in\NN\) and \(f\colon\XX\to\RR\) be given functions. Then we say that \(f_i\)
\emph{locally uniformly converges} to \(f\) provided for any $R>0$, $f_{i|B_R(p_i)}$ uniformly converges to $f_{| B_R(p)}$.
\item[\(\rm iii)\)] Let \(E_i\subseteq\XX_i\) for \(i\in\NN\) and \(E\subseteq\XX\) be given Borel sets. Suppose that \(\mass_i(E_i)<+\infty\)
for every \(i\in\NN\) and \(\mass(E)<+\infty\). Then we say that \(E_i\to E\) \emph{(strongly) in \(L^1\)} provided \(\mass_i(E_i)\to\mass(E)\)
and \(\mass_i\llcorner E_i\rightharpoonup\mass\llcorner E\) in duality with \(C_{\rm bs}(\ZZ)\).
\item[\(\rm iv)\)]  Let \(E_i\subseteq\XX_i\) for \(i\in\NN\) and \(E\subseteq\XX\) be given Borel sets. Then we say that \(E_i\to E\)
\emph{(strongly) in \(L^1_{\rm loc}\)} provided \(E_i\cap B_R(p_i)\to E\cap B_R(p)\) in \(L^1\) for every \(R>0\).
\end{itemize}
\end{defn}
\subsubsection{Sobolev calculus}
Given a metric measure space \((\XX,\dist,\mass)\), we define the \emph{Sobolev space} \(W^{1,2}(\XX)\) as the set of all
functions \(f\in L^2(\mass)\) for which there exists \((f_n)_{n\in\NN}\subseteq{\rm LIP}_{\rm bs}(\XX)\) such that \(f_n\to f\)
in \(L^2(\mass)\) and \((\lip f_n)_{n\in\NN}\) is a bounded sequence in \(L^2(\mass)\). Then \(W^{1,2}(\XX)\) becomes a
Banach space if endowed with the following norm:
\[
\|f\|_{W^{1,2}(\XX)}\coloneqq\bigg(\int|f|^2\,\diff\mass+\inf_{(f_n)_n}\varliminf_{n\to\infty}\int\lip^2 f_n\,\diff\mass\bigg)^{1/2},
\quad\text{ for every }f\in W^{1,2}(\XX),
\]
where the infimum is taken among all those sequences \((f_n)_{n\in\NN}\subseteq{\rm LIP}_{\rm bs}(\XX)\) such that \(f_n\to f\) in \(L^2(\mass)\) and
\((\lip f_n)_{n\in\NN}\) is bounded in \(L^2(\mass)\). Given any function \(f\in W^{1,2}(\XX)\), there exists a unique element \(|\DIFF f|\in L^2(\mass)\),
called the \emph{minimal relaxed slope} of \(f\), such that the Sobolev norm of \(f\) can be expressed as
\(\|f\|_{W^{1,2}(\XX)}^2=\|f\|_{L^2(\mass)}^2+\||\DIFF f|\|_{L^2(\mass)}^2\). Moreover, there exists a sequence
\((f_n)_{n\in\NN}\subseteq{\rm LIP}_{\rm bs}(\XX)\) such that \(f_n\to f\) and \(\lip f_n\to|\DIFF f|\) in \(L^2(\mass)\).
This notion of Sobolev space, proposed in \cite{AmbrosioGigliSavare11-3}, is an equivalent reformulation of the one
introduced in \cite{Cheeger00}. See \cite{AmbrosioGigliSavare11-3} for the equivalence between these two and other approaches.

The \emph{Sobolev capacity} is the set-function on \(\XX\) defined as follows:
\begin{equation}\label{eqn:Capacity}
{\rm Cap}(E)\coloneqq\inf_{f}\|f\|_{W^{1,2}(\XX)}^2,\quad\text{ for every set }E\subseteq\XX;
\end{equation}
where the infimum is taken among all \(f\in W^{1,2}(\XX)\) such that \(f\geq 1\) holds \(\mass\)-a.e.\ on some open neighbourhood
of \(E\). Here we adopt the convention that \({\rm Cap}(E)\coloneqq+\infty\) whenever no such \(f\) exists. It holds that \(\rm Cap\)
is a submodular outer measure on \(\XX\), which is finite on bounded sets and satisfies \(\mass(E)\leq{\rm Cap}(E)\) for every \(E\subseteq\XX\) Borel.
\medskip

We shall also work with \emph{local Sobolev spaces}, whose definition we are going to recall. Fix an open set \(\Omega\subseteq\XX\).
Then we define \(W^{1,2}_{\rm loc}(\Omega)\) as the space of all functions \(f\in L^2_{\rm loc}(\Omega,\mass)\) such that
\(\eta f\in W^{1,2}(\XX)\) holds for every \(\eta\in{\rm LIP}_{\rm bs}(\Omega)\). Since the minimal relaxed slope is a local object,
meaning that for any choice of \(f_1,f_2\in W^{1,2}(\XX)\) we have that
\[
|\DIFF f_1|=|\DIFF f_2|,\quad\text{ holds }\mass\text{-a.e.\ on }\{f_1=f_2\},
\]
it makes sense to associate to any \(f\in W^{1,2}_{\rm loc}(\Omega)\) the function \(|\DIFF f|\in L^2_{\rm loc}(\Omega,\mass)\) given by
\[
|\DIFF f|\coloneqq|\DIFF(\eta f)|,\quad\mass\text{-a.e.\ on }\{\eta=1\},
\]
for every \(\eta\in{\rm LIP}_{\rm bs}(\Omega)\). The local Sobolev space \(W^{1,2}(\Omega)\) is defined as
\[
W^{1,2}(\Omega)\coloneqq\big\{f\in W^{1,2}_{\rm loc}(\Omega)\;\big|\;f,|\DIFF f|\in L^2(\mass)\big\}.
\]
Finally, we define \(W^{1,2}_0(\Omega)\) as the closure of \({\rm LIP}_{\rm bs}(\Omega)\) in \(W^{1,2}(\Omega)\).
\medskip

Following the terminology introduced in \cite{Gigli12}, we say that a given metric measure space \((\XX,\dist,\mass)\) is
\emph{infinitesimally Hilbertian} provided \(W^{1,2}(\XX)\) (and thus also \(W^{1,2}(\Omega)\) for any \(\Omega\subseteq\XX\) open)
is a Hilbert space. Under this assumption, the mapping
\[
W^{1,2}(\Omega)\times W^{1,2}(\Omega)\ni(f,g)\mapsto\nabla f\cdot\nabla g\coloneqq
\frac{|\DIFF(f+g)|^2-|\DIFF f|^2-|\DIFF g|^2}{2}\in L^1(\Omega,\mass)
\]
is bilinear and continuous. We say that a given function \(f\in W^{1,2}(\Omega)\) has a \emph{Laplacian}, briefly \(f\in D(\Delta,\Omega)\),
provided there exists a function \(\Delta f\in L^2(\Omega,\mass)\) such that
\begin{equation}\label{eq:def_Laplacian}
\int_\Omega\nabla f\cdot\nabla g\,\diff\mass=-\int_\Omega g\Delta f\,\diff\mass,\quad\text{ for every }g\in W^{1,2}_0(\Omega).
\end{equation}
No ambiguity may arise, since \(\Delta f\) is uniquely determined by \eqref{eq:def_Laplacian}. The set \(D(\Delta,\Omega)\) is a
linear subspace of \(W^{1,2}(\Omega)\) and the resulting operator \(\Delta\colon D(\Delta,\Omega)\to L^2(\Omega,\mass)\) is linear.
For the sake of brevity, we shorten \(D(\Delta,\XX)\) to \(D(\Delta)\). By a \emph{harmonic} function on \(\Omega\) we mean an
element \(f\in D(\Delta,\Omega)\) such that \(\Delta f=0\).
\subsubsection{BV calculus}
We begin by recalling the notions of function of bounded variation and of set of finite perimeter in the context of metric measure spaces,
following \cite{MIRANDA2003}.
\begin{defn}[Function of bounded variation]
Let $(\XX,\dist,\mass)$ be a metric measure space. Let $f\in L^1_{\mathrm{loc}}(\XX,\mass)$ be given. Then we define
\[
|Df|(\Omega) \coloneqq \inf\bigg\{\varliminf_{i\to\infty}\int_\Omega\lip f_i\,\diff\mass\;\bigg|\;\text{$(f_i)_{i\in\NN}\subseteq{\rm LIP}_{\rm loc}(\Omega),
\,f_i \to f $ in $L^1_{\mathrm{loc}}(\Omega,\mass)$}\bigg\},
\]
for any open set $\Omega\subseteq\XX$. We declare that a function \(f\in L^1_{\rm loc}(\XX,\mass)\) is of \emph{local bounded variation},
briefly \(f\in{\rm BV}_{\rm loc}(\XX)\), if \(|Df|(\Omega)<+\infty\) for every \(\Omega\subseteq\XX\) open bounded. In this case, it is well known that $|\DIFF f|$ extends to a locally finite measure on $\XX$. Moreover,
a function $f \in L^1(\XX,\mass)$ is said to belong to the space of \emph{functions of bounded variation}
${\rm BV}(\XX)={\rm BV}(\XX,\dist,\mass)$ if $|Df|(\XX)<+\infty$. 
\end{defn}
\begin{defn}[Set of finite perimeter]
Let \((\XX,\dist,\mass)\) be a metric measure space. Let \(E\subseteq\XX\) be a Borel set and \(\Omega\subseteq\XX\) an open set.
Then we define the \emph{perimeter} of $E$ in $\Omega$ as
\[
P(E,\Omega) \coloneqq \inf\bigg\{\varliminf_{i\to\infty}\int_\Omega \lip f_i\,\diff\mass\;\bigg|\;\text{$(f_i)_{i\in\NN}\subseteq{\rm LIP}_{\rm loc}(\Omega),
\,f_i \to \chi_E $ in $L^1_{\rm loc}(\Omega,\mass)$} \bigg\},
\]
in other words \(P(E,\Omega)\coloneqq|D\chi_E|(\Omega)\).
We say that $E$ has \emph{locally finite perimeter} if $P(E,\Omega)<+\infty$ for every \(\Omega\subseteq\XX\) open bounded.
Moreover, we say that $E$ has \emph{finite perimeter} if $P(E,\XX)<+\infty$, and we denote $P(E)\coloneqq P(E,\XX)$.
\end{defn}
Given a uniformly locally doubling space \((\XX,\dist,\mass)\) and a Borel set \(E\subseteq\XX\), we define the \emph{essential boundary} of \(E\) as
\[
\partial^*E\coloneqq\bigg\{x\in\XX\;\bigg|\;\varlimsup_{r\searrow 0}\frac{\mass(E\cap B_r(x))}{\mass(B_r(x))}>0,
\,\varlimsup_{r\searrow 0}\frac{\mass(E^c\cap B_r(x))}{\mass(B_r(x))}>0\bigg\}.
\]
Then \(\partial^*E\) is a Borel subset of the topological boundary \(\partial E\) of \(E\). Moreover, if \((\XX,\dist,\mass)\) is a PI space,
then \(P(E,\cdot)\) is concentrated on \(\partial^*E\) (see \cite[Theorem 5.3]{amb01}).
\begin{defn}[Precise representative]
Let $(\XX,\dist,\mass)$ be a metric measure space and let $f\colon\XX\to\RR$ be a Borel function.		
Then we define the \emph{approximate lower} and \emph{upper limits} as 
\begin{alignat*}{3}
	f^{\wedge}(x)&\defeq \apliminf_{y\rightarrow x} f(y)&&\defeq\sup&&\left\{t\in\bar{\RR}\;:\;\lim_{r\searrow 0} \frac{\mass(B_r(x)\cap\{f<t\})}{\mass(B_r(x))}=0\right\}, \\
	f^{\vee}(x)&\defeq \aplimsup_{y\rightarrow x} f(y)&&\defeq\inf&&\left\{t\in\bar{\RR}\;:\;\lim_{r\searrow 0} \frac{\mass(B_r(x)\cap\{f>t\})}{\mass(B_r(x))}=0\right\},
\end{alignat*}
for every \(x\in\XX\). Here we adopt the convention that $$\inf\emptyset=+\infty,\quad \text{and}\quad\sup\emptyset=-\infty.$$
Moreover, we define the \emph{precise representative} \(\bar f\colon\XX\to\bar\RR\) of \(f\) as
\[
\bar f(x)\coloneqq\frac{f^\wedge(x)+f^\vee(x)}{2},\quad\text{ for every }x\in\XX,
\]
where we adopt the convention that \(+\infty-\infty=0\).
\end{defn}
We define the \emph{jump set} \(J_f\subseteq\XX\) of the function \(f\) as the Borel set
\[
J_f\coloneqq\{x\in\XX\,:\,f^\wedge(x)<f^\vee(x)\}.
\]
It is well-known that if \((\XX,\dist,\mass)\) is a PI space and \(f\in\BV(\XX)\), then \(J_f\) is a countable union of
essential boundaries of sets of finite perimeter, so that in particular \(\mass(J_f)=0\).
See \cite[Proposition 5.2]{ambmirpal04}.
Moreover, as proved in \cite[Lemma 3.2]{kinkorshatuo}, it holds that
\begin{equation}\label{eq:bar_f_finite}
|\DIFF f|(\XX\setminus\XX_f)=0,\quad\text{ where }\XX_f\coloneqq\big\{x\in\XX\;\big|\;-\infty<f^\wedge(x)\leq f^\vee(x)<+\infty\big\},
\end{equation}
thus in particular \(-\infty<\bar f(x)<+\infty\) holds for \(|\DIFF f|\)-a.e.\ \(x\in\XX\).
\begin{defn}
Let $(\XX,\dist,\mass)$ be a metric measure space and let $f:\XX\rightarrow\RR$ be Borel.
Then we define the \emph{subgraph} of $f$ as the Borel set \(\GG_f\subseteq\XX\times\RR\) as
$$\GG_f\defeq\{(x,t)\in\XX\times\RR\,:\,t<f(x)\}.$$
\end{defn}
\begin{lem}\label{lem:char_G_f}
Let $(\XX,\dist,\mass)$ be a locally uniformly doubling metric measure space and let $f\colon\XX\rightarrow\RR$ be a Borel function. Then it holds that
\begin{align*}
({x},{t})\in\partial^*\GG_f&\quad\Rightarrow\quad t\in [f^\wedge(x),f^\vee(x)],\\
t\in (f^\wedge(x),f^\vee(x))&\quad\Rightarrow\quad(x,t)\in\partial^*\GG_f.
\end{align*}
In particular, if \(x\in\XX_f\setminus J_f\), then { it holds that \(\partial^*\GG_f\cap(\{x\}\times\RR)\subseteq\{(x,\bar f(x))\}\).}
\end{lem}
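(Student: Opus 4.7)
My plan is to unfold the definitions of $f^\wedge$, $f^\vee$, and $\partial^*$ directly, using the natural product metric-measure structure on $\XX \times \RR$ (the $\ell^\infty$-product metric together with $\mass \times \mathcal{L}^1$), so that $B_r((x,t)) = B_r(x) \times (t-r, t+r)$ and $(\mass \times \mathcal{L}^1)(B_r((x,t))) = 2r\,\mass(B_r(x))$. Both implications will then reduce to comparing $\GG_f \cap B_r((x,t))$ (or its complement) with a product slab of the form $(B_r(x) \cap \{f > s\}) \times I$ and invoking the density characterisations implicit in the definitions of the approximate limits. I will also use a monotonicity observation that keeps things clean: the set $\{s : \lim_r \mass(\{f<s\}\cap B_r(x))/\mass(B_r(x)) = 0\}$ is downward closed and its counterpart for $\{f>s\}$ is upward closed, so $s > f^\vee(x)$ forces the corresponding limit to vanish while $s < f^\vee(x)$ forces the limsup to be strictly positive (and symmetrically for $f^\wedge$).

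For the first implication I will argue by contradiction: given $(x,t) \in \partial^* \GG_f$ with, say, $t > f^\vee(x)$ (the case $t < f^\wedge(x)$ is symmetric, applied to $\GG_f^c$, whose essential boundary coincides with that of $\GG_f$), pick $s \in (f^\vee(x), t)$. For $0 < r < t - s$ any $(y, \tau) \in \GG_f \cap B_r((x,t))$ satisfies $s < t - r < \tau < f(y)$, hence $y \in \{f > s\} \cap B_r(x)$; Fubini yields
$$(\mass \times \mathcal{L}^1)(\GG_f \cap B_r((x,t))) \leq 2r\,\mass(\{f > s\} \cap B_r(x)),$$
and the monotonicity observation makes the right-hand side tend to $0$ after normalisation by $2r\,\mass(B_r(x))$, contradicting $(x,t) \in \partial^* \GG_f$.

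For the second implication, given $t \in (f^\wedge(x), f^\vee(x))$ I will choose $s_1, s_2$ with $f^\wedge(x) < s_1 < t < s_2 < f^\vee(x)$. For $0 < r < s_2 - t$ the pointwise inclusion $(\{f > s_2\} \cap B_r(x)) \times (t-r, t+r) \subseteq \GG_f \cap B_r((x,t))$ gives
$$\frac{(\mass \times \mathcal{L}^1)(\GG_f \cap B_r((x,t)))}{(\mass \times \mathcal{L}^1)(B_r((x,t)))} \geq \frac{\mass(\{f > s_2\} \cap B_r(x))}{\mass(B_r(x))},$$
whose limsup is strictly positive since $s_2 < f^\vee(x)$. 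A symmetric inclusion $(\{f < s_1\} \cap B_r(x)) \times (t-r, t+r) \subseteq \GG_f^c \cap B_r((x,t))$ (valid for $r < t - s_1$) handles the complementary density, so $(x,t) \in \partial^* \GG_f$.

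The "in particular" clause is then immediate from the first implication: if $x \in \XX_f \setminus J_f$ then $f^\wedge(x) = f^\vee(x) = \bar f(x) \in \RR$, which confines $\partial^* \GG_f \cap (\{x\} \times \RR)$ to $\{(x, \bar f(x))\}$. I do not anticipate a real obstacle here; the only mildly delicate step is deducing positivity of the limsup from a strict inequality with $f^\wedge$ or $f^\vee$, but this follows at once from the monotonicity of the auxiliary sets above.
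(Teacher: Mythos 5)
Your proof is correct and takes essentially the same route as the paper's: both reduce the density of $\GG_f$ at $(x,t)$ to the density of sublevel/superlevel sets of $f$ at $x$ via Fubini. The only cosmetic difference is that you work with the $\ell^\infty$ product metric (where $B_r((x,t))=B_r(x)\times(t-r,t+r)$ exactly, so no doubling constants appear), whereas the paper uses a ball $B_r(x,t)$ in a general product metric and sandwiches it between $B_{r/2}(x)\times B_{r/2}(t)$ and $B_r(x)\times B_r(t)$ at the cost of a constant $C_D$; since the essential boundary is invariant under bi-Lipschitz changes of metric in a doubling space, your choice is legitimate and slightly cleaner.
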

\begin{proof}
In the proof the constant $C_D$ may change from line to line and it only depends on the doubling constant at scale $R=1$.
We can compute, for $r\in (0,\epsilon)$, using Fubini's Theorem,
\begin{align*}
	\frac{(\mass\otimes\mathcal L^1)(B_r(x,t)\cap\GG_f)}{(\mass\otimes\mathcal L^1)(B_r(x,t))}&\le \frac{(\mass\otimes\mathcal L^1)\big((B_r(x)\times B_r(t))\cap\GG_f\big)}{(\mass\otimes\mathcal L^1)(B_{r/2}(x)\times B_{r/2}(t))}\\&\le C_D\frac{(\mass\otimes\mathcal L^1)(\{(y,t)\in B_r(x)\times B_r(t):t<f(y)\})}{r\mass (B_r(x))}
	\\&\le C_D\frac{\dashint_{t-r}^{t+r}\mass(\{y\in B_r(x):s<f(y)\})\dd{s}}{\mass(B_r(x))}\\&\le C_D\frac{\mass(B_r(x)\cap \{f>t-\epsilon\})}{\mass (B_r(x))}.
\end{align*}
Therefore, if $(x,t)\in\partial^*\GG_f$, then $t\le f^{\vee}(x)$. Similarly, we can show that if $r\in (0,\epsilon)$,
$$\frac{(\mass\otimes\mathcal L^1)(B_r(x,t)\setminus\GG_f)}{(\mass\otimes\mathcal L^1)(B_r(x,t))}\le C_D\frac{\mass(B_r(x)\cap \{f<t+\epsilon\})}{\mass (B_r(x))},$$
which in turn shows that if $(x,t)\in\partial^*\GG_f$, then $t\ge f^{\wedge}(x)$.
Conversely, arguing as above, we can show that if $r\in (0,\epsilon)$,
$$ 	\frac{(\mass\otimes\mathcal L^1)(B_{2 r}(x,t)\cap\GG_f)}{(\mass\otimes\mathcal L^1)(B_{2 r}(x,t))}\ge C_D\frac{\mass(B_{r}(x)\cap \{f>t+\epsilon\})}{\mass (B_r(x))}$$
and that 
$$ 	\frac{(\mass\otimes\mathcal L^1)(B_r(x,t)\setminus\GG_f)}{(\mass\otimes\mathcal L^1)(B_r(x,t))}\ge C_D\frac{\mass(B_{r}(x)\cap \{f<t-\epsilon\})}{\mass (B_r(x))}$$
which yield the second claim.
\end{proof}
\begin{defn}[Decomposition of the total variation measure]\label{decomptv}
Let \((\XX,\dist,\mass)\) be a PI space and \(f\in\BV(\XX)\). Then we write \(|\DIFF f|\) as \(|\DIFF f|^a+|\DIFF f|^s\),
where \(|\DIFF f|^a\ll\mass\) and \(|\DIFF f|^s\perp\mass\). We can decompose the singular part \(|\DIFF f|^s\)
as \(|\DIFF f|^j+|\DIFF f|^c\), where the \emph{jump part} is given by \(|\DIFF f|^j\coloneqq|\DIFF f|\llcorner J_f\), while the
\emph{Cantor part} is given by \(|\DIFF f|^c\coloneqq|\DIFF f|^s\llcorner(\XX\setminus J_f)\).
\end{defn}
By \cite[Theorem 5.1]{Ambrosio-Pinamonti-Speight15} and its proof, taking into account the elementary inequality
$$ a\le \sqrt{1+a^2}\le 1+a,\quad\text{ for every }a>0,$$
(or see \cite[Proposition 4.2]{ambmirpal04}) we obtain the following proposition.
\begin{prop}\label{prop:behaviour_TV_PI}
Let $(\XX,\dist,\mass)$ be a $\PI$ space and $f\in\BVv$. Then $\GG_f$ is a set of locally finite perimeter in $\XX\times\mathbb R$ and, denoting with $\pi$ the projection map $\XX\times\RR\rightarrow\XX$, it holds that
$$ \abs{\DIFF f}\le \pi_*|{\DIFF{\chi_{\GG_f}}}|\le \abs{\DIFF f}+\mass.$$
In particular, if \(C\subseteq\XX\) is a Borel set satisfying \(|\DIFF f|^c=|\DIFF f|\llcorner C\), then it holds that
$$ \pi_*(|{\DIFF{\chi_{\GG_f}}}|\llcorner C\times\RR)= |\DIFF f|\llcorner C.$$
\end{prop}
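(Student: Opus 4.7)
The plan is to deduce the proposition from \cite[Theorem 5.1]{Ambrosio-Pinamonti-Speight15}, which, in the PI setting, identifies the total variation of the characteristic function of the subgraph of a $\BV$ function via an area-type formula. Concretely, that theorem establishes that $\GG_f$ has locally finite perimeter in $\XX\times\RR$ and that the pushforward $\pi_*|\DIFF\chi_{\GG_f}|$ decomposes as $\sqrt{1+g^2}\,\mass + |\DIFF f|^s$, where $g$ denotes the Radon--Nikodym density of $|\DIFF f|^a$ with respect to $\mass$. Intuitively this generalizes the classical Euclidean identity $P(\GG_f)=\int\sqrt{1+|\nabla f|^2}\,\diff x + |D^s f|$, proved in the $\BV$ setting by approximating $f$ with Lipschitz functions (for which the area formula holds pointwise) and passing to the limit via relaxation and lower semicontinuity of the perimeter.

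Granted such a representation, the first chain of inequalities is immediate: applying pointwise the elementary bound $a\le\sqrt{1+a^2}\le 1+a$ with $a=g\ge 0$ and integrating against $\mass$ on Borel sets yields
$$ g\,\mass \;\le\; \sqrt{1+g^2}\,\mass \;\le\; g\,\mass + \mass, $$
i.e.\ $|\DIFF f|^a \le \pi_*|\DIFF\chi_{\GG_f}|^a \le |\DIFF f|^a + \mass$. Adding the singular parts, which coincide on the two sides by the APS formula, gives $|\DIFF f|\le \pi_*|\DIFF\chi_{\GG_f}|\le |\DIFF f| + \mass$, as asserted. In particular the right inequality, combined with the local finiteness of $\mass$ and of $|\DIFF f|$, confirms local finiteness of $\pi_*|\DIFF\chi_{\GG_f}|$ and hence of $|\DIFF\chi_{\GG_f}|$ on cylinders $B\times\RR$ with $B$ bounded.

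For the final claim, I would set $\sigma \defeq \pi_*|\DIFF\chi_{\GG_f}| - |\DIFF f|$, a non-negative Borel measure dominated by $\mass$, hence absolutely continuous with respect to $\mass$. Since $|\DIFF f|^c\perp \mass$, any set $C$ satisfying $|\DIFF f|^c = |\DIFF f|\llcorner C$ may be replaced, without altering $|\DIFF f|\llcorner C$, by its intersection with a Borel $\mass$-null set $N$ on which $|\DIFF f|^c$ is concentrated; we may therefore assume $\mass(C)=0$. Then $\sigma\llcorner C = 0$, and consequently
$$ \pi_*(|\DIFF\chi_{\GG_f}|\llcorner(C\times\RR)) \;=\; \pi_*|\DIFF\chi_{\GG_f}|\llcorner C \;=\; |\DIFF f|\llcorner C + \sigma\llcorner C \;=\; |\DIFF f|\llcorner C. $$
The main obstacle is really packaged inside the APS identification itself: isolating the \emph{exact} singular part $|\DIFF f|^s$ of $\pi_*|\DIFF\chi_{\GG_f}|$, rather than merely bounding it above and below by multiples of $|\DIFF f|^s$, is what requires the full force of the relaxation arguments in that reference. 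Once this is in hand, the present proposition reduces to the elementary measure-theoretic manipulations above.
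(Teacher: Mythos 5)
Your approach is essentially the paper's: the paper proves this by citing \cite[Theorem 5.1]{Ambrosio-Pinamonti-Speight15} (alternatively \cite[Proposition 4.2]{ambmirpal04}) together with the elementary inequality $a\le\sqrt{1+a^2}\le 1+a$, exactly as you do, and your packaging of the content via $\sigma\defeq\pi_*|\DIFF\chi_{\GG_f}|-|\DIFF f|$ with $0\le\sigma\le\mass$ is clean and correct.

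One step in your argument for the final claim is, however, not a legitimate reduction. Replacing $C$ by $C'=C\cap N$ with $\mass(N)=0$ does leave $|\DIFF f|\llcorner C$ unchanged, but it \emph{does} change the left-hand side $\pi_*(|\DIFF\chi_{\GG_f}|\llcorner(C\times\RR))$: since $|\DIFF\chi_{\GG_f}|$ is not concentrated on $N\times\RR$, one has in general $\pi_*(|\DIFF\chi_{\GG_f}|\llcorner((C\setminus N)\times\RR))\neq 0$. So ``we may therefore assume $\mass(C)=0$'' does not follow. In fact your own decomposition shows the discrepancy is exactly $\sigma\llcorner C$, and since $\sigma=(\sqrt{1+g^2}-g)\,\mass$ has strictly positive density, $\sigma(C)=0$ iff $\mass(C)=0$; taking $f\equiv 0$ and $C=\XX$ gives a concrete failure. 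The proposition as stated therefore implicitly requires $\mass(C)=0$ as an extra hypothesis — note that in every application in the paper (item (i) of \Cref{prop:def_set_C_f}, for instance) the set $C$ is indeed chosen $\mass$-null — so you should simply add $\mass(C)=0$ as an assumption rather than attempt to derive it, and then the rest of your computation goes through verbatim.
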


\begin{defn}\label{def:TotalVariationBV}
	Let $(\XX,\dist,\mass)$ be a metric measure space and $F\in \BV_{\rm loc}(\XX)^k$. We define
$$
|\DIFF F|(\Omega)\defeq\inf\bigg\{\liminf_{i\rightarrow\infty}\int_\Omega \Big({\sum_{j=1}^k(\lip F^j_i)^2}\Big)^{1/2}\,\diff\mass\;\bigg|\;(F_i)_i\subseteq \LIP_{\mathrm{loc}}(\Omega)^k,\  F_i\rightarrow F\text{ in }L^1_{\mathrm{loc}}(\Omega)^k\bigg\}
$$
for any open set $\Omega\subseteq\XX$. Then we extend this definition to Borel subsets of $\XX$, as done in the scalar case (see \cite[
Section 2.3]{BGBV} and the references therein). We also define $$J_F\defeq\bigcup_{i=1}^k J_{F_i}$$
\end{defn}
It is clear that Definition \ref{decomptv} extends immediately to the vector valued case.
\subsection{\texorpdfstring{\(\RCD\)}{RCD} spaces}\label{sec:RCD}
We assume the reader is familiar with the language of \(\RCD(K,N)\) spaces. Recall that an \(\RCD(K,N)\) space is an infinitesimally Hilbertian
metric measure space verifying the Curvature-Dimension condition \({\rm CD}(K,N)\), in the sense of Lott--Villani--Sturm, for some \(K\in\RR\)
and \(N\in[1,\infty)\). In this paper we only consider finite-dimensional \(\RCD(K,N)\) spaces, namely we assume \(N<\infty\).
Finite-dimensional \(\RCD\) spaces are PI. As proven in \cite{Mondino-Naber14,GP16-2,DPMR16,KelMon16,bru2018constancy}, the following structure theorem holds.
\begin{thm}
Let \((\XX,\dist,\mass)\) be an \(\RCD(K,N)\) space. Then there exists a number \(n\in\NN\) with \(1\leq n\leq N\), called the
\emph{essential dimension} of \((\XX,\dist,\mass)\), such that \(\mass(\XX\setminus\mathcal R_n)=0\). Moreover, the regular set
\(\mathcal R_n\) is \((\mass,n)\)-rectifiable and it holds that \(\mass\ll\mathcal H^n\llcorner\mathcal R_n\).
\end{thm}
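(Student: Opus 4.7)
The plan is to proceed in three stages: (i) show that every iterated tangent of the space is Euclidean of some integer dimension; (ii) use this to build bi-Lipschitz charts on each stratum $\mathcal{R}_n$ and deduce rectifiability plus $\mass\ll\HH^n$; (iii) finally prove that the dimension is constant $\mass$-a.e., which is the deepest point.

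First, the Bishop--Gromov inequality valid on $\RCD(K,N)$ spaces gives uniform local doubling, so the family of rescaled pointed spaces $(\XX,r^{-1}\dist,\mass_x^r,x)$ is pmGH-precompact. By stability of the curvature-dimension condition under pmGH convergence, every tangent at $\mass$-a.e.\ point lies in $\RCD(0,N)$. Two further ingredients reduce tangents to Euclidean spaces. The first is that $\mass$-a.e.\ tangent is a metric measure cone, obtained by a volume-cone-to-metric-cone result plus the fact that the quotient $\mass(B_r(x))/r^N$ has an almost-monotone behaviour at generic points. The second is a Preiss-type statement for iterated tangents: the set of tangents at a given point is $\mass$-a.e.\ closed under taking further tangents. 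Combined with the Gigli splitting theorem, which says that an $\RCD(0,N)$ space containing a line splits off an isometric factor $\mathbb R$, these two facts force every tangent at $\mass$-a.e.\ $x$ to be $(\mathbb R^{n(x)},\dist_e,\underline{\mathcal L}^{n(x)},0)$ for some integer $n(x)\in\{1,\ldots,\lfloor N\rfloor\}$.

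To handle each stratum $\mathcal R_n$, the main tool is the notion of $\delta$-splitting map: an $n$-tuple $u=(u_1,\dots,u_n)$ of harmonic functions such that $\fint_{B_r(x)}|\nabla u_i\cdot\nabla u_j-\delta_{ij}|\,\diff\mass$ and the Hessian defect are small. At an $n$-regular point one can construct such maps at all sufficiently small scales by using a transformation (Gram--Schmidt plus truncation) on the coordinate functions of a near-Euclidean chart obtained from the pmGH convergence. The quantitative rigidity, in the spirit of Cheeger--Colding, then gives that on a large subset of $B_r(x)$ the map $u$ is a bi-Lipschitz almost-isometry onto its image in $\mathbb R^n$; by a countable covering argument this yields $(\mass,n)$-rectifiability of $\mathcal R_n$. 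Absolute continuity $\mass\ll\HH^n\llcorner\mathcal R_n$ follows from the fact that at every $x\in\mathcal R_n$ the normalisation $C^r_x(\mass)^{-1}\mass\rightharpoonup\underline{\mathcal L}^n$ implies $\lim_{r\to 0}\mass(B_r(x))/r^n\in(0,+\infty)$, so a standard differentiation argument on doubling spaces forces the density of $\mass$ with respect to $\HH^n\llcorner\mathcal R_n$ to be positive and finite.

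The genuinely hard step, and the main obstacle, is the constancy of the dimension: a priori the above only says $\mass(\XX\setminus\bigcup_{n=1}^{\lfloor N\rfloor}\mathcal R_n)=0$, and one must exclude that $\mass$ is spread across several strata. The plan here is to follow the Brué--Semola strategy: if $\mass(\mathcal R_n)>0$ and $\mass(\mathcal R_m)>0$ with $n\neq m$, then by good cut-off and Poincaré arguments one finds a $W_2$-geodesic $(\mu_t)_{t\in[0,1]}$ with endpoints concentrated in $\mathcal R_n$ and $\mathcal R_m$ respectively; along such a geodesic the tangents enjoy a Hölder-type continuity in the pmGH sense, which forces the essential dimension to be locally constant along the geodesic, a contradiction. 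This uses in a crucial way the existence and regularity of the Regular Lagrangian Flow and the parallel transport-type estimates available in the $\RCD$ setting, and is by far the most delicate part; the rest of the theorem is by now a well-organised adaptation of the Cheeger--Colding machinery to the synthetic setting.
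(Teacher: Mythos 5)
This statement is not proved in the paper at all: it is quoted as a known structure theorem with citations to Mondino--Naber, Gigli--Pasqualetto, De~Philippis--Marchese--Rindler, Kell--Mondino, and Bru\`e--Semola, so there is no ``paper proof'' to compare against, only the literature. Your sketch is a reasonable high-level recounting of that literature proof, and the three-stage decomposition (Euclidean tangents a.e.\ via iterated tangents and splitting, rectifiability via $\delta$-splitting maps, and finally constancy of the dimension) is indeed how things go.

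Two substantive inaccuracies are worth flagging. First, in the absolute continuity step you assert that at \emph{every} $x\in\mathcal R_n$ the pmGH convergence $(\XX,r^{-1}\dist,\mass_x^r,x)\to(\RR^n,\dist_e,\underline{\mathcal L}^n,0)$ forces $\lim_{r\to 0}\mass(B_r(x))/r^n\in(0,+\infty)$. This is false: membership in $\mathcal R_n$ only controls the \emph{normalised} measures $C_x^r(\mass)^{-1}\mass$, i.e.\ ratios of volumes, and carries no information about the unnormalised density $\Theta_n(\mass,x)$. That the density exists and is positive and finite is precisely the content of the passage from $\mathcal R_n$ to $\mathcal R_n^*$, and the fact that $\mass(\mathcal R_n\setminus\mathcal R_n^*)=0$ is a theorem of Ambrosio--Honda--Tewodrose, not a consequence of the definition. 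The actual proofs of $\mass\ll\HH^n$ in the cited references go through either the behaviour of the reference measure under bi-Lipschitz charts, or the solution of Cheeger's conjecture on the image of the chart, or a careful differentiation argument on the chart image, and none of them is quite the ``standard differentiation on doubling spaces'' you invoke. Second, in the constancy-of-dimension step you conflate two distinct arguments: the original Bru\`e--Semola proof uses a Lusin-type Lipschitz regularity for trajectories of Regular Lagrangian Flows of Sobolev vector fields with bounded divergence, while the H\"older continuity of tangents along geodesics is a later result of Deng (which this paper \emph{does} use, but for the stronger statement about $\mathcal R_n^*$ and codimension-one concentration in Theorem \ref{thm:const_dim_cod1}, not for the basic constancy theorem quoted here). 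Your sketch combines the two as if they were a single argument; either route works, but they should not be presented as one.
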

Recall that \(\mathcal R_n\) is said to be \emph{\((\mass,n)\)-rectifiable} provided there exist Borel subsets \((A_i)_{i\in\NN}\) of
\(\mathcal R_n\) such that each \(A_i\) is biLipschitz equivalent to a subset of \(\RR^n\) and \(\mass(\mathcal R_n\setminus\bigcup_i A_i)=0\).
\subsubsection{Sobolev calculus on \(\RCD\) spaces}
We assume the reader is familiar with the language of \emph{\(L^p(\mass)\)-normed \(L^\infty(\mass)\)-modules} \cite{Gigli14} and
\emph{\(L^0({\rm Cap})\)-normed \(L^0({\rm Cap})\)-modules} \cite{debin2019quasicontinuous}.
Let \((\XX,\dist,\mass)\) be a given \(\RCD(K,N)\) space. We denote by \(L^2(T^*\XX)\) and \(L^2(T\XX)\) the \emph{cotangent module} and the
\emph{tangent module} of \((\XX,\dist,\mass)\), respectively. Moreover, \(L^0(T\XX)\) stands for the \(L^0(\mass)\)-completion of \(L^2(T\XX)\),
in the sense of \cite[Theorem/Definition 2.7]{Gigli17}.
A fundamental class of Sobolev functions on \(\XX\) is the algebra of \emph{test functions} \cite{Savare13,Gigli14}:
\[
{\rm Test}^\infty(\XX)\coloneqq\Big\{f\in D(\Delta)\cap L^\infty(\mass)\;\Big|\;|\DIFF f|\in L^\infty(\mass),
\,\Delta f\in W^{1,2}(\XX)\cap L^\infty(\mass)\Big\}.
\]
Since \(\RCD\) spaces enjoy the \emph{Sobolev-to-Lipschitz property}, each function in \({\rm Test}^\infty(\XX)\) has a Lipschitz representative.
Moreover, \({\rm Test}^\infty(\XX)\) is dense in \(W^{1,2}(\XX)\) and \(\nabla f\cdot\nabla g\in W^{1,2}(\XX)\) holds for every
\(f,g\in{\rm Test}^\infty(\XX)\). The class of \emph{test vector fields} is then defined as
\[
{\rm TestV}(\XX)\coloneqq\bigg\{\sum_{i=1}^k f_i\nabla g_i\;\bigg|\;k\in\NN,\,(f_i)_{i=1}^k,(g_i)_{i=1}^k\subseteq{\rm Test}^\infty(\XX)\bigg\}
\subseteq L^2(T\XX).
\]
We denote by \(L^0_{\rm Cap}(T\XX)\) the \emph{capacitary tangent module} on \((\XX,\dist,\mass)\) introduced in \cite[Theorem 3.6]{debin2019quasicontinuous}
and by \(\bar\nabla\colon{\rm Test}^\infty(\XX)\to L^0_{\rm Cap}(T\XX)\) the capacitary gradient operator. Given any Borel measure \(\mu\) on \(\XX\)
such that \(\mu\ll{\rm Cap}\) (meaning that \(\mu(N)=0\) for every \(N\subseteq\XX\) Borel with \({\rm Cap}(N)=0\)), we denote by
\(\pi_\mu\colon L^0({\rm Cap})\to L^0(\mu)\) the canonical projection.

Letting \(L^0_\mu(T\XX)\) be the quotient of \(L^0_{\rm Cap}(T\XX)\)
up to \(\mu\)-a.e.\ equality (where we identity two elements \(v,w\in L^0_{\rm Cap}(T\XX)\) if \(\pi_\mu(|v-w|)=0\) holds \(\mu\)-a.e.),
we have a natural projection map \(\pi_\mu\colon L^0_{\rm Cap}(T\XX)\to L^0_\mu(T\XX)\), which satisfies
\(|\pi_\mu(v)|=\pi_\mu(|v|)\) \(\mu\)-a.e.\ for all \(v\in L^0_{\rm Cap}(T\XX)\). The space \(L^0_\mu(T\XX)\) is an \(L^0(\mu)\)-normed
\(L^0(\mu)\)-module. As pointed out in \cite[Proposition 3.9]{debin2019quasicontinuous}, the quotient \(L^0_\mass(T\XX)\) can be identified
with the tangent module \(L^0(T\XX)\) and the projection \(\pi_\mass\colon L^0_{\rm Cap}(T\XX)\to L^0(T\XX)\) satisfies
\(\nabla f=\pi_\mu(\bar\nabla f)\) for every \(f\in{\rm Test}^\infty(\XX)\). Due to this consistency, to ease the notation we will
indicate the capacitary gradient of a test function \(f\) with \(\nabla f\) instead of \(\bar\nabla f\).
\medskip

The \emph{Hessian} of \(f\in{\rm Test}^\infty(\XX)\) is the unique tensor \({\rm Hess}(f)\in L^2(T^*\XX)\otimes L^2(T^*\XX)\) with
\[\begin{split}
&2\int h\,{\rm Hess}(f)(\nabla g_1\otimes\nabla g_2)\,\diff\mass\\
=\,&-\int\nabla f\cdot\nabla g_1\,{\rm div}(h\nabla g_2)+\nabla f\cdot\nabla g_2\,{\rm div}(h\nabla g_1)
+h\,\nabla f\cdot\nabla(\nabla g_1\cdot\nabla g_2)\,\diff\mass,
\end{split}\]
for every \(g_1,g_2,h\in{\rm Test}^\infty(\XX)\). Recall that a vector field \(v\in L^2(T\XX)\) is said to have a \emph{divergence},
briefly \(v\in D({\rm div})\), provided there exists a function \({\rm div}(v)\in L^2(\mass)\) such that
\begin{equation}\label{eq:def_divergence}
\int\nabla f\cdot v\,\diff\mass=-\int f\,{\rm div}(v)\,\diff\mass,\quad\text{ for every }f\in W^{1,2}(\XX);
\end{equation}
note that \({\rm div}(v)\) is uniquely determined by \eqref{eq:def_divergence}. The Hessian above is a local object:
\begin{equation}\label{eq:locality_Hessian}
\chi_{\{f_1=f_2\}}\cdot{\rm Hess}(f_1)=\chi_{\{f_1=f_2\}}\cdot{\rm Hess}(f_2),\quad\text{ for every }f_1,f_2\in{\rm Test}^\infty(\XX).
\end{equation}
The validity of this property allows to define the Hessian of a harmonic function \(f\) defined on an open set
\(\Omega\subseteq\XX\), as we are going to discuss. As proven in \cite{Jiang13}, the harmonic function \(f\colon\Omega\to\RR\)
is locally Lipschitz. In particular, \(\eta f\in{\rm Test}^\infty(\XX)\) holds for every cut-off function \(\eta\in{\rm Test}^\infty(\XX)\)
such that \({\rm spt}(\eta)\Subset\Omega\). As shown in \cite{AmbrosioMondinoSavare13-2,Mondino-Naber14}, there are plenty of cut-off test
functions: given any \(x\in\XX\) and \(0<r<R\), there exists \(\eta\in{\rm Test}^\infty(\XX)\) with \(0\leq\eta\leq 1\) such that
\(\eta=1\) on \(B_r(x)\) and \({\rm spt}(\eta)\Subset B_R(x)\). Thanks to this fact and to \eqref{eq:locality_Hessian}, it makes
sense to \(\mass\)-a.e.\ define the measurable function \(|{\rm Hess}(f)|\colon\Omega\to[0,+\infty)\) as
\[
|{\rm Hess}(f)|\coloneqq|{\rm Hess}(\eta f)|,\quad\mass\text{-a.e.\ on }\{\eta=1\},
\]
for every \(\eta\in{\rm Test}^\infty(\XX)\) such that \({\rm spt}(\eta)\Subset\Omega\).
\subsubsection{BV calculus on \(\RCD\) spaces}
Now we focus on BV functions and sets of finite perimeter on \(\RCD(K,N)\) spaces.
The following notion was introduced in \cite[Definition 4.1]{ambrosio2018rigidity}.
\begin{defn}[Tangents to a set of finite perimeter]
Let \((\XX,\dist,\mass,p)\) be a pointed \(\RCD(K,N)\) space, \(E\subseteq\XX\) a set of locally finite perimeter. Then
we define \({\rm Tan}_p(\XX,\dist,\mass,E)\) as the family of all quintuplets \((\YY,\dist_\YY,\mass_\YY,q,F)\) that verify the following two conditions:
\begin{itemize}
\item[\(\rm i)\)] \((\YY,\dist_\YY,\mass_\YY,q)\in{\rm Tan}_p(\XX,\dist,\mass)\),
\item[\(\rm ii)\)] \(F\subseteq\YY\) is a set of locally finite perimeter with \(\mass_\YY(F)>0\) for which the following property holds:
along a sequence \(r_i\searrow 0\) such that \((\XX,r_i^{-1}\dist,\mass_p^{r_i},p)\to(\YY,\dist_\YY,\mass_\YY,q)\) in the pmGH sense, with
realisation \(\ZZ\), it holds that \(\chi_E^i\to\chi_F\) in \(L^1_{\rm loc}\), where by \(\chi_E^i\) we mean the characteristic function
of \(E\) intended in the rescaled space \((\XX,r_i^{-1}\dist)\). If this is the case, we write
$$
(\XX,r_i^{-1}\dist,\mass_p^{r_i},p,E)\to(\YY,\dist_\YY,\mass_\YY,q,F).
$$
\end{itemize}
\end{defn}

The following theorem is extracted from \cite[Theorem 3.13]{BGBV}, see also \cite[Theorem 2.4]{bru2019rectifiability}.
\begin{thm}
Let $(\XX,\dist,\mass)$ be an $\RCD(K,N)$ space and let $F\in \BV(\XX)^k$. Then there exists a unique, up to $|\DIFF F|$-a.e.\ equality, $\nu_F\in L^0_\capa(T\XX)^k$ such that $|\nu_F|=1\ |\DIFF F|$-a.e.\ and 
    $$
    \sum_{j=1}^k\int_\XX F_j{\rm div}(v_j)\dd{\mass}=-\int_\XX \pi_{|\DIFF F|}(v)\,\cdot\,\nu_F\dd{|\DIFF F|}\quad\text{ for every $v=(v_1,\dots,v_k)\in\TestV(\XX)^k$.}
    $$
\end{thm}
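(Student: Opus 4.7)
If $\nu,\nu'\in L^0_\capa(T\XX)^k$ both satisfy the stated identity, then $\int_\XX \pi_{|\DIFF F|}(v)\cdot(\nu-\nu')\dd{|\DIFF F|}=0$ for every $v\in\TestV(\XX)^k$. Since $\pi_{|\DIFF F|}(\TestV(\XX))$ generates $L^0_{|\DIFF F|}(T\XX)$ as an $L^0(|\DIFF F|)$-module, this forces $\pi_{|\DIFF F|}(\nu)=\pi_{|\DIFF F|}(\nu')$, i.e.\ $\nu=\nu'$ $|\DIFF F|$-a.e.

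\textbf{Scalar case.} First I would treat $k=1$, $f\in\BV(\XX)$. The Gauss--Green theorem for sets of finite perimeter on $\RCD(K,N)$ spaces \cite[Theorem 2.4]{bru2019rectifiability} supplies, for every set $E$ of finite perimeter, a unique unit capacitary vector field $\nu_E\in L^0_\capa(T\XX)$ with
\[
\int_\XX\chi_E\,{\rm div}(v)\dd{\mass}=-\int_\XX \pi_{P(E,\cdot)}(v)\cdot\nu_E\dd{P(E,\cdot)}\quad\forall v\in\TestV(\XX).
\]
Combining this with the coarea formula $|\DIFF f|=\int_\RR P(\{f>t\},\cdot)\dd{t}$ and the layer-cake identity
\[
\int_\XX f\,{\rm div}(v)\dd{\mass}=\int_0^{+\infty}\!\!\int_\XX \chi_{\{f>t\}}\,{\rm div}(v)\dd{\mass}\dd{t}-\int_{-\infty}^0\!\!\int_\XX\chi_{\{f\leq t\}}\,{\rm div}(v)\dd{\mass}\dd{t},
\]
I would assemble the family $\{\nu_{\{f>t\}}\}_{t\in\RR}$ into a single $\nu_f\in L^0_\capa(T\XX)$ via a measurable-selection / disintegration argument governed by the decomposition $|\DIFF f|=\int P(\{f>t\},\cdot)\dd{t}$, obtaining $|\nu_f|=1$ $|\DIFF f|$-a.e.\ together with the scalar Gauss--Green identity.

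\textbf{Vector construction.} Apply the scalar case componentwise to get $\nu_{F_j}\in L^0_\capa(T\XX)$ with $|\nu_{F_j}|=1$ $|\DIFF F_j|$-a.e. From $\sqrt{\sum_\ell a_\ell^2}\geq a_j$ applied inside \Cref{def:TotalVariationBV} we get $|\DIFF F_j|\leq|\DIFF F|$, so Radon--Nikodym supplies Borel densities $\rho_j\in L^1(|\DIFF F|)$, $\rho_j\geq 0$, with $|\DIFF F_j|=\rho_j\,|\DIFF F|$. Set
\[
\tilde\nu\coloneqq(\rho_1\nu_{F_1},\dots,\rho_k\nu_{F_k})\in L^0_\capa(T\XX)^k,\qquad \rho_j\nu_{F_j}\coloneqq 0 \text{ on }\{\rho_j=0\}.
\]
Summing the $k$ scalar identities yields
\[
\sum_{j=1}^k\int_\XX F_j\,{\rm div}(v_j)\dd{\mass}=-\int_\XX \pi_{|\DIFF F|}(v)\cdot\tilde\nu\dd{|\DIFF F|}\quad\forall v\in\TestV(\XX)^k.
\]
Define $\nu_F\coloneqq\tilde\nu/|\tilde\nu|$ on $\{|\tilde\nu|>0\}$ (and an arbitrary unit capacitary vector field elsewhere) and $\tilde\mu\coloneqq|\tilde\nu|\,|\DIFF F|$; the identity above rewrites as $-\int \pi_{\tilde\mu}(v)\cdot\nu_F\dd{\tilde\mu}$, so the theorem reduces to the identity $\tilde\mu=|\DIFF F|$.

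\textbf{Polar normalisation (main obstacle).} The crux is $|\tilde\nu|=1$ $|\DIFF F|$-a.e., equivalently $\sum_j\rho_j^2=1$. I would derive this from the sup-duality
\[
|\DIFF F|(\Omega)=\sup\bigg\{-\sum_{j=1}^k\int_\XX F_j\,{\rm div}(v_j)\dd{\mass}\;:\;v\in\TestV(\XX)^k,\,\supp(v)\subseteq\Omega,\,|v|\leq 1\bigg\},
\]
a metric analogue of the classical variational definition of vector-valued BV. Once this is at hand, Cauchy--Schwarz applied to the Gauss--Green identity for $\tilde\nu$ immediately gives $|\DIFF F|(\Omega)\leq\tilde\mu(\Omega)$, while the converse follows by feeding into the sup a cutoff $\eta\in\LIP_{\rm bs}(\Omega)$ times a sequence $v^n\in\TestV(\XX)^k$, $|v^n|\leq 1$, with $\pi_{\tilde\mu}(v^n)\to\nu_F$ in $L^0_{\tilde\mu}$ (such approximants exist by density of $\TestV(\XX)$ in the capacitary tangent module and the Sobolev-to-Lipschitz property). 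The hard part is precisely the sup-duality itself: the BV total variation in \Cref{def:TotalVariationBV} is defined by relaxation of $\lip$, and relating these local Lipschitz constants to divergences of test vector fields requires genuinely $\RCD$-specific tools, namely the $L^0_\capa$-machinery of \cite{debin2019quasicontinuous}, density of $\TestV(\XX)$ in the relevant module topologies, and the smoothing via the heat flow to pass from Lipschitz approximants to test vector fields while preserving the divergence pairing.
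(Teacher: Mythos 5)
The statement you are proving is imported into the paper from \cite[Theorem 3.13]{BGBV}; the paper itself gives no proof, so there is no internal argument to compare against directly. Judging your sketch on its own terms, the overall architecture is reasonable (uniqueness via density of test fields, scalar case via Gauss--Green on level sets plus coarea, vector case via componentwise normals plus a polar normalization), and the uniqueness paragraph and the reduction to the identity $\tilde\mu=|\DIFF F|$ are correct. However, the sketch defers the two steps that carry essentially all of the technical weight, and neither is a routine fill-in.

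First, in the scalar case the ``measurable-selection / disintegration argument'' hides a genuine locality statement: one must show that for $|\DIFF f|$-a.e.\ $x$ the unit normals $\nu_{\{f>t\}}(x)$ agree for all $t$ for which $x\in\mathcal F\{f>t\}$, and that the resulting choice is a Borel (indeed quasi-continuous) object in $L^0_\capa(T\XX)$. This is exactly the content of \cite[Lemma 3.27]{BGBV}, which the present paper invokes later in the proof of Lemma~\ref{coincide}; without it the assembly of $\{\nu_{\{f>t\}}\}_t$ into a single $\nu_f$ does not go through. Second, the vector case hinges entirely on a sup-duality representation of $|\DIFF F|(\Omega)$ by test vector fields with $|v|\le 1$. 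You correctly identify this as ``the hard part'', but note that the inequality $|\tilde\nu|\le 1$ $|\DIFF F|$-a.e.\ does \emph{not} follow from $\rho_j\le 1$ and $\sum_j\rho_j\ge 1$ (one only gets $\sum\rho_j^2\le k$ a priori), so the sup-duality is genuinely needed and cannot be bypassed by elementary estimates on the densities $\rho_j$. Establishing it requires showing that the relaxation in Definition~\ref{def:TotalVariationBV}, built from $\lip$-type energies, coincides with a divergence-form dual built from $\TestV(\XX)^k$, together with the quasi-continuity of $|v|$ (to pass from $|v|\le 1$ $\mass$-a.e.\ to $|v|\le 1$ $\capa$-a.e.\ and hence $|\DIFF F|$-a.e.) and a mollification argument producing competitors in $\TestV(\XX)^k$ with $|v^n|\le 1$ and $\pi_{|\DIFF F|}(v^n)\to\nu_F$; each of these is a non-trivial lemma in its own right. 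As it stands, the proposal is an acceptable road map but not a proof.
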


Notice that if $F\in \BV(\XX)^k$, we consider $\nu_F$ as an element of $L^0_\capa (T\XX)^k$ that is defined $|\DIFF F|$-a.e.. This allows us, via a standard localization procedure, to define $\nu_F$ even if $F$ is a vector valued function of locally bounded variation, or, in other words, if $F$ is a $k$-tuple of functions of locally bounded variation.
In particular, if $E$ is a set of locally finite perimeter, we naturally have a unique, up to $|\DIFF \chi_E|$-a.e.\ equality, $\nu_E\in L^0_\capa (T\XX)$, where we understand $\nu_E=\nu_{\chi_E}$.

Next we recall that, as proven in \cite{bru2019rectifiability},
each set of locally finite perimeter \(E\) in an \(\RCD(K,N)\) space \((\XX,\dist,\mass)\) satisfies \(|\DIFF\chi_E|\ll{\rm Cap}\). Notice however that the same result holds in every metric measure space, see \cite[Theorem 2.5]{BGBV}. By the coarea formula, this absolute continuity extends immediately to total variations, so that $$|\DIFF F|\ll\capa\qquad\text{for every }F\in\BV_{\rm loc}(\XX)^n.$$

The following proposition summarizes results about sets of finite perimeter that are now well-known in the context of $\PI$ spaces and are proved in \cite{amb01,erikssonbique2018asymptotic}, see also \cite{AmbrosioAhlfors}.
\begin{prop}\label{zuppa}
Let $(\XX,\dist,\mass)$ be a $\PI$ space and let $E\subseteq\XX$ be a set of locally finite perimeter. Then, for $|\DIFF\chi_E|$-a.e.\ $x\in\XX$ the following hold:
\begin{enumerate}[label=\roman*)]
    \item $E$ is \emph{asymptotically minimal} at $x$, in the sense that there exist $r_x>0$ and a function $\omega_x:(0,r_x)\rightarrow(0,\infty)$ with $\lim_{r\searrow 0}\omega_x(r)=0$ satisfying
    $$
    |\DIFF\chi_E|(B_r(x))\le(1+\omega_x(r))|\DIFF\chi_{E'}|(B_r(x)),\qquad\text{if $r\in (0,r_x)$ and $E'\Delta E\Subset B_r(x),$}
    $$
    \item $|\DIFF\chi_E|$ is \emph{asymptotically doubling} at $x$:
    $$\limsup_{r\searrow 0}\frac{|\DIFF\chi_E| (B_{2 r}(x))}{|\DIFF\chi_E| (B_{r}(x))}<\infty, $$
    \item we have the following estimates
    $$
    0<\liminf_{r\searrow 0} \frac{r |\DIFF\chi_E|(B_r(x))}{\mass(B_r(x))}\le \limsup_{r\searrow 0}\frac{r |\DIFF\chi_E|(B_r(x))}{\mass(B_r(x))}<\infty,
    $$
    \item the following density estimate holds
    $$ 
    \liminf_{r\searrow 0} \min\bigg\{\frac{\mass( B_r(x)\cap E)}{\mass(B_r(x))},\frac{\mass( B_r(x)\setminus E)}{\mass(B_r(x))}\bigg\}>0.$$
\end{enumerate}
\end{prop}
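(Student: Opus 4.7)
The plan is to derive all four properties from the single fundamental tool available in $\PI$ spaces, namely the relative isoperimetric inequality
\[\min\{\mass(B_r(x)\cap E),\,\mass(B_r(x)\setminus E)\}\le C\,r\,|\DIFF\chi_E|(B_{\lambda r}(x)),\]
which follows by applying the weak local $(1,1)$-Poincar\'e inequality to Lipschitz approximations of $\chi_E$ and passing to the limit. Together with the comparability $|\DIFF\chi_E|\sim \mathcal H^h\llcorner\partial^*E$ available in $\PI$ spaces, this tool essentially determines the behaviour of $|\DIFF\chi_E|$ on small balls. I would prove the four items in the order: upper bound of (iii), then (iv), then lower bound of (iii), then (ii), and finally (i), since each step rests on the preceding ones.

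For the upper bound in (iii), I would invoke the inequality $|\DIFF\chi_E|\le C\,\mathcal H^h\llcorner\partial^*E$ (whose proof in the $\PI$ setting uses Lipschitz approximations and the Poincar\'e inequality) combined with the direct upper density bound $\mathcal H^h(B_r(x))\le C\,\mass(B_r(x))/r$ coming from the very definition of $\mathcal H^h$. For (iv), I would argue by contradiction at a point $x\in\mathrm{supp}(|\DIFF\chi_E|)$: if $\mass(B_{r_n}(x)\cap E)/\mass(B_{r_n}(x))\to 0$ along a sequence, the isoperimetric inequality together with a Vitali-type covering of a density neighbourhood of $x$ would force $|\DIFF\chi_E|$ to give no mass to balls around $x$, contradicting the support hypothesis; the argument is symmetric in $E$ and $E^c$. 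Given (iv), the lower bound in (iii) follows immediately from the isoperimetric inequality and the doubling of $\mass$ (absorbing the factor $\lambda$), and (ii) is then obtained by chaining the upper bound at scale $2r$ with the lower bound at scale $r$ and invoking doubling of $\mass$.

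The main obstacle is (i), the asymptotic minimality. I would argue by contradiction: suppose on a Borel set $A$ of positive $|\DIFF\chi_E|$-measure there exist $\eta_0>0$ and, for each $x\in A$, arbitrarily small radii $r$ together with competitor sets $E'_{x,r}$ such that $E'_{x,r}\Delta E\Subset B_r(x)$ and $|\DIFF\chi_{E'_{x,r}}|(B_r(x))\le(1-\eta_0)|\DIFF\chi_E|(B_r(x))$. Extracting a Vitali-type covering $\{B_{r_i}(x_i)\}_i$ of $A$ and gluing the local competitors into a global set $\widetilde E$ that agrees with $E$ outside $\bigcup_i B_{r_i}(x_i)$ would produce a competitor with $|\DIFF\chi_{\widetilde E}|(\XX)<|\DIFF\chi_E|(\XX)$, contradicting the minimality intrinsic to the relaxation definition of the perimeter. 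The delicate technical heart of the argument, and the main obstacle, is to control the excess perimeter generated along the boundaries of the balls of the cover: via a coarea slicing in the radial variable one chooses the radii $r_i$ so that $|\DIFF\chi_{E'_{x_i,r_i}}|(\partial B_{r_i}(x_i))=0$, and the interface contribution is then shown to be $o(\eta_0\,|\DIFF\chi_E|(A))$ as $r\to 0$ using the asymptotic doubling (ii) and the upper bound (iii) already proved, thereby closing the contradiction.
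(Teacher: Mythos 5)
The paper does not actually prove this proposition: it merely records that the four items are now standard in PI spaces, citing Ambrosio \cite{amb01,AmbrosioAhlfors} and Eriksson-Bique--Gill--Lahti--Shanmugalingam \cite{erikssonbique2018asymptotic}, so there is no internal proof to compare against. Measured against those references, your sketch has a genuine gap in item (i) and a false ingredient in item (iii).

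For (i), gluing local competitors into a global $\widetilde E$ with $|\DIFF\chi_{\widetilde E}|(\XX)<|\DIFF\chi_E|(\XX)$ yields no contradiction whatsoever: $E$ is an arbitrary set of locally finite perimeter, not a perimeter minimizer, so there is no ``minimality intrinsic to the relaxation definition'' to violate. Nothing prevents a set of finite perimeter from admitting competitors of strictly smaller total perimeter. The actual argument (see \cite[Proposition 5.1 and Lemma 5.2]{amb01}) does not run through global minimality at all; one introduces the superadditive set function $\Psi(\Omega)\coloneqq\inf\{|\DIFF\chi_F|(\Omega): F\Delta E\Subset\Omega\}$, notes $\Psi\le|\DIFF\chi_E|$, and proves a differentiation-type lemma for superadditive set functions dominated by a Radon measure (relying on locality and lower semicontinuity of perimeter) to get $\lim_{r\searrow 0}\Psi(B_r(x))/|\DIFF\chi_E|(B_r(x))=1$ for $|\DIFF\chi_E|$-a.e.\ $x$. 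The Vitali gluing you describe is a piece of the proof of that lemma, not a device for deriving a contradiction from a minimality that is not there.

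For the upper bound in (iii), the inequality $\mathcal H^h(B_r(x))\le C\,\mass(B_r(x))/r$ is false: already in $\RR^n$, $n\ge 2$, the codimension-one Hausdorff measure of a ball is $+\infty$, because covers by balls of vanishing radius cost more and more (only the premeasure $\mathcal H^h_\delta$ with $\delta>2r$ is controlled via the trivial one-ball cover; the supremum over $\delta$ is not). What one needs, and what is true, is $\mathcal H^h(\partial^*E\cap B_r(x))\lesssim\mass(B_r(x))/r$, but that is itself a nontrivial density estimate on the essential boundary, essentially equivalent to what (iii) asserts, so invoking it is circular. In the references this upper bound is obtained \emph{after} asymptotic minimality (i), by comparing $E$ with $E\setminus B_\rho(x)$ or $E\cup B_\rho(x)$ for good radii $\rho$ whose perimeter is controlled by the coarea formula; your intended order, with (i) proved last, cannot therefore be maintained.
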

\begin{rem}\label{rmk:per_asympt_doubl}
It is well known (see \cite[Theorem 3.4.3 and page 77]{HKST15}) that for an asymptotically doubling measure the Lebesgue differentiation Theorem holds. In particular, if $E$ is a set of locally finite perimeter in a $\PI$ space and $f\in L^1(|\DIFF\chi_E|)$, then for $|\DIFF\chi_E|$-a.e.\ $x$ it holds
$$
\lim_{r\searrow 0}\int_{B_r(x)} |f(y)-f(x)|\dd{|\DIFF\chi_E|(y)}=0.
$$
\end{rem}

Let us now introduce the notion of reduced boundary of a set of locally finite perimeter. First, we introduce the set $\mathcal{R}_n^*$. Following \cite{AmbrosioTilli04}, given a metric measure space \((\XX,\dist,\mu)\) and a real number \(k\geq 0\),
we define the \emph{upper} and \emph{lower \(k\)-dimensional densities} of \(\mu\) as
\[
\overline\Theta_k(\mu,x)\coloneqq\varlimsup_{r\searrow 0}\frac{\mu(B_r(x))}{\omega_k r^k},\qquad
\underline\Theta_k(\mu,x)\coloneqq\varliminf_{r\searrow 0}\frac{\mu(B_r(x))}{\omega_k r^k},\qquad\text{ for every }x\in\XX,
\]
respectively. In the case where \(\overline\Theta_k(\mu,x)\) and \(\underline\Theta_k(\mu,x)\) coincide, we denote their
common value by \(\Theta_k(\mu,x)\in[0,+\infty]\) and we call it the \emph{\(k\)-dimensional density} of \(\mu\) at \(x\).
\begin{defn}
Let \((\XX,\dist,\mass)\) be an \(\RCD(K,N)\) space having essential dimension \(n\). Then we define the set
\(\mathcal R_n^*=\mathcal R_n^*(\XX)\subseteq\mathcal R_n\) as
\[
\mathcal R_n^*\coloneqq\big\{x\in\mathcal R_n\;\big|\;\exists\Theta_n(\mass,x)\in(0,+\infty)\big\}.
\]
\end{defn}
In the case in which $\mass=\mathcal{H}^N$, by Bishop--Gromov comparison one has that $\Theta_N(\mathcal{H}^N,x)$ exists and it is positive for every $x\in \XX$. Moreover, the volume convergence results in \cite{DPG17} and the lower semicontinuity of the density imply that $\Theta_N(\mathcal{H}^N,x)\leq 1$ for every $x\in\XX$.
Notice that the set \(\mathcal R_n^*\) is Borel, see Remark \ref{RnBorel}. As shown in \cite[Theorem 4.1]{AHT17}, it holds that \(\mass(\XX\setminus\mathcal R_n^*)=0\).

\begin{defn}[Reduced boundary]\label{def:ReducedBoundary}
Let \((\XX,\dist,\mass)\) be an \(\RCD(K,N)\) space. Let \(E\subseteq\XX\) be a set of locally finite perimeter.
Then we define the \emph{reduced boundary} \(\mathcal F E\subseteq\partial^* E\) of \(E\) as the set of all those
points \(x\in \mathcal R_n^*\) satisfying all the four conclusions of Proposition \ref{zuppa} and such that 
\begin{equation}\label{eqn:FnE}
\Tan_x(\XX,\dist,\mass,E)=\big\{(\RR^n,\dist_e,\underline{\mathcal L}^n,0,\{x_n>0\})\big\},
\end{equation}
where \(n\in\NN\), \(n\leq N\) stands for the essential dimension of \((\XX,\dist,\mass)\). We recall that the set of the points $x\in X$ that satisfy \eqref{eqn:FnE} is denoted by $\mathcal{F}_nE$. 
\end{defn}

As proven in \cite{bru2021constancy} after \cite{ambrosio2018rigidity,bru2019rectifiability}, taking into account the forthcoming Theorem \ref{thm:const_dim_cod1}, the perimeter measure
\(|\DIFF\chi_E|\) is concentrated on the reduced boundary \(\mathcal F E\).

\begin{rem}\label{rem:properties_FE}
By the proof of \cite[Corollary 4.10]{ambrosio2018rigidity}, by \cite[Corollary 3.4]{ambrosio2018rigidity}, and by the membership to $\mathcal R_n^*$,
we see that for any $x\in\FF E$ the following hold.
\begin{enumerate}[label=\roman*)]
\item If $r_i\searrow 0$ is such that \begin{equation}\label{convsemplice}(\XX, r_i^{-1}\dist,\mass_x^{r_i},x)\rightarrow (\RR^n,\dist_e, \underline{\mathcal L}^n,0)\end{equation}
in a realisation $(\ZZ,\dist_\ZZ)$, then, up to not relabelled subsequences and a change of coordinates in $\RR^n$,
$$(\XX, r_i^{-1}\dist,\mass_x^{r_i},x,E)\rightarrow (\RR^n,\dist_e, \underline{\mathcal L}^n,0,\{x_n>0\}),$$
in the same realisation $(\ZZ,\dist_\ZZ)$. Notice that, given a sequence $r_i\searrow 0$, it is always possible to find a subsequence satisfying \eqref{convsemplice}.
\item\label{conv}  If $r_i\searrow 0$ is such that $$(\XX, r_i^{-1}\dist,\mass_x^{r_i},x,E)\rightarrow (\RR^n,\dist_e, \underline{\mathcal L}^n,0,\{x_n>0\})$$ in a realisation $(\ZZ,\dist_\ZZ)$, then $|\DIFF\chi_E|$ weakly converge to $|\DIFF\chi_{\{x_n>0\}}|$ in duality with $C_{\mathrm{bs}}(Z)$.
\item We have
\begin{equation}\label{usefullimits}
\begin{split}
&\lim_{r\searrow 0}\frac{\mass(B_r(x))}{r^n}=\omega_n\Theta_n(\mass,x)\in (0,+\infty),\\
&\lim_{r\searrow 0}\frac{C_x^r}{r^n}=\frac{\omega_n}{n+1}\Theta_n(\mass,x),\\
&\lim_{r\searrow 0}\frac{|\DIFF\chi_E|(B_r(x))}{r^{n-1}}={\omega_{n-1}}\Theta_n(\mass,x).
\end{split}
\end{equation}
\end{enumerate}

\end{rem}

\begin{defn}[Good coordinates]\label{defn:good_coordinates}
Let \((\XX,\dist,\mass)\) be an \(\RCD(K,N)\) space of essential dimension \(n\). Let \(E\subseteq\XX\) be a set of locally finite perimeter
and \(x\in\mathcal F E\) be given. Then we say that an \(n\)-tuple \(u=(u^1,\ldots,u^n)\) of harmonic functions \(u^\ell\colon B_{r_x}(x)\to\RR\)
is a \emph{system of good coordinates} for \(E\) at \(x\) provided the following properties are satisfied:
\begin{itemize}
\item[\(\rm i)\)] For any \(\ell,j=1,\ldots,n\), it holds that
\[
\lim_{r\searrow 0}\fint_{B_r(x)}|\nabla u^\ell\cdot\nabla u^j-\delta_{\ell j}|\,\diff\mass=
\lim_{r\searrow 0}\fint_{B_r(x)}|\nabla u^\ell\cdot\nabla u^j-\delta_{\ell j}|\,\diff|\DIFF\chi_E|=0.
\]
\item[\(\rm ii)\)] For any \(\ell=1,\ldots,n\), it holds that
\begin{equation}\label{eq:def_nu_in_good_coord}
\exists\,\nu_\ell(x)\coloneqq\lim_{r\searrow 0}\fint_{B_r(x)}\nu_E\cdot\nabla u^\ell\,\diff|\DIFF\chi_E|,
\quad\lim_{r\searrow 0}\fint_{B_r(x)}|\nu_\ell(x)-\nu_E\cdot\nabla u^\ell|\,\diff|\DIFF\chi_E|=0.
\end{equation}
\item[\(\rm iii)\)] The resulting vector \(\nu(x)\coloneqq(\nu_1(x),\ldots,\nu_n(x))\in\RR^n\) satisfies \(|\nu(x)|=1\).
\end{itemize}
\end{defn}
{
The following theorem is proved in \cite[Theorem 3.6]{bru2021constancy}.
\begin{thm}
Let \((\XX,\dist,\mass)\) be an \(\RCD(K,N)\) space of essential dimension \(n\). Let \(E\subseteq\XX\) be a set of locally finite perimeter
and \(x\in\mathcal F E\) be given. Then, good coordinates exist at \(|\DIFF\chi_E|\)-a.e.\ point \(x\in\mathcal F E\).
\end{thm}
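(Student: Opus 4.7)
The key tool is the notion of $\delta$-splitting map at $x$: an $n$-tuple $u=(u^1,\ldots,u^n)$ of harmonic functions on $B_r(x)$ satisfying
\[
\fint_{B_r(x)}|\nabla u^\ell\cdot\nabla u^j-\delta_{\ell j}|\,\diff\mass\le\delta\quad\text{and}\quad r^2\fint_{B_r(x)}|\mathrm{Hess}(u^\ell)|^2\,\diff\mass\le\delta.
\]
The strategy is to produce such splitting maps at $|\DIFF\chi_E|$-a.e.\ point of $\FF E$, and then promote a diagonal choice of them into a system of good coordinates by combining Lebesgue differentiation with the blow-up analysis of Remark~\ref{rem:properties_FE}.

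First I would argue that, for every $\delta>0$, a $\delta$-splitting map exists on some ball $B_r(x)$ for $|\DIFF\chi_E|$-a.e.\ $x\in\FF E$. Every point of $\FF E$ is Euclidean in the pmGH sense (since $\FF E\subseteq\Reg_n^*$), and the harmonic-replacement construction in the spirit of Cheeger-Colding, adapted to $\RCD(K,N)$ in \cite{Mondino-Naber14,ambrosio2018rigidity}, needs only this structural input: one picks an $\varepsilon$-isometry at small scale, pulls back the Euclidean coordinate projections to Lipschitz approximate coordinate functions, replaces them by their harmonic extensions on $B_r(x)$, and uses $\RCD$ Hessian-improvement estimates to deduce the splitting bounds. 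Diagonalising over $\delta_k\to 0$ with shrinking scales $r_k$ yields a single $n$-tuple $u$ on a neighbourhood of $x$ for which the averages in (i) tend to zero with respect to $\mass$.

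Next, condition (i) for $|\DIFF\chi_E|$ and the whole of condition (ii) follow from Lebesgue differentiation. Indeed, $|\DIFF\chi_E|$ is asymptotically doubling at $|\DIFF\chi_E|$-a.e.\ $x\in\FF E$ by Proposition~\ref{zuppa}, and the functions $\nabla u^\ell\cdot\nabla u^j$ and $\nu_E\cdot\nabla u^\ell$ are bounded (since $|\nu_E|=1$ and harmonic functions are locally Lipschitz by \cite{Jiang13}), so Remark~\ref{rmk:per_asympt_doubl} ensures that their averages on $B_r(x)$ converge to their pointwise values at $|\DIFF\chi_E|$-a.e.\ $x$. For (ii) the limit defines $\nu_\ell(x)$ and the $L^1$-averaged convergence is automatic at Lebesgue points. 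For (i) with respect to $|\DIFF\chi_E|$, the limit is $\delta_{\ell j}$ because the rescaled products $\nabla\tilde u^\ell_i\cdot\nabla\tilde u^j_i$ converge locally uniformly to $\delta_{\ell j}$ on the tangent $\RR^n$, while $r_i^{-(n-1)}|\DIFF\chi_E|$ converges to the perimeter of the half-space (Remark~\ref{rem:properties_FE}~\ref{conv}).

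For (iii) I would combine $|\nu_E|=1$ $|\DIFF\chi_E|$-a.e.\ with (i) for $|\DIFF\chi_E|$. Decomposing $\nu_E=\sum_\ell(\nu_E\cdot\nabla u^\ell)\nabla u^\ell+\rho$ in the almost-orthonormal frame $(\nabla u^\ell)$, one computes $1=|\nu_E|^2=\sum_\ell(\nu_E\cdot\nabla u^\ell)^2+|\rho|^2+\eta$, where $\eta$ depends linearly on the entries $\nabla u^\ell\cdot\nabla u^j-\delta_{\ell j}$; integrating against $|\DIFF\chi_E|$ on $B_r(x)$ and letting $r\searrow 0$, both $\eta$ and $|\rho|^2$ vanish by (i), while at Lebesgue points $\fint(\nu_E\cdot\nabla u^\ell)^2\,\diff|\DIFF\chi_E|\to\nu_\ell(x)^2$, giving $|\nu(x)|=1$. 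The main obstacle is the first step: the standard $\mass$-a.e.\ existence of splitting maps does not transfer automatically to $|\DIFF\chi_E|$-a.e., since the two measures are mutually singular, so one must re-examine the construction and verify that it uses only Euclidean pmGH tangency, a condition which does hold on all of $\FF E$.
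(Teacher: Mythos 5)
This statement is quoted from \cite[Theorem 3.6]{bru2021constancy}; the present paper does not reprove it, so there is no in-paper argument to compare against, and I assess your sketch on its own terms. The skeleton you propose (splitting maps, then Lebesgue differentiation and blow-up) is the right one, and the ingredients you list are correct, but there are two genuine gaps.

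The main one is in the step ``Diagonalising over $\delta_k\to 0$ with shrinking scales $r_k$ yields a single $n$-tuple $u$ \dots for which the averages in (i) tend to zero with respect to $\mass$.'' For each $k$ the Cheeger--Colding construction produces a \emph{different} $\delta_k$-splitting map $u_{(k)}$ on $B_{r_k}(x)$; there is no diagonal argument that assembles these distinct harmonic $n$-tuples into a single $u$ whose $\mass$-Lebesgue matrix at $x$ is exactly the identity. What a \emph{single} $\eta$-splitting map actually gives you --- using harmonicity, which makes $\nabla u^\ell\cdot\nabla u^j$ pointwise defined (cf.\ the discussion after Lemma~\ref{lemsplitting} and \cite[Remark 2.10]{bru2021constancy}) --- is only that $M_{\ell j}(x):=\lim_{r\searrow0}\fint_{B_r(x)}\nabla u^\ell\cdot\nabla u^j\,\diff\mass$ exists and is $\eta$-close to $\delta_{\ell j}$. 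Condition (i) of Definition~\ref{defn:good_coordinates} demands the limit be \emph{exactly} $\delta_{\ell j}$, and the way to achieve this is to replace $u$ by $Au$ for a constant invertible $A=A(x)\in\RR^{n\times n}$ with $AM(x)A^{\mathrm T}=\mathrm{Id}$ (possible since $M(x)$ is close to the identity). Since $A$ is constant, $Au$ is again an $n$-tuple of harmonic functions; this is precisely the matrix $A_\eta(x)$ built in Definition~\ref{defn:GoodCollection}. Without this linear correction the argument does not close.

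The second gap is in condition (iii): after writing $\nu_E=\sum_\ell(\nu_E\cdot\nabla u^\ell)\nabla u^\ell+\rho$, you claim $\fint_{B_r(x)}|\rho|^2\,\diff|\DIFF\chi_E|\to 0$ ``by (i).'' Condition (i) only makes the frame $\{\nabla u^\ell\}$ asymptotically orthonormal; it does not by itself force the residual $\rho$ (which is orthogonal to that frame) to vanish. For that you must know that the $n$ vectors $\{\nabla u^\ell\}$ asymptotically \emph{span} the fiber $|\DIFF\chi_E|$-a.e.\ near $x$, which requires the extra input that $L^0_\capa(T\XX)$ has local dimension at most $n$ $\capa$-a.e.\ --- hence $|\DIFF\chi_E|$-a.e., since $|\DIFF\chi_E|\ll\capa$. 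This is exactly the dimension bound invoked at the end of the proof of Lemma~\ref{rank1lem0}. With these two points added, the rest of your outline is sound and matches the general strategy of the cited proof.
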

}

\begin{rem}\label{convdelta}
Let $(\XX,\dist,\mass)$ be an $\RCD(K,N)$ space of essential dimension $n$, let $x\in\XX$ and let $u=(u^1,\dots,u^n)$ be an $n$-tuple of harmonic functions satisfying 
$$
\lim_{r\searrow 0}\fint_{B_r(x)}|\nabla u^\ell\cdot\nabla u^j-\delta_{\ell j}|\,\diff\mass=0.
$$
Given a sequence of radii $r_i\searrow 0$ such that $$(\XX,r_i^{-1}\dist,\mass_x^{r_i},x)\rightarrow(\RR^n,\dist_e,\underline{\mathcal L}^n,0)$$ and fixed a realization of such convergence, it follows from the results recalled in \cite[Section 1.2.3]{bru2019rectifiability} (see the references therein, see also \cite[(1.22)]{bru2019rectifiability},
consequence of the improved Bochner inequality in \cite{Han14}) that, up to extracting a not relabelled subsequence, the functions in
$$\{r_i^{-1} u^j\}_i\qquad\text{for }j=1,\dots,n $$
converge locally uniformly to orthogonal coordinate functions of $\RR^n$.
\end{rem}

The ensuing result is taken from \cite[Proposition 4.8]{bru2021constancy}.
\begin{prop}\label{prop:conv_good_coord}
Let \((\XX,\dist,\mass)\) be an \(\RCD(K,N)\) space of essential dimension \(n\). Let \(E\subseteq\XX\) be a set of locally finite perimeter.
Then for \(|\DIFF\chi_E|\)-a.e.\ \(x\in\XX\) the following property holds. Suppose that \(u=(u^1,\dots,u^n)\colon B_r(x)\to\RR^n\) is a system
of good coordinates for \(E\) at \(x\). Let \(\nu(x)\in\RR^n\) be as in Definition \ref{defn:good_coordinates}. If the coordinates \((x_\ell)\)
on the (Euclidean) tangent space to \(\XX\) at \(x\) are chosen so that the maps \((u^\ell)\) converge to \((x_\ell)\colon\RR^n\to\RR^n\)
when properly rescaled, then the blow-up \(H\) of \(E\) at \(x\) (in the sense of finite perimeter sets) is
\[
H=\big\{y\in\RR^n\;\big|\;y\cdot\nu(x)\geq 0\big\}.
\]
\end{prop}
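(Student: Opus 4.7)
The blow-up of $E$ at a point $x \in \mathcal{F}E$ is already a half-space $H = \{y \cdot w \geq 0\} \subseteq \RR^n$ with $|w|=1$ (Definition \ref{def:ReducedBoundary}); the task is to show $w = \nu(x)$. Since $\nu(x)$ is defined in Definition \ref{defn:good_coordinates}(ii) via averages of $\nu_E \cdot \nabla u^\ell$ against $|\DIFF\chi_E|$, and both $\nabla u^\ell$ and $|\DIFF\chi_E|$ transfer sensibly to the blow-up, the natural bridge is the Gauss--Green identity combined with a passage to the limit.

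I would first restrict to the full-$|\DIFF\chi_E|$-measure set of $x \in \mathcal{F}E$ at which good coordinates exist, all conclusions of Remark \ref{rem:properties_FE} apply, and $x$ is a Lebesgue point of each $\nu_E \cdot \nabla u^\ell$ against $|\DIFF\chi_E|$ (available via Remark \ref{rmk:per_asympt_doubl}, since $|\DIFF\chi_E|$ is asymptotically doubling). Fix $r_i \searrow 0$. Invoking Remark \ref{rem:properties_FE}(i) and Remark \ref{convdelta}, after subsequences there is a realisation in which $(\XX, r_i^{-1}\dist, \mass_x^{r_i}, x, E) \to (\RR^n, \dist_e, \underline{\mathcal L}^n, 0, H)$ and $r_i^{-1}u^\ell$ converges locally uniformly to orthonormal coordinate functions; by the hypothesis of the proposition these are the $x_\ell$, hence $H = \{y \cdot w \geq 0\}$ for some $|w|=1$. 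Next, I apply the Gauss--Green identity on $\XX$ to the harmonic field $\nabla u^\ell$: for $\eta \in \mathrm{Test}^\infty(\XX)$ supported in $B_{r_x}(x)$,
\begin{equation*}
\int_\XX \eta \, \nabla u^\ell \cdot \nu_E \, d|\DIFF\chi_E| \;=\; -\int_E \nabla \eta \cdot \nabla u^\ell \, d\mass.
\end{equation*}
Taking $\eta$ to be the realisation-pullback of a fixed $\varphi \in \mathrm{Lip}_c(\RR^n)$, rescaling by $r_i^{n-1}$ and passing to the limit (using \eqref{usefullimits} for normalisation), the right-hand side converges to $-\int_H \nabla \varphi \cdot e^\ell \, dy$; this uses $L^1_{\mathrm{loc}}$-convergence $\chi_E \to \chi_H$ together with $L^2_{\mathrm{loc}}$-convergence of the rescaled gradients of $u^\ell$ to $e^\ell$ (a consequence of good coordinates (i) via the improved Bochner inequality recalled in Remark \ref{convdelta}). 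Euclidean Gauss--Green identifies this limit with $w^\ell \int_{\partial H} \varphi \, d\mathcal{H}^{n-1}$. The left-hand side, by the weak convergence of the rescaled perimeter measures to $|\DIFF\chi_H|$ (Remark \ref{rem:properties_FE}(ii)) and the Lebesgue point property, converges to $\nu_\ell(x) \int_{\partial H} \varphi \, d\mathcal{H}^{n-1}$. Matching the two gives $w^\ell = \nu_\ell(x)$ for every $\ell$, i.e.\ $w = \nu(x)$.

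The delicate point is the limit on the left-hand side: $\nu_E \cdot \nabla u^\ell$ is defined only $|\DIFF\chi_E|$-a.e.\ and cannot be literally replaced by the constant $\nu_\ell(x)$ at a fixed scale, so the Lebesgue point property must be combined with the weak convergence of the rescaled perimeters. A clean remedy is a two-scale argument that first estimates the replacement error on $B_r(x)$ using Definition \ref{defn:good_coordinates}(ii) and then lets $\mathrm{diam}(\mathrm{supp}\,\varphi)$ shrink consistently with $r_i$ as $i \to \infty$. One must also carefully track the sign convention from Theorem \ref{thm:Intro}, so that $\nu_E$ corresponds to the Euclidean inward normal of $H$ and the identified half-space is $H = \{y \cdot \nu(x) \geq 0\}$ rather than its complement.
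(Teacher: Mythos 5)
The paper does not actually prove this proposition: it is quoted verbatim from \cite[Proposition 4.8]{bru2021constancy}, so there is no in-paper argument to compare against. Your Gauss--Green-plus-blow-up strategy is the natural proof, and it is sound. One remark: the extra ``two-scale argument'' you propose for the left-hand side is unnecessary. Fix \(\varphi\in{\rm Lip}_c(\RR^n)\) supported in \(B_C(0)\) and let \(\eta_i\) be its pullback (supported in \(B_{Cr_i}(x)\)); then
\[
\frac{r_i}{C_x^{r_i}}\int\eta_i\,\nu_E\cdot\nabla u^\ell\,\diff|\DIFF\chi_E|
=\nu_\ell(x)\cdot\frac{r_i}{C_x^{r_i}}\int\eta_i\,\diff|\DIFF\chi_E|
+\mathrm{err}_i,
\]
with
\[
|\mathrm{err}_i|\le\|\varphi\|_\infty\,\frac{r_i\,|\DIFF\chi_E|(B_{Cr_i}(x))}{C_x^{r_i}}\cdot
\fint_{B_{Cr_i}(x)}|\nu_E\cdot\nabla u^\ell-\nu_\ell(x)|\,\diff|\DIFF\chi_E|;
\]
the prefactor is bounded by item iii) of Remark \ref{rem:properties_FE}, the averaged integral tends to \(0\) by Definition \ref{defn:good_coordinates}(ii), and the main term converges by item ii) of Remark \ref{rem:properties_FE}. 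No shrinking of \({\rm supp}\,\varphi\) is needed. Two technical steps you pass over but which are standard and recoverable from \cite[Section 1.2.3]{bru2019rectifiability} (via \cite{Han14}): (a) \(\eta_i\) is merely Lipschitz, not in \(\TestF(\XX)\), so the Gauss--Green identity should be extended to Lipschitz test functions by density; (b) the convergence \(\chi_E\,(r_i\nabla\eta_i\cdot\nabla u^\ell)\,\mass_x^{r_i}\rightharpoonup\chi_H\,(\nabla\varphi\cdot e^\ell)\,\underline{\mathcal L}^n\) uses that the rescaled \(\nabla u^\ell\) converge strongly in \(L^2\) together with the uniform \(L^\infty\) bound on \(r_i\nabla\eta_i\cdot\nabla u^\ell\). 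Your attention to the sign/orientation convention is well placed and the conventions do close correctly: the outward normal to \(H=\{y\cdot w\ge 0\}\) is \(-w\), so Euclidean Gauss--Green gives \(-\int_H\partial_\ell\varphi=w^\ell\int_{\partial H}\varphi\), matching \(\nu_\ell(x)\int_{\partial H}\varphi\) on the other side.
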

\subsubsection{Splitting maps}
Let us now present the notion of $\delta$-splitting map. We follow closely the presentation in \cite{bru2019rectifiability}, compare with \cite[Definition 3.4]{bru2019rectifiability}.
\begin{defn}[Splitting map]
Let \((\XX,\dist,\mass)\) be an \(\RCD(K,N)\) space. Let \(x\in\XX\), \(k\in\NN\), and \(r,\delta>0\) be given.
Then a map \(u=(u_1,\ldots,u_k)\colon B_r(x)\to\RR^k\) is said to be a \emph{\(\delta\)-splitting map} provided
the following properties hold:
\begin{itemize}
\item[\(\rm i)\)] \(u_\ell\) is harmonic, meaning that, for every $\ell=1,\dots,k$, $u_\ell\in D(\Delta,B_r(x))$ and $\Delta u_\ell=0$, and $u_\ell$ is $C_N$-Lipschitz for every $\ell=1,\dots,k$,
\item[\(\rm ii)\)] \(r^2\dashint_{B_r(x)}|{\rm Hess}(u_\ell)|^2\,\dd\mass\leq\delta\) for every \(\ell=1,\ldots,k\),
\item[\(\rm iii)\)] \(\dashint_{B_r(x)}|\nabla u_\ell\cdot\nabla u_j-\delta_{\ell j}|\,\dd\mass\leq\delta\) for every \(\ell,j=1,\ldots,k\).
\end{itemize}
\end{defn}
As already noticed in \cite[Remark 3.6]{bru2019rectifiability}, in the classical definition of $\delta$-splitting map in the smooth setting, in item i) above the stronger condition $|\nabla u|\leq 1+\delta$ is required. Anyway we stress that when $(\XX,\dist,\mass)$ is an $\RCD(-\delta,N)$ space and $u$ is a $\delta$-splitting map as above, we have that $\sup_{y\in B_{r/2}(x)}|\nabla u|(y)\leq 1+C_N\delta^{1/2}$, see \cite[Remark 3.3]{BrueNaberSemola20}, and compare with \cite[Equations (3.42)--(3.46)]{ChNa15}. This means that, for $\delta$ small enough, if $u$ is a $\delta$-splitting map on $B_r(x)$ on an $\RCD(-\delta,N)$ space as above, then it is a $C_N\delta^{1/2}$-splitting map on $B_{r/2}(x)$ in the classical smooth sense.

In the following Lemma we slightly improve previous results obtained in \cite{bru2019rectifiability, bru2021constancy}, and we show that we can find good coordinates with respect to \textit{every} $\mathrm{BV}_{\mathrm{loc}}$ function.
\begin{lem}\label{lemsplitting}
Let $(\XX,\dist,\mass)$ be an $\RCD(K,N)$ space of essential dimension $n$ and $\eta\in(0,1)$. Then there exists a sequence of $n$-tuples
of harmonic \(C_{K,N}\)-Lipschitz maps $\{u_k\}_k$,
$$
u_k=(u_k^1,\dots,u_k^n):B_{2r_k}(x_k)\rightarrow\RR^n,
$$
and a sequence of pairwise disjoint Borel sets $\{D_k\}_k$ with $D_k\subseteq B_{r_k}(x_k)$ such that 
\begin{enumerate}[label=\roman*)]
\item for every $f\in\BV_{\mathrm{loc}}(\XX)$, $$\abs{\DIFF f}\left(\XX\setminus\bigcup_k D_k\right)=0,$$
\item for every $x\in D_k$, $u_k$ is an $\eta$-splitting map on $B_r(x)$, for any $r\in (0,r_k)$.
\item there exists a Borel matrix-valued map
$M=(M_{\ell,j}):D_k\rightarrow\RR^{n\times n}$ satisfying
\begin{equation}\label{eqn:ELEBESGUE}
\lim_{r\searrow 0}\dashint_{B_r(x)} |\nabla u^\ell_k\,\cdot\,\nabla u^j_k-M(x)_{\ell,j}|\dd{\mass}=0.
\end{equation}

\end{enumerate} 
\end{lem}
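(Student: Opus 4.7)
The plan combines the existence of harmonic splitting maps at every regular point with a Vitali-type iterative covering argument, exploiting a quantitative propagation of the splitting property.

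First, I would recall that at every $x \in \mathcal{R}_n^*$ the tangent is $(\RR^n, \dist_e)$, so by pulling back coordinate functions along the pmGH convergence $(\XX, r^{-1}\dist, \mass_x^r, x) \to (\RR^n, \dist_e, \underline{\mathcal{L}}^n, 0)$ and then solving a Dirichlet problem with these approximate boundary data, one constructs, for every $\delta > 0$ and at arbitrarily small radii $r > 0$, harmonic $C_{K,N}$-Lipschitz maps $u \colon B_{2r}(x) \to \RR^n$ that are $\delta$-splitting on $B_{2r}(x)$; this is the construction implicit in Remark \ref{convdelta} and already used in \cite{bru2019rectifiability, bru2021constancy}. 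Second, I would invoke a quantitative propagation lemma (in the spirit of \cite{ChNa15} adapted to $\RCD$ via the improved Bochner inequality of \cite{Han14}): for every $\eta > 0$ there exists $\delta = \delta(\eta, K, N) > 0$ such that, given any $\delta$-splitting map $u \colon B_{2r}(x) \to \RR^n$, the Borel set
\[
G_u \defeq \bigl\{y \in B_r(x) \st u \text{ is an $\eta$-splitting map on } B_s(y) \text{ for every } s \in (0, r)\bigr\}
\]
satisfies $\mass(B_r(x) \setminus G_u) \le \omega(\delta) \mass(B_r(x))$, with $\omega(\delta) \downarrow 0$ as $\delta \downarrow 0$. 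This relies on maximal function bounds for $|{\rm Hess}(u^\ell)|^2$ and for the defects $|\nabla u^\ell \cdot \nabla u^j - \delta_{\ell j}|$.

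Given these two ingredients, I would perform a Vitali iteration. The family of balls $B_r(x)$ with $x \in \mathcal{R}_n^*$ carrying a $\delta$-splitting map on $B_{2r}(x)$ is a fine cover of $\mathcal{R}_n^*$. By Vitali's covering theorem I would extract a pairwise disjoint countable subfamily $\{B_{r_k^{(1)}}(x_k^{(1)})\}_{k \in \NN}$ with associated splitting maps $u_k^{(1)}$ and good sets $G_k^{(1)}$; then set $D_k^{(1)} \defeq G_k^{(1)} \setminus \bigcup_{j < k} G_j^{(1)}$, iterate the construction on the residual set $\mathcal{R}_n^* \setminus \bigcup_k D_k^{(1)}$ at strictly finer scales, and concatenate the outputs into a single sequence $\{(u_k, x_k, r_k, D_k)\}_{k \in \NN}$. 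Finally, I would intersect each $D_k$ with the Borel subset of $\mass$-Lebesgue points of the locally bounded functions $\nabla u_k^\ell \cdot \nabla u_k^j$, which has full $\mass$-measure by the uniform local doubling of $\mass$; this yields the matrix-valued map $M$ of item iii).

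For item i), I would invoke the forthcoming Theorem \ref{thm:const_dim_cod1} to reduce the claim to showing that the iterative covering captures $\mathcal{R}_n^*$ up to a set that is $|\DIFF f|$-null for every $f \in \BV_{\mathrm{loc}}(\XX)$. By Proposition \ref{zuppa}, extended from sets of finite perimeter to $\BV$ functions via the coarea formula, $|\DIFF f|$ is asymptotically doubling $|\DIFF f|$-almost everywhere, so Vitali-type differentiation holds for $|\DIFF f|$ as well; since the fine-cover property above holds at every point of $\mathcal{R}_n^*$ (hence $|\DIFF f|$-almost everywhere), the iteration covers $|\DIFF f|$-almost every point. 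The main obstacle is the propagation step: one must transfer an $L^2$-in-space defect bound at the initial scale to a uniform-in-scale pointwise-in-center bound outside a small exceptional set, which requires sharp maximal function estimates for $|{\rm Hess}(u^\ell)|^2$ and for $|\nabla u^\ell \cdot \nabla u^j - \delta_{\ell j}|$ in the $\RCD$ framework based on the Bochner identity.
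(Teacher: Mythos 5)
Your overall architecture (construct splitting maps at small scales, propagate the $\delta$-splitting property outside a small bad set, Vitali cover, reduce to sets of finite perimeter by coarea) matches the spirit of the paper's argument, but your propagation lemma is stated in the wrong measure, and this is a genuine gap. You claim
\[
\mass\bigl(B_r(x)\setminus G_u\bigr)\le\omega(\delta)\,\mass\bigl(B_r(x)\bigr),
\]
i.e.\ the exceptional set is small relative to the \emph{ambient} measure $\mass$. But the target conclusion is that $\XX\setminus\bigcup_k D_k$ is $|\DIFF f|$-null, and after the coarea reduction this means $|\DIFF\chi_E|$-null for every set of finite perimeter $E$. The measure $|\DIFF\chi_E|$ lives in codimension one: typically $\mass(\mathcal F E)=0$, so a set can have arbitrarily small $\mass$-measure while carrying \emph{all} of $|\DIFF\chi_E|$. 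Your $\mass$-estimate therefore gives no control on $|\DIFF\chi_E|(B_r(x)\setminus G_u)$, and the Vitali argument that follows cannot close. What is needed — and what the paper actually uses, via \cite[Corollary~3.12]{bru2019rectifiability} — is a bound on the bad set in the codimension-one Hausdorff premeasure,
\[
\HH^h_5\bigl(B_{5r/4}(y)\setminus D^\epsilon_{y,r}\bigr)\le C_N\,\delta^{1/2}\,\frac{\mass(B_{5r/2}(y))}{5r/2},
\]
which is the quantity mutually absolutely continuous with the perimeter on $\mathcal F E$ (cf.\ the representation formula). Replacing your propagation lemma by this $\HH^h$-estimate is not a cosmetic change: it is the step that was specifically developed for $\RCD$ spaces in \cite{bru2019rectifiability} and is the technical heart of the lemma.

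A second, related slip occurs at the end, in item iii). You propose to shrink each $D_k$ by intersecting with the set of $\mass$-Lebesgue points of $\nabla u_k^\ell\cdot\nabla u_k^j$. This removes a $\mass$-null set, but a $\mass$-null set may carry positive $|\DIFF f|$-mass, so this restriction is not safe for item i). The paper avoids this by observing that, since each $u_k^\ell$ is harmonic, the products $\nabla u_k^\ell\cdot\nabla u_k^j$ admit a pointwise-defined representative (via elliptic regularity/continuity, cf.\ \cite[Remark~2.10]{bru2021constancy}), so the limit in \eqref{eqn:ELEBESGUE} exists at \emph{every} point of $D_k$ with no further exceptional set to discard.

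Apart from these two points, the blueprint — dense set of base points, splitting maps from \cite[Corollary~3.10]{bru2019rectifiability}, propagation, Vitali covering, reduction to perimeter via coarea, iteration over $\epsilon_i\searrow 0$ — is essentially the one the paper follows (the paper phrases the covering step as a proof by contradiction against a putative set of finite perimeter missing the $D_k$, but that is a presentational difference).
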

To any such collection of \(\eta\)-splitting maps, we can therefore associate a natural map $$\bigcup_k D_k\rightarrow \NN\qquad x\mapsto k(x).$$
\begin{proof}
The proof follows the arguments given in the proof of \cite[Theorem 3.2]{bru2019rectifiability}. However, as we need a slightly stronger statement,
we include the details of the proof.

Fix a countable dense set $S\subseteq\Reg_n$. Let $y\in S$ be given. If $\epsilon>0$ is small enough and $r\in (0,\sqrt{\epsilon/\abs{K}})\cap \QQ$
is such that 
$$
\dist_{\rm pmGH}\big((\XX,r^{-1}\dist,\mass_y^r,y),(\RR^n,\dist_e,\underline{\mathcal L}^n,0)\big) <\epsilon,
$$
then, by \cite[Corollary 3.10]{bru2019rectifiability},
we obtain a $\delta$-splitting map $u_{y,r}:B_{5 r}(y)\rightarrow\RR^n$ for some $\delta$ (which can be made arbitrarily small, taking $\epsilon$ small enough). 
Let $$D_{y,r}\defeq\big\{x\in B_{5/4 r}(y)\;\big|\;u_{y,r}\text{ is an $\eta$-splitting map on $B_s(x)$ for every $s\in (0,5/4 r)$}\big\}.$$
The claim of the lemma will be proved with the sequence of sets $\{D_{y,r}\}_{y,r}$ and maps $\{u_{y,r}\}_{y,r}$, after having made the sets disjoint and restricted the maps.

Assume now, by contradiction, that the claim is false. Then, using a locality argument and the coarea formula, we find a set of finite perimeter $E\subseteq\XX$ such that \begin{equation}\label{contra}
	\abs{\DIFF \chi_E}\bigg(\XX\setminus\bigcup_{y,r}D_{y,r}\bigg)>0.
\end{equation}
Fix $\epsilon>0$ to be determined later. 
  If $x\in \FF E$, then there exists $r=r(x)\in\QQ\cap (0,1)$ such that $\abs{K} r^2< \epsilon<4$ and
$$
\dist_{\rm pmGH}\big((\XX,r^{-1}\dist,\mass_x^r,x),(\RR^n,\dist_e,\underline{\mathcal L}^n,0)\big) <\epsilon
\qquad\text{and}\qquad
\frac{r\abs{\DIFF \chi_{E}}(B_{r/4}(x))}{\mass(B_{r/4}(x))}>2\frac{\omega_{n-1}}{\omega_n}.
$$
By density of \(S\) and thanks to an easy continuity argument, we deduce that for some point $y=y(x)\in S\cap B_{r/2}(x)$,
\begin{equation}\label{eqn:AccorciamoLeDistanze}
\dist_{\rm pmGH}\big((\XX,r^{-1}\dist,\mass_{y}^r,y),(\RR^n,\dist_e,\underline{\mathcal L}^n,0)\big) <\epsilon,
\qquad\text{and}\qquad
\frac{r\abs{\DIFF \chi_{E}}(B_{r/4}(y))}{\mass(B_{r/4}(y))}>2\frac{\omega_{n-1}}{ \omega_n}.
\end{equation}
By the discussion above (that is, \cite[Corollary 3.10]{bru2019rectifiability}),
we obtain a $\delta$-splitting map $u_{y,r}:B_{5 r}(y)\rightarrow\RR^n$ for some $\delta=\delta(\epsilon)$ (which can be made arbitrarily small, taking $\epsilon$ small enough). By \cite[Corollary 3.12]{bru2019rectifiability}, $u_{y,r}$ is a $C_N \delta^{1/4}$-splitting map on $B_s(x)$ for any $x\in D_{y,r}^\epsilon\subseteq B_{5/4 r }(y)$ and $s\in(0,5/4 r)$, where $$\HH^h_5(B_{5/4 r}(y)\setminus D^\epsilon_{y,r})\le C_N\delta^{1/2}\frac{\mass(B_{5/2 r}(x))}{5/2 r}.$$
Therefore, $D^\epsilon_{y,r}\subseteq D_{y,r}$ if $C_N\delta^{1/4}<\eta$.

We apply Vitali covering lemma to the family $\{B_{r(x)/4}(y(x))\}_{x\in \FF E}$ constructed arguing as above and we obtain a sequence of disjoint balls $\{B_{r(x_i)/4}(y(x_i))\}_i$ such that $$\FF E\subseteq \bigcup_i B_{5/4 r(x_i)}(y(x_i)).$$ Set $$D^\epsilon\defeq \bigcup_{i} D^\epsilon_{y(x_i),r(x_i)}.$$
Following the computations in the proof of \cite[Theorem 3.2]{bru2019rectifiability},
we obtain that 
\begin{equation}
\begin{split}
\HH^h_5(\FF E\setminus D^\epsilon)&\leq \sum_{i\in\mathbb N}\mathcal{H}^h_5\left(B_{5/4r(x_i)}(y(x_i))\setminus D^{\epsilon}_{y(x_i),r(x_i)}\right)\\
&\leq C_N\delta^{1/2}\sum_{i\in\mathbb N}\frac{\mass(B_{5/2r(x_i)}(y(x_i)))}{5/2r(x_i)}\\
&\leq C_{N}\delta^{1/2}\sum_{i\in\mathbb N}\frac{\mass(B_{r(x_i)/4}(y(x_i)))}{r(x_i)/4} \\
&\le C_N \delta^{1/2}|\DIFF\chi_E|(\XX),
\end{split}
\end{equation}
where the constants $C_N$ may change from line to line,
in the third inequality we are using the doubling property together with the fact that $r(x_i)$ is sufficiently small, and in the last inequality we are using \eqref{eqn:AccorciamoLeDistanze} together with the fact that $\{B_{r(x_i)/4}(y(x_i))\}$ are disjoint.
Let now $\{\epsilon_i\}_i$ with $\epsilon_i\searrow 0$ be such that the corresponding $\{\delta_i\}_i$ satisfy both $\delta_i^{1/2} \le 2^{-i}$ and $C_N\delta_i^{1/4}<\eta$, and set 
$$ G\defeq \bigcup_{i} D^{\epsilon_i}\subseteq D_{y,r}.$$
Then $\HH_5^h( \FF E\setminus G) =0$, which contradicts \eqref{contra}.

Finally, item iii) is a direct consequence of the fact that, since $u^\ell_k$ is harmonic for every $\ell=1,\dots,n$ and $k\in\mathbb N$, one can give a pointwise meaning to $\nabla u^\ell_k(x)\cdot \nabla u^j_k(x)$, compare with \cite[Remark 2.10]{bru2021constancy}. 
\end{proof}
\begin{defn}\label{defn:GoodCollection}
Let \((\XX,\dist,\mass)\) be an \(\RCD(K,N)\) space having essential dimension \(n\). Then by a \emph{good collection of splitting maps} on \(\XX\)
we mean a family \(\big\{\boldsymbol u_\eta\,:\,\eta\in(0,n^{-1})\cap\QQ\big\}\) of sequences \(\boldsymbol u_\eta=(u_{\eta,k})_{k\in\NN}\) of
maps \[u_{\eta,k}=(u_{\eta,k}^1,\dots,u_{\eta,k}^n)\colon B_{r_{\eta,k}}(x_{\eta,k})\to\RR^n\] as in Lemma \ref{lemsplitting}. We will denote by
\(D_{\eta,k}\subseteq B_{r_{\eta,k}}(x_{\eta,k})\) the sets associated to \(\boldsymbol u_\eta\) as in Lemma \ref{lemsplitting}. We denote
\[
D_\eta := \bigcup_{k=1}^\infty D_{\eta,k}
\]
and by \(k_\eta(x):D_\eta\rightarrow\NN\) the unique index satisfying \(x\in D_{\eta,k_\eta(x)}\). For every $x\in D_{\eta,k}$ we denote by $A_\eta(x)$ a matrix $A_\eta(x)\in\RR^{n\times n}$ such that, with the same notation of Lemma \ref{lemsplitting}, $A_\eta(x) M_\eta(x) A_\eta(x)^{T}={{\rm Id}_{n\times n}}.$
The existence of such a matrix follows from the choice of $\bar{\eta}_n$. Indeed, from the construction of the symmetric matrix $B_\eta(x)$
it follows that $ \Vert {\rm Id} - M_\eta(x)\Vert_{L^\infty}<n^{-1}$, thus $ \Vert {\rm Id} - M_\eta(x)\Vert_{\rm {op}}<1$ so that the conclusion follows from the spectral theorem.
\end{defn}

Notice that for every $f\in\mathrm{BV}(\XX)$, we have that $|\DIFF f|(\XX\setminus D_\eta)=0$. Let us fix $\eta\in (0,n^{-1})\cap\mathbb Q$. Since for every $x\in D_\eta$ there exists a unique $k_\eta(x)$ such that $x\in D_{\eta,k_\eta(x)}$, and since there exists also a splitting map $u_{\eta,k_\eta(x)}$ on some ball around $x$, one has that the limit 
\[
\lim_{r\to 0}\fint_{B_r(x)}\nabla u^\ell_{\eta,k_{\eta(x)}}\cdot \nabla u^j_{\eta,k_{\eta(x)}}\de \mass,
\]
exists for every $\ell,j\in \{1,\dots,n\}$, compare the end of the proof of \Cref{lemsplitting} and \cite[Remark 2.10]{bru2021constancy}. Hence, for every $\eta\in (0,n^{-1})\cap\mathbb Q$, one can give a pointwise meaning to the $\mathbb R^{n\times n}$-valued map 
\begin{equation}\label{eqn:MAPPAGRADIENTI}
M_\eta:x\in D_\eta\mapsto (\nabla u^\ell_{\eta,k_{\eta(x)}}\cdot \nabla u^j_{\eta,k_{\eta(x)}})_{\ell,j\in \{1,\dots,n\}}(x),
\end{equation}
such that \eqref{eqn:ELEBESGUE} holds.
\section{Main results}\label{sec:Main}
\subsection{Representation formula for the perimeter}\label{sec:Representation}
In this section we prove, by exploiting \cite{deng2020holder} and the same argument of \cite{bru2021constancy}, that the total variation of every $\mathrm{BV}$ function is concentrated on $\mathcal{R}_n^*$. We use the latter information to deduce that perimeter measure of every set of locally finite perimeter is mutually absolutely continuous with respect to $\mathcal{H}^{n-1}$. 
{ We will be using the following theorem, which is proved in \cite[Theorem 1.3]{deng2020holder}.
\begin{thm}\label{deng2020holderthm}
Let $(\XX,\dist,\mass)$ be an $\RCD(K,N)$ space with $K\in\mathbb R$ and $N\geq 1$, and $\mathrm{supp}(\mass)=\XX$. Then $(\XX,\dist,\mass)$ is non-branching, i.e.\ if $\gamma,\sigma:[0,L]\to\XX$ are two unit speed geodesics satisfying $\gamma(0)=\sigma(0)$, and $\gamma(t_0)=\sigma(t_0)$ for some $t_0\in (0,L)$, then $\gamma=\sigma$.
\end{thm}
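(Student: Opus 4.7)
The plan, following Deng, is to combine two tools: the essentially non-branching property of $\RCD(K,N)$ spaces established by Rajala--Sturm, and a new regularity statement for tangent cones along interior points of geodesics. Neither tool alone suffices: essentially non-branching only constrains $W_2$-optimal dynamical plans with absolutely continuous marginals, while the tangent-cone regularity gives local rigidity but no immediate global uniqueness. The heart of the proof is the interplay between the two.

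First I would establish the key quantitative statement: for every unit-speed geodesic $\gamma\colon[0,L]\to \XX$ and every $\delta\in(0,L/2)$ there exist constants $C=C(K,N,\delta)$ and an exponent $\alpha=\alpha(N)\in(0,1)$ such that, for all $s,t\in[\delta,L-\delta]$ and all sufficiently small scales $r>0$,
\[
\dist_{\rm pmGH}\bigl((\XX,r^{-1}\dist,\mass_{\gamma(s)}^r,\gamma(s)),(\XX,r^{-1}\dist,\mass_{\gamma(t)}^r,\gamma(t))\bigr)\leq C|s-t|^\alpha.
\]
In particular, $\Tan_{\gamma(t)}(\XX,\dist,\mass)$ is a singleton for every $t\in(0,L)$ and depends Hölder continuously on $t$. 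The proof of this estimate follows the Colding--Naber strategy adapted to the synthetic setting: one studies integral quantities along $\gamma$ built from harmonic approximations of the distance function and their heat-flow regularizations, and controls their time-variation via Bochner's inequality, crucially using Han's improved Bochner inequality (with the Hessian term) available on $\RCD(K,N)$ spaces.

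Next I would argue by contradiction. Suppose $\gamma,\sigma\colon[0,L]\to \XX$ are distinct unit-speed geodesics with $\gamma(0)=\sigma(0)=p$ and $\gamma(t_0)=\sigma(t_0)=q$, $t_0\in(0,L)$. Then $\gamma|_{[0,t_0]}$ and $\sigma|_{[0,t_0]}$ are two length-minimizers between $p$ and $q$ that differ at some interior time $t^\ast\in(0,t_0)$. Applied to interior points, the Hölder regularity above forces $\Tan_{\gamma(t^\ast)}(\XX,\dist,\mass)$ to split isometrically as $\RR\times Y$ along the geodesic direction (via the splitting theorem applied to the line in the tangent cone), and similarly for $\sigma$ at $t^\ast$. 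I would then transport a small absolutely continuous probability $\mu$ supported in a tiny neighbourhood of $p$ to an absolutely continuous $\nu$ supported in a tiny neighbourhood of $q$; the geodesic structure of $\gamma$ and $\sigma$ lifts, via a standard gluing, to two distinct $W_2$-optimal dynamical plans between $\mu$ and $\nu$ whose supports contain families of geodesics that branch on a set of positive measure, contradicting essentially non-branching.

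The main obstacle is the first step, namely the Hölder continuity of tangent cones: the transfer of the Colding--Naber estimates from the Ricci-limit framework to the synthetic $\RCD$ setting requires a careful implementation of heat-flow regularization, good cutoff test functions from \cite{AmbrosioMondinoSavare13-2,Mondino-Naber14}, and a delicate handling of the Hessian term in the improved Bochner inequality; this is the technical heart of Deng's argument. Once this estimate is in place, the essentially non-branching property of Rajala--Sturm converts the local rigidity into the desired global non-branching statement.
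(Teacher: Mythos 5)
The paper does not prove this theorem at all: it is invoked as a black box, with the single sentence ``which is proved in \cite[Theorem 1.3]{deng2020holder}''. So there is no internal argument to compare yours against; what you have written is a reconstruction of Deng's proof, and it should be judged as such. You have correctly identified the two ingredients: the Colding--Naber H\"older continuity of (normalised, rescaled) geometry along interior points of a geodesic, transplanted to $\RCD(K,N)$ via heat-flow regularisation, good cut-off functions, and the improved Bochner inequality; and the Rajala--Sturm essentially non-branching property. Your first block is a fair description of the technical heart of \cite{deng2020holder}.

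Your contradiction step, however, has two genuine gaps. First, the deduction ``Then $\gamma|_{[0,t_0]}$ and $\sigma|_{[0,t_0]}$ are two length-minimizers between $p$ and $q$ that differ at some interior time $t^\ast\in(0,t_0)$'' is unwarranted: $\gamma$ and $\sigma$ may coincide on all of $[0,t_0]$ and only diverge at some $t_1\geq t_0$. That is precisely the classical branching configuration, and it is not reducible to the one you treat by a reparametrisation trick; both configurations must be handled. Second, and more seriously, producing ``two distinct $W_2$-optimal dynamical plans'' between absolutely continuous marginals does not contradict essentially non-branching. Essentially non-branching asserts that \emph{every} optimal dynamical plan between absolutely continuous endpoints is concentrated on a non-branching set of geodesics; it says nothing about uniqueness of plans, and the union of the supports of two different non-branching plans can perfectly well contain a branching pair. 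What a contradiction actually requires is a \emph{single} optimal plan that charges a positive-measure family of branching geodesics, and it is far from clear that the single branching pair $(\gamma,\sigma)$ can be ``thickened'' into such a family: the H\"older estimate on tangent cones must be used quantitatively (as Deng does) to propagate the branching to a positive-measure set of nearby geodesics in a fixed plan, or one has to argue in a fundamentally different way (e.g.\ via rigidity of the tangent cone at the branch point forcing a splitting incompatible with the cone structure). As written, this is where your sketch would fail.
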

}
\begin{prop}\label{prop:key_prop_const_dim}
Let \((\XX,\dist,\mass)\) be an \(\RCD(K,N)\) space having essential dimension \(n\). Suppose that \(\gamma\colon[0,1]\to\XX\) is a geodesic
satisfying \(\gamma_t\in\mathcal R_n^*\) for a dense family of \(t\in(0,1)\). Then it holds that \(\gamma_t\in\mathcal R_n^*\) for every \(t\in(0,1)\).
\end{prop}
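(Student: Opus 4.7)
The plan is to leverage the Hölder continuity of tangent cones along geodesics in $\RCD(K,N)$ spaces proved in~\cite{deng2020holder}, which is explicitly flagged in the introduction of \Cref{sec:Representation} as the starting point for this argument. Concretely, Deng's result yields that at every interior point $\gamma_t$, $t\in(0,1)$, the tangent cone is unique and the assignment $t\mapsto\mathrm{Tan}_{\gamma_t}(\XX,\dist,\mass)$ varies continuously (in fact $\tfrac12$-Hölder) in the pmGH topology, together with a companion continuity statement for the $n$-density.

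First I would transfer the Euclidean-tangent condition from the dense set to all of $(0,1)$. Fix $t_0\in(0,1)$ and, by the density assumption, pick $t_j\to t_0$ with $\gamma_{t_j}\in\mathcal R_n^*$, so that the unique tangent at each $\gamma_{t_j}$ is $(\RR^n,\dist_e,\underline{\mathcal L}^n,0)$. The Hölder continuity of tangent cones along the geodesic forces the (unique) tangent at $\gamma_{t_0}$ to be the pmGH limit of these, which is again $(\RR^n,\dist_e,\underline{\mathcal L}^n,0)$. Hence $\mathrm{Tan}_{\gamma_{t_0}}(\XX,\dist,\mass)=\{(\RR^n,\dist_e,\underline{\mathcal L}^n,0)\}$ and $\gamma_{t_0}\in\mathcal R_n$.

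Second, I would show $\Theta_n(\mass,\gamma_{t_0})\in(0,+\infty)$. Uniqueness of the Euclidean tangent at $\gamma_{t_0}$ together with the weak convergence $\mass_{\gamma_{t_0}}^r\rightharpoonup\underline{\mathcal L}^n$ as $r\searrow 0$ forces the limit $\Theta_n(\mass,\gamma_{t_0})=\lim_{r\searrow 0}\mass(B_r(\gamma_{t_0}))/(\omega_n r^n)$ to exist (and not only as a $\varlimsup$ or $\varliminf$), essentially by comparing this scaling with that of the rescaling constants $C^r_{\gamma_{t_0}}$ on the normalising family. To rule out the degenerate cases $\Theta_n=0$ or $\Theta_n=+\infty$, I would invoke the (Hölder) continuity of the density $t\mapsto\Theta_n(\mass,\gamma_t)$ along the geodesic, a companion of Deng's tangent-cone result in~\cite{deng2020holder}: the sequence $\Theta_n(\mass,\gamma_{t_j})\in(0,+\infty)$ then converges to a positive finite limit that must coincide with $\Theta_n(\mass,\gamma_{t_0})$.

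The main obstacle is the density step. Uniqueness and Euclideanness of the tangent cone do not by themselves preclude degeneration of $\Theta_n$ to $0$ or $+\infty$ in the limit $t_j\to t_0$, so the argument genuinely relies on the finer Hölder-type control of the density along the geodesic supplied by~\cite{deng2020holder}. A fallback, were one to avoid this refinement, would be to apply the volume convergence of~\cite{DPG17} to the rescaled spaces centred at the $\gamma_{t_j}$'s with $\Theta_n(\mass,\gamma_{t_j})\in(0,+\infty)$, and to pass from the dense subset to $t_0$ via a diagonal extraction coupled with Bishop--Gromov monotonicity applied at $\gamma_{t_0}$.
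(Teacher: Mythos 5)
Your strategy is essentially the paper's: invoke the H\"older continuity along geodesics from \cite{deng2020holder} to propagate membership in $\mathcal R_n^*$ from the dense set of parameters to all interior ones. Two remarks on fidelity. First, the paper begins by appealing to the non-branching result \cite[Theorem 1.3]{deng2020holder}, recalled as Theorem~\ref{deng2020holderthm}, to ensure that the restricted geodesic is the unique geodesic between its endpoints, since this is a hypothesis of Deng's quantitative estimate; your plan omits this step. Second, the paper's written proof concerns exclusively the density condition and does not spell out the propagation of the Euclidean-tangent property; your explicit treatment of the latter via Deng's H\"older continuity of tangent cones is a sensible complement.

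Your density step, however, contains a genuine error. You claim that uniqueness of the Euclidean tangent at $\gamma_{t_0}$, together with $\mass_{\gamma_{t_0}}^r\rightharpoonup\underline{\mathcal L}^n$, forces $\lim_{r\searrow 0}\mass(B_r(\gamma_{t_0}))/(\omega_n r^n)$ to exist ``by comparing this scaling with that of the rescaling constants $C^r_{\gamma_{t_0}}$''. This inference is false: pmGH convergence to $(\RR^n,\dist_e,\underline{\mathcal L}^n,0)$ controls the ratio $\mass(B_r(\gamma_{t_0}))/C^r_{\gamma_{t_0}}$, but gives no control on $C^r_{\gamma_{t_0}}/r^n$, which may oscillate as $r\searrow 0$. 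This is precisely why $\mathcal R_n^*$ may be a proper subset of $\mathcal R_n$, and why the $\mass$-negligibility of $\XX\setminus\mathcal R_n^*$ (\cite[Theorem 4.1]{AHT17}) is a theorem rather than a triviality. Fortunately, the fallback you mention is exactly the paper's argument, and it does not need your preliminary existence claim: the quantitative estimate
\[
\left|\frac{\mass(B_r(\gamma_s))}{\mass(B_r(\gamma_{s'}))}-1\right|\le C\,|s-s'|^{\frac{1}{2(1+2N)}},\qquad r\in(0,\bar r),\ |s-s'|<\varepsilon,
\]
from \cite[Eq.\ (166)]{deng2020holder} yields in one stroke that $\overline\Theta_n(\mass,\gamma_{t_0})=\underline\Theta_n(\mass,\gamma_{t_0})\in(0,+\infty)$, by plugging in radii realising the $\varlimsup$ and $\varliminf$ of $\mass(B_r(\gamma_{t_0}))/(\omega_n r^n)$, letting $t_j\to t_0$ along the dense set, and treating separately the cases in which $\Theta_n(\mass,\gamma_{t_j})$ stays bounded or blows up. Drop the flawed existence-via-tangent-uniqueness claim and let the density step rest entirely on this estimate.
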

\begin{proof}
Let \(\delta\in(0,1/20)\) be fixed. { \Cref{deng2020holderthm}
ensures that the constant-speed reparametrisation of \(\gamma|_{[\delta/2,1-\delta/2]}\) on \([0,1]\) is the unique geodesic between its endpoints.
Then \cite[Eq.\ (166)]{deng2020holder}} gives \(\varepsilon=\varepsilon(N,\delta)>0\),
\(\bar r=\bar r(N,\delta)>0\), and \(C=C(N,\delta)>0\) such that
\[
\bigg|\frac{\mass(B_r(\gamma_s))}{\mass(B_r(\gamma_{ s'}))}-1\bigg|\leq C|s-s'|^\frac{1}{2(1+2N)},\quad\text{ for every }r\in(0,\bar r)
\text{ and }s, s'\in[\delta,1-\delta]\text{ with }|s- s'|<\varepsilon.
\]
In particular, for any \(s,s'\in[\delta,1-\delta]\) with \(|s- s'|<\varepsilon\) we have that
\begin{equation}\label{eq:Hold_cont}
\bigg|\frac{\mass(B_r(\gamma_s))}{\omega_n r^n}\bigg(\frac{\mass(B_r(\gamma_{ s'}))}{\omega_n r^n}\bigg)^{-1}-1\bigg|\leq C|s- s'|^{\frac{1}{2(1+2N)}},
\quad\text{ for every }r\in(0,\bar r).
\end{equation}
Now let \(t\in[\delta,1-\delta]\) be fixed and choose a sequence
\((t_i)_{i\in\NN}\subseteq\gamma^{-1}(\mathcal R_n^*)\cap[\delta,1-\delta]\cap(t-\varepsilon,t+\varepsilon)\) such that \(t_i\to t\).
Up to a not relabelled subsequence, we can assume that \(\Theta_n(\mass,\gamma_{t_i})\to\lambda\) for some \(\lambda\in[0,+\infty]\).
Pick sequences \((r_j)_{j\in\NN},(\tilde r_j)_{j\in\NN}\subseteq(0,\bar r)\) such that \(\frac{\mass(B_{r_j}(\gamma_t))}{\omega_n r_j^n}\to
\overline\Theta_n(\mass,\gamma_t)\) and \(\frac{\mass(B_{\tilde r_j}(\gamma_t))}{\omega_n\tilde r_j^n}\to\underline\Theta_n(\mass,\gamma_t)\).
Plugging \((s, s',r)=(t,t_i,r_j)\) or \((s, s',r)=(t,t_i,\tilde r_j)\) in \eqref{eq:Hold_cont}, and letting \(j\to\infty\), we deduce that \(\overline\Theta_n(\mass,\gamma_t)<+\infty\) and
\begin{equation}\label{eq:conseq_Hold_1}
\bigg|\frac{\overline\Theta_n(\mass,\gamma_t)}{\Theta_n(\mass,\gamma_{t_i})}-1\bigg|,
\bigg|\frac{\underline\Theta_n(\mass,\gamma_t)}{\Theta_n(\mass,\gamma_{t_i})}-1\bigg|\leq C|t-t_i|^{\frac{1}{2(1+2N)}},\quad\text{ for every }i\in\NN.
\end{equation}
Similarly, plugging \((s, s',r)=(t_i,t,r_j)\) or \((s, s',r)=(t_i,t,\tilde r_j)\) in \eqref{eq:Hold_cont}, and letting \(j\to\infty\),
we deduce that \(\underline\Theta_n(\mass,\gamma_t)>0\) and
\begin{equation}\label{eq:conseq_Hold_2}
\bigg|\frac{\Theta_n(\mass,\gamma_{t_i})}{\overline\Theta_n(\mass,\gamma_t)}-1\bigg|,
\bigg|\frac{\Theta_n(\mass,\gamma_{t_i})}{\underline\Theta_n(\mass,\gamma_t)}-1\bigg|\leq C|t-t_i|^{\frac{1}{2(1+2N)}},\quad\text{ for every }i\in\NN.
\end{equation}
Observe that \eqref{eq:conseq_Hold_1} and \eqref{eq:conseq_Hold_2} imply, respectively, that for every \(i\in\NN\) it holds that
\begin{subequations}\begin{align}
\label{eq:conseq_Hold_3}
\big|\overline\Theta_n(\mass,\gamma_t)-\underline\Theta_n(\mass,\gamma_t)\big|&\leq 2 C|t-t_i|^{\frac{1}{2(1+2N)}}\Theta_n(\mass,\gamma_{t_i}),\\
\label{eq:conseq_Hold_4}
\big|\overline\Theta_n(\mass,\gamma_t)-\underline\Theta_n(\mass,\gamma_t)\big|&\leq 2 C|t-t_i|^{\frac{1}{2(1+2N)}}
\frac{\overline\Theta_n(\mass,\gamma_t)\underline\Theta_n(\mass,\gamma_t)}{\Theta_n(\mass,\gamma_{t_i})}.
\end{align}\end{subequations}
Hence, we can conclude that \(\overline\Theta_n(\mass,\gamma_t)=\underline\Theta_n(\mass,\gamma_t)\) by letting \(i\to\infty\) in \eqref{eq:conseq_Hold_3}
if \(\lambda<+\infty\), or in \eqref{eq:conseq_Hold_4} if \(\lambda=+\infty\). This shows that \(\gamma_t\in\mathcal R_n^*\) for every \(t\in[\delta,1-\delta]\).
Thanks to the arbitrariness of \(\delta\), we proved that \(\gamma_t\in\mathcal R_n^*\) for every \(t\in(0,1)\), as desired.
\end{proof}
\begin{thm}\label{thm:const_dim_cod1}
Let \((\XX,\dist,\mass)\) be an \(\RCD(K,N)\) space having essential dimension \(n\). Then
\[
|\DIFF f|(\XX\setminus\mathcal R_n^*)=0,\quad\text{ for every }f\in\BV_{\mathrm{loc}}(\XX).
\]
\end{thm}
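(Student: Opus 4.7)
My plan is to combine a coarea reduction with an $L^2$-optimal transport argument, in the spirit of the proof of constancy of dimension in \cite{bru2021constancy}, but now leveraging the non-branching property (Theorem \ref{deng2020holderthm}) and the quantitative estimate \eqref{eq:Hold_cont} coming from \cite{deng2020holder}. The Proposition \ref{prop:key_prop_const_dim} just proved is used to propagate membership in $\mathcal R_n^*$ from interior times of a geodesic to \emph{all} interior times, and the remaining task is to handle the endpoint.

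First, by the Miranda/Fleming coarea formula for $\BV$ functions on PI spaces, $|\DIFF f|(B) = \int_{\RR}|\DIFF\chi_{\{f>t\}}|(B)\,\diff t$ for every Borel $B\subseteq\XX$. Therefore it suffices to prove that $|\DIFF\chi_E|(\XX\setminus\mathcal R_n^*)=0$ for an arbitrary set $E$ of locally finite perimeter. Suppose, for contradiction, that $|\DIFF\chi_E|(N)>0$, where $N\coloneqq\XX\setminus\mathcal R_n^*$. By a cutoff/localisation, we may assume $|\DIFF\chi_E|$ is finite and compactly supported, and normalize $\mu_0\coloneqq|\DIFF\chi_E|(N)^{-1}\,|\DIFF\chi_E|\mres N$ to obtain a probability measure concentrated on $N$. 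Choose any probability $\mu_1\ll\mass$ with bounded density and bounded support.

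Next, by Theorem \ref{deng2020holderthm} together with the $\CD(K,N)$ condition, there is a unique $L^2$-optimal dynamical plan $\pi\in\mathcal P(\mathrm{Geo}(\XX))$ coupling $\mu_0$ and $\mu_1$. Again by $\CD(K,N)$ and the fact that $\mu_1\ll\mass$, the intermediate marginals $\mu_t\coloneqq(e_t)_*\pi$ satisfy $\mu_t\ll\mass$ for every $t\in(0,1]$. Since $\mass(\XX\setminus\mathcal R_n^*)=0$, each such $\mu_t$ is concentrated on $\mathcal R_n^*$. Applying Fubini to the product measure $\pi\otimes\mathcal L^1\mres(0,1)$, one obtains that for $\pi$-a.e.\ geodesic $\gamma$, the set $\{t\in(0,1)\st\gamma_t\in\mathcal R_n^*\}$ has full Lebesgue measure in $(0,1)$; in particular it is dense. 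Proposition \ref{prop:key_prop_const_dim} then upgrades this to $\gamma_t\in\mathcal R_n^*$ for \emph{every} $t\in(0,1)$ and $\pi$-a.e.\ $\gamma$.

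Finally, one must contradict $\gamma_0\in N$ (which holds $\pi$-a.e.\ by $(e_0)_*\pi=\mu_0$). This is the main obstacle: the conclusion of Proposition \ref{prop:key_prop_const_dim} is restricted to the \emph{open} interval $(0,1)$, because the constants in the Hölder estimate \eqref{eq:Hold_cont} from \cite{deng2020holder} are only guaranteed on intervals $[\delta,1-\delta]$ bounded away from the endpoints. The way to close the argument, following the strategy of \cite{bru2021constancy}, is to apply \eqref{eq:Hold_cont} directly to pairs $(s,s')$ with both $s,s'\to 0^+$: since $\Theta_n(\mass,\gamma_{s'})$ is known to exist and lie in $(0,+\infty)$ for all $s'\in(0,1)$ by the previous step, the Hölder inequality forces both $\overline\Theta_n(\mass,\gamma_0)$ and $\underline\Theta_n(\mass,\gamma_0)$ to coincide with a common limit in $(0,+\infty)$ along any sequence of interior parameters $s'\searrow 0$. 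This yields $\gamma_0\in\mathcal R_n^*$, the desired contradiction. The delicate part is ensuring that the Hölder constants can be passed to the limit $\delta\searrow 0$; the argument parallels the proof of Proposition \ref{prop:key_prop_const_dim}, where the same passage is carried out at an interior point $t\in[\delta,1-\delta]$, but now exploiting that the estimate \eqref{eq:Hold_cont} already encodes all the uniformity needed.
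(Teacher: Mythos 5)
You identify the structural idea correctly: the paper's own proof is a direct reference to the argument of \cite[Theorem 3.1]{bru2021constancy}, with Proposition \ref{prop:key_prop_const_dim} replacing the analogous propagation statement about $\mathcal R_n$. Your coarea reduction, the optimal transport setup, and the Fubini argument yielding $\gamma_t\in\mathcal R_n^*$ for a.e.\ $t\in(0,1)$, followed by Proposition \ref{prop:key_prop_const_dim} to upgrade to every $t\in(0,1)$, are in the spirit of that reference.

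The closing step, however, does not work as written. You want $\gamma_0\in\mathcal R_n^*$ by applying the H\"older estimate \eqref{eq:Hold_cont} to pairs $(s,s')$ with both $s,s'\to 0^+$, but \eqref{eq:Hold_cont} holds with constants $C=C(N,\delta)$, $\bar r=\bar r(N,\delta)$, $\varepsilon=\varepsilon(N,\delta)$ that are only guaranteed for $s,s'\in[\delta,1-\delta]$, and these blow up as $\delta\to 0$. If $s_k,s_k'\searrow 0$, the only $\delta$ for which both lie in $[\delta,1-\delta]$ satisfies $\delta\leq\min(s_k,s_k')$, so the resulting bound $|\Theta_n(\mass,\gamma_{s_k})-\Theta_n(\mass,\gamma_{s_k'})|\lesssim C(\delta)|s_k-s_k'|^{\alpha}\,\Theta_n(\mass,\gamma_{s_k'})$ carries a divergent constant and does not yield a Cauchy property. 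There is no uniformity of the H\"older modulus near the endpoints, and nothing in \eqref{eq:Hold_cont} ``encodes all the uniformity needed'', contrary to your final sentence; indeed, the fact that Proposition \ref{prop:key_prop_const_dim} is itself stated only on the \emph{open} interval $(0,1)$ is a direct signal that endpoint propagation is not available for free. The actual proof of \cite[Theorem 3.1]{bru2021constancy} that the paper invokes must close the argument by a different mechanism (for instance, arranging that the relevant perimeter-typical point is visited at an interior time of the geodesics in play, or exploiting additional structure of the perimeter measure), not by passing to the limit $\delta\searrow 0$ in \eqref{eq:Hold_cont}. As stated, your proposal has a genuine gap at this last step.
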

\begin{proof}
The statement can be achieved by repeating verbatim the proof of \cite[Theorem 3.1]{bru2021constancy}, using \(\mathcal R_n^*\) instead of
\(\mathcal R_n\), and Proposition \ref{prop:key_prop_const_dim} instead of \cite[Proposition 2.14]{bru2021constancy}.
\end{proof}
The following theorem answers in the affirmative to \cite[Conjecture 5.32]{Sem20}.
\begin{thm}[Representation formula for the perimeter]\label{thm:repr_per}
Let \((\XX,\dist,\mass)\) be an \(\RCD(K,N)\) space having essential dimension \(n\). Let \(E\subseteq\XX\) be a set of locally finite perimeter. Then
\begin{equation}\label{eq:repr_per}
|\DIFF\chi_E|=\Theta_n(\mass,\cdot)\HH^{n-1}\llcorner\mathcal F E.
\end{equation}
In particular, it holds that \(\Theta_{n-1}(|\DIFF\chi_E|,x)=\Theta_n(\mass,x)\) for \(\HH^{n-1}\)-a.e. \(x\in\mathcal \mathcal{F} E\).
\end{thm}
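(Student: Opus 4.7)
The plan is to prove \eqref{eq:repr_per} by identifying the $(n-1)$-dimensional density of $|\DIFF\chi_E|$ pointwise on $\mathcal F E$ and then upgrading this identification to a measure equality via a standard Radon measure differentiation argument.

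Both measures in \eqref{eq:repr_per} are Radon measures concentrated on $\mathcal F E$: the left-hand side by combining Theorem \ref{thm:const_dim_cod1} (concentration on $\mathcal R_n^*$) with the discussion right after Definition \ref{def:ReducedBoundary} (which, modulo Theorem \ref{thm:const_dim_cod1}, gives $|\DIFF\chi_E|$-a.e.\ concentration on $\mathcal F E$), and the right-hand side by construction. Next, the third limit in \eqref{usefullimits} of Remark \ref{rem:properties_FE} holds at \emph{every} $x\in\mathcal F E$ and yields
\[
\Theta_{n-1}(|\DIFF\chi_E|,x)=\lim_{r\searrow 0}\frac{|\DIFF\chi_E|(B_r(x))}{\omega_{n-1}r^{n-1}}=\Theta_n(\mass,x)\in(0,+\infty),
\]
for every $x\in\mathcal F E$. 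This already proves the ``in particular'' part of the statement.

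To deduce \eqref{eq:repr_per}, the plan is to invoke a standard density-to-measure argument. The measure $|\DIFF\chi_E|$ is asymptotically doubling by Proposition \ref{zuppa}(ii) and has finite positive $(n-1)$-density everywhere on $\mathcal F E$. These two facts, together with Besicovitch-type differentiation, yield $|\DIFF\chi_E|\ll \HH^{n-1}\llcorner\mathcal F E$ and an explicit formula for the Radon--Nikod\'ym derivative in terms of the pointwise density, \emph{provided} that $\HH^{n-1}\llcorner\mathcal F E$ itself has unit $(n-1)$-density at $\HH^{n-1}$-a.e.\ point of $\mathcal F E$. This last fact is precisely the $(n-1)$-rectifiability of $\mathcal F E$, proved in \cite{bru2019rectifiability,bru2021constancy} and revisited in \Cref{sec:Appendix}, together with the hyperplane blow-up description of $\mathcal F E$ coming from \eqref{eqn:FnE} and Proposition \ref{prop:conv_good_coord}.

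The delicate point is the transition from pointwise density to measure identity: in a general metric space $\HH^{n-1}\llcorner\mathcal F E$ need not be asymptotically doubling, so some care is needed in invoking Besicovitch-type differentiation globally. The natural remedy is to work in the bi-Lipschitz $\RR^{n-1}$-charts provided by the rectifiability of $\mathcal F E$, where the density identity for the usual Hausdorff measure is classical, and to transfer the conclusion back to $\XX$ by a countable exhaustion of $\mathcal F E$ up to $\HH^{n-1}$-negligible sets.
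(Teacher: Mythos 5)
Your proposal follows essentially the same route as the paper: compute the pointwise density $\Theta_{n-1}(|\DIFF\chi_E|,x)=\Theta_n(\mass,x)$ from \eqref{usefullimits}, establish mutual absolute continuity between $|\DIFF\chi_E|$ and $\HH^{n-1}\llcorner\mathcal F E$, and then pin down the Radon--Nikod\'ym derivative using rectifiability of $\mathcal F E$.

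Two technical remarks on how the paper carries this out more cleanly than your outline. First, the absolute-continuity step does not rely on Besicovitch or asymptotic doubling of $|\DIFF\chi_E|$: the relevant tool is the elementary density comparison theorem \cite[Theorem 2.4.3]{AmbrosioTilli04}, which holds in arbitrary metric spaces. Since $\Theta_n(\mass,\cdot)$ is not uniformly bounded above or below on $\mathcal F E$, the paper partitions $\mathcal R_n^*$ into dyadic slices $R_j=\{2^j\le\Theta_n(\mass,\cdot)<2^{j+1}\}$ and applies the comparison theorem on each slice; your sketch implicitly needs the same localization and should say so. Second, the ``delicate point'' you identify (that $\HH^{n-1}\llcorner\mathcal F E$ need not be asymptotically doubling) is exactly what Ambrosio--Kirchheim \cite[Theorem 5.4]{AmbKir00} resolves as a black box: it gives the density formula for the Radon--Nikod\'ym derivative with respect to a $\sigma$-finite countably $\HH^{n-1}$-rectifiable measure. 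Your proposed remedy of transferring to bi-Lipschitz $\RR^{n-1}$-charts and invoking the classical Euclidean statement would amount to re-deriving that theorem, which is sound but unnecessary given the citation.
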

\begin{proof}
Up to a standard localization argument, we can suppose that \(E\) is of finite perimeter.
Define \(R_j\coloneqq\big\{x\in\mathcal R_n^*\,:\,2^j\leq\Theta_n(\mass,x)<2^{j+1}\big\}\) for all \(j\in\mathbb Z\). Notice that
\(\{R_j\}_{j\in\mathbb Z}\) is a measurable partition of \(\mathcal R_n^*\). Given \(j\in\mathbb Z\) and \(B\subseteq\XX\) Borel,
for any \(x\in B\cap R_j\cap\mathcal F E\) we have that
\begin{equation}\label{densitcomp}
\exists\Theta_{n-1}(|\DIFF\chi_E|,x)=\frac{\omega_n}{\omega_{n-1}}\lim_{r\searrow 0}\frac{r|\DIFF\chi_E|(B_r(x))}{\mass(B_r(x))}\frac{\mass(B_r(x))}{\omega_n r^n}
=\Theta_n(\mass,x)\in[2^j,2^{j+1}).
\end{equation}
Therefore, an application of \cite[Theorem 2.4.3]{AmbrosioTilli04} yields, for any \(B\subseteq\XX\) Borel,
\begin{equation}\notag
    2^j\HH^{n-1}(B\cap R_j\cap\mathcal F E)\leq|\DIFF\chi_E|(B\cap R_j)\leq 2^{j+n}\HH^{n-1}(B\cap R_j\cap\mathcal F E),
\end{equation}
whence it follows that \(2^j\HH^{n-1}\llcorner(R_j\cap\mathcal F E)\leq|\DIFF\chi_E|\llcorner R_j\leq 2^{j+n}\HH^{n-1}\llcorner(R_j\cap\mathcal F E)\).
Thanks to Theorem \ref{thm:const_dim_cod1}, we deduce that \(\mu_E\coloneqq\HH^{n-1}\llcorner\mathcal F E\) is a \(\sigma\)-finite
Borel measure on \(\XX\) satisfying \(|\DIFF\chi_E|\ll\mu_E\ll|\DIFF\chi_E|\). In particular, we know from \cite[Theorem 4.1]{bru2019rectifiability} that
the set \(\mathcal F E\) is countably \(\HH^{n-1}\)-rectifiable, so that \cite[Theorem 5.4]{AmbKir00} and the computation in \eqref{densitcomp} ensure that
\[
\frac{\diff|\DIFF\chi_E|}{\diff\mu_E}(x)=\lim_{r\searrow 0}\frac{|\DIFF\chi_E|(B_r(x))}{\omega_{n-1}r^{n-1}}
=\Theta_n(\mass,x)
\]
is verified for \(\HH^{n-1}\)-a.e.\ \(x\in\mathcal F E\). Therefore, the identity stated in \eqref{eq:repr_per} is achieved.
\end{proof}
\begin{rem}
Notice that, as a consequence of \cite[Corollary 3.2]{bru2021constancy}, for any set $E$ of locally finite perimeter in an $\RCD(K,N)$ space $(\XX,\dist,\mass)$ of essential dimension $n$, we have that 
\[
|D\chi_E|=\frac{\omega_{n-1}}{\omega_n}\mathcal{H}^h\llcorner\mathcal{F}E.
\]
Hence, taking also \eqref{eq:repr_per} into account, we conclude that the measure $\mathcal{H}^h$ and $\mathcal{H}^{n-1}$ are mutually absolutely continuous on the reduced boundary $\mathcal{F}E$.
\end{rem}

\subsection{Auxiliary results}\label{sec:Auxiliary}
Let \((\XX,\dist,\mass)\) be an \(\RCD(K,N)\) space of essential dimension $n$. Notice that if a given function $u:B_r(\bar{x})\rightarrow\RR$ is harmonic,
then $\nabla u$ admits a quasi-continuous representative in a localization of $\tanXcap$. Also, by tensorization of the energy, if $k\in\NN$, then the function
$$\XX\times\RR^k\supseteq B_{r}(\bar{x})\times\RR^k\ni(x,y)\mapsto u(x)$$ is harmonic, hence it admits a quasi-continuous representative in a localization of
$L^0_\capa(T(\XX\times\RR^k))$ with respect to the relevant capacity. Therefore, the following definition is meaningful:
\begin{defn}\label{real_normal}
Let \((\XX,\dist,\mass)\) be an \(\RCD(K,N)\) space having essential dimension \(n\). Let \(f\in\BV(\XX)\) be given.
Fix a good collection \(\{\boldsymbol u_\eta\}_\eta\) of splitting maps on \(\XX\).
Then, given any \(\eta\in(0,n^{-1})\cap\QQ\), the \(|\DIFF f|\)-measurable map \(\nu_f^{\boldsymbol u_\eta}\colon\XX\to\RR^n\)
is defined at \(|\DIFF f|\)-a.e.\ \(x\in\XX\) as
\[
\nu_f^{\boldsymbol u_\eta}(x)\coloneqq\big((\nu_f\cdot\nabla u^1_{\eta,k_\eta(x)})(x),\dots,(\nu_f\cdot\nabla u^n_{\eta,k_\eta(x)})(x)\big).
\]
The \(|\DIFF\chi_{\GG_f}|\)-measurable map \(\nu_{\GG_f}^{\boldsymbol u_\eta}\colon\XX\times\RR\to\RR^{n+1}\)
is defined at \(|\DIFF\chi_{\GG_f}|\)-a.e.\ \(p=(x,t)\in\XX\times\RR\) as
\[
\nu_{\GG_f}^{\boldsymbol u_\eta}(p)\coloneqq
\big((\nu_{\GG_f}\cdot\nabla u^1_{\eta,k_\eta(x)})(p),\dots,(\nu_{\GG_f}\cdot\nabla u^n_{\eta,k_\eta(x)})(p),(\nu_{\GG_f}\cdot\nabla\pi^2)(p)\big);
\]
notice that $|\DIFF \chi_{\GG_f}|$-a.e.\ $p=(x,t)$ satisfies $x\in D_\eta$, as a consequence of item $i)$ of \Cref{lemsplitting}, \Cref{prop:behaviour_TV_PI} and the existence of functions of locally bounded variation whose total variation equals $\mass$.
\end{defn}
In view of the following proposition, recall the definition of the reduced boundary in use in this note, Definition \ref{def:ReducedBoundary}. In particular, notice that, by definition, $\FF\GG_f\subseteq\mathcal R_{n+1}^*(\XX\times\RR)$, and we will use this inclusion throughout (in particular, recall the properties stated in \Cref{rem:properties_FE}). Notice finally that the matrix valued maps $C_f\ni x\mapsto A_\eta(x)$ in the proposition below are independent of $f$ (up the choice of their domain).
\begin{prop}\label{prop:def_set_C_f}
Let \((\XX,\dist,\mass)\) be an \(\RCD(K,N)\) space having essential dimension \(n\). Let \(f\in\BV(\XX)\) be given.
Let \(\{\boldsymbol u_\eta\}_\eta\) be a good collection of splitting maps on \(\XX\).
Then there exists a Borel set \(C_f\subseteq\XX\) satisfying the following properties:
\begin{itemize}
\item[\(\rm i)\)] \(|\DIFF f|^c=|\DIFF f|\llcorner C_f\) and \(\mass(C_f)=0\).
\item[\(\rm ii)\)] \(C_f\subseteq\mathcal R_n^*(\XX)\setminus J_f\) and \(\mathcal F\GG_f\cap(C_f\times\RR)
=({\rm id}_\XX,\bar f)(C_f)\).
\item[\(\rm iii)\)] Given any \(\eta\in(0,n^{-1})\cap\QQ\) and \(x\in C_f\), for \(A_\eta(x)\in\RR^{n\times n}\) as in Definition \ref{defn:GoodCollection},
\((A_\eta(x) u_{\eta,k(x)},\pi^2)\) is a set of good coordinates for \(\GG_f\) at \((x,\bar f(x))\).
\item[\(\rm iv)\)] If \(u=(u^1,\ldots,u^{n+1})\colon B_{r_x}(x,\bar f(x))\to\RR^{n+1}\) is a system of good coordinates for \(\GG_f\) at \((x,\bar f(x))\)
for some \(x\in C_f\), and the coordinates \((x_\ell)\) on the (Euclidean) tangent space to \(\XX\times\RR\) at \((x,\bar f(x))\) are chosen so that the maps
\((u^\ell)\) converge to \((x_\ell)\colon\RR^{n+1}\to\RR^{n+1}\) (when properly rescaled, see Remark \ref{convdelta}), then the blow-up of \(\GG_f\) at \((x,\bar f(x))\) can be written as
\[
H\coloneqq\big\{y\in\RR^{n+1}\;\big|\;y\cdot\nu(x,\bar f(x))\geq 0\big\},
\]
where the unit vector \(\nu(x,\bar f(x))\coloneqq\big(\nu^1(x,\bar f(x)),\ldots,\nu^{n+1}(x,\bar f(x))\big)\) is given by \eqref{eq:def_nu_in_good_coord}.

\item[\(\rm v)\)] If $p=(x,\bar f(x))\in  C_f\times \mathbb R$, then, for every $\eta\in (0,n^{-1})\cap\mathbb Q$, $x\in D_{\eta,k_\eta(x)}$ for some $k_\eta(x)$, and $p$ is a point of density $1$ of $D_{\eta,k_\eta(x)}\times\RR$ for $|\DIFF\chi_{\GG_f}|$.
\end{itemize}
\end{prop}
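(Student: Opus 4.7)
The plan is to define $C_f$ as a countable intersection of Borel sets, each eliminating a $|\DIFF f|^c$-negligible piece so that the concentration statement in (i) is preserved at every step. I would start from
\[
\tilde C_f\coloneqq\mathcal R_n^*(\XX)\cap\XX_f\setminus J_f,
\]
which by Theorem \ref{thm:const_dim_cod1}, by \eqref{eq:bar_f_finite}, and by $|\DIFF f|^c(J_f)=0$ (Definition \ref{decomptv}) carries all the mass of $|\DIFF f|^c$. Intersecting $\tilde C_f$ with a $\mass$-null Borel carrier of $|\DIFF f|^s$ then produces a set on which $|\DIFF f|$ coincides with $|\DIFF f|^c$, securing (i) and the first inclusion in (ii).

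Next, I would arrange that $(x,\bar f(x))\in\mathcal F\GG_f$ for every $x$ in the set under construction. By Proposition \ref{prop:behaviour_TV_PI}, $\pi_*(|\DIFF\chi_{\GG_f}|\llcorner\tilde C_f\times\RR)=|\DIFF f|^c$; by Lemma \ref{lem:char_G_f}, $\partial^*\GG_f\cap(\{x\}\times\RR)\subseteq\{(x,\bar f(x))\}$ for each $x\in\tilde C_f$; and $|\DIFF\chi_{\GG_f}|$ is concentrated on $\mathcal F\GG_f$. A restriction to those $x\in\tilde C_f$ with $(x,\bar f(x))\in\mathcal F\GG_f$ thus keeps full $|\DIFF f|^c$-mass and upgrades (ii) to the stated equality. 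For each $\eta\in(0,n^{-1})\cap\QQ$, I would then intersect with $D_\eta$ (Lemma \ref{lemsplitting}), with the Lebesgue points of $M_\eta$ for $\mass$ (so that $A_\eta(x)$ is well-defined and $A_\eta(x)M_\eta(x)A_\eta(x)^T=\mathrm{Id}$), with the Lebesgue points at $(x,\bar f(x))$ of the lifted quantities $(\nabla u^\ell_{\eta,k_\eta(x)}\cdot\nabla u^j_{\eta,k_\eta(x)})\circ\pi^1$ for $|\DIFF\chi_{\GG_f}|$, and with the Lebesgue points at $(x,\bar f(x))$ of $\nu_{\GG_f}^{\boldsymbol u_\eta}$. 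The asymptotic doubling of $|\DIFF\chi_{\GG_f}|$ at reduced-boundary points (Proposition \ref{zuppa}, Remark \ref{rmk:per_asympt_doubl}) makes each such condition hold on a full-measure subset of $(\mathrm{id}_\XX,\bar f)(\tilde C_f)$ for $|\DIFF\chi_{\GG_f}|$; the projection identity above then transfers this to full $|\DIFF f|^c$-mass in $\tilde C_f$. Taking the intersection over the countable family of admissible $\eta$ produces the final $C_f$.

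With this construction, property (iii) is automatic: the tensorized product structure of $\XX\times\RR$ gives $\nabla\pi^2\cdot\nabla(u^\ell_{\eta,k_\eta(x)}\circ\pi^1)=0$ and $|\nabla\pi^2|=1$; together with the Lebesgue-point condition on $M_\eta$ and the identity $A_\eta(x)M_\eta(x)A_\eta(x)^T=\mathrm{Id}$ this yields the orthonormality requirement of Definition \ref{defn:good_coordinates} in both $\mass\otimes\mathcal L^1$ and $|\DIFF\chi_{\GG_f}|$ senses; the existence of $\nu(x,\bar f(x))\in\RR^{n+1}$ comes from the Lebesgue-point behavior of $\nu_{\GG_f}^{\boldsymbol u_\eta}$, and its unit norm from $|\nu_{\GG_f}|=1$ $|\DIFF\chi_{\GG_f}|$-a.e.\ combined with orthonormality of the limiting basis. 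Property (iv) is an immediate application of Proposition \ref{prop:conv_good_coord} in the $\RCD$ product space $\XX\times\RR$ at the point $(x,\bar f(x))\in\mathcal F\GG_f$, and (v) follows by applying the Lebesgue differentiation theorem for the asymptotically doubling measure $|\DIFF\chi_{\GG_f}|$ (Remark \ref{rmk:per_asympt_doubl}) to the indicator of $D_{\eta,k_\eta(x)}\times\RR$ at $(x,\bar f(x))$.

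The principal obstacle is the bookkeeping in the second step: the good splitting maps are chosen on $\XX$, yet the good-coordinate conditions must be verified on $\XX\times\RR$ against $|\DIFF\chi_{\GG_f}|$, and all the Lebesgue-point requirements must hold simultaneously for the countable family of rational $\eta\in(0,n^{-1})$. The mechanism that makes the transfer possible is the projection identity in Proposition \ref{prop:behaviour_TV_PI} combined with the already-secured identification $(x,\bar f(x))\in\mathcal F\GG_f$, which together let us pull Lebesgue-point fullness in $|\DIFF\chi_{\GG_f}|$ down to $|\DIFF f|^c$-fullness through the graph map $(\mathrm{id}_\XX,\bar f)$.
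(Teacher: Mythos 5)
Your proposal is correct and takes essentially the same route as the paper: both build $C_f$ as a countable intersection of Borel sets each of full $|\DIFF f|^c$-measure — a null carrier of $|\DIFF f|^s$, $\mathcal R_n^*\setminus J_f$, the domains $D_\eta$, the $|\DIFF\chi_{\GG_f}|$-Lebesgue-point sets for the lifted dot products and for $\nu_{\GG_f}^{\boldsymbol u_\eta}$, and the pullback via $({\rm id}_\XX,\bar f)$ of the reduced boundary — and use the projection identity of Proposition \ref{prop:behaviour_TV_PI} plus the injectivity of $\pi^1$ on $\mathcal F\GG_f\cap(C_f\times\RR)$ to transfer $|\DIFF\chi_{\GG_f}|$-fullness to $|\DIFF f|^c$-fullness. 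One small imprecision: to make (iv) and (v) hold pointwise on $C_f$, you must also explicitly intersect with the $|\DIFF\chi_{\GG_f}|$-full set where the conclusion of Proposition \ref{prop:conv_good_coord} applies and with the density-one points of $D_{\eta,k}\times\RR$ (the sets $\mathcal A$ and $\mathcal D$ in the paper); you gesture at both but do not list them alongside the other intersections, though your "each eliminating a $|\DIFF f|^c$-negligible piece" framing accommodates this.
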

\begin{proof}
Let us start this proof by defining several sets whose intersection will define $C_f$. Hence we will define $C_f$ in \eqref{eqn:DefiniamoCf}, and we will verify each item separately.
\smallskip

For every $\eta\in (0,n^{-1})\cap\mathbb Q$, and every $k\in\mathbb N$, take $\mathcal{D}_{\eta,k}$ to be the set of points of density 1 in $(D_{\eta,k}\times\mathbb R)\cap \FF\GG_f$ with respect to $|\DIFF\chi_{\mathcal{G}_f}|$. We thus have that $\bigcup_{k\in\mathbb N} \mathcal{D}_{\eta,k}$ covers $|\DIFF\chi_{\mathcal{G}_f}|$-almost all $D_\eta\times\mathbb R$. Hence, by \Cref{prop:behaviour_TV_PI} and \Cref{lemsplitting}, the set $\pi^1(\bigcup_{k\in\mathbb N}\mathcal{D}_{\eta,k})$ covers $|\DIFF f|$-almost all $\XX$ for every $\eta\in (0,n^{-1})\cap\mathbb Q$. As a consequence, if we denote $\mathcal{D}\coloneqq\bigcap_{\eta\in (0,n^{-1})\cap\mathbb Q}\pi^1(\bigcup_{k\in\mathbb N}\mathcal{D}_{\eta,k})$, then 
\begin{equation}\label{eqn:Set1}
|\DIFF f|(\XX\setminus \mathcal{D})=0.
\end{equation}

Let $\mathcal A\subseteq \XX\times\mathbb R$ be the set of the points $(x,t)\in \XX\times\mathbb R$ such that if $u=(u^1,\dots,u^{n+1}):B_{r_{(x,t)}}(x,t)\to\mathbb R^{n+1}$ is a system of good coordinates for $\mathcal{G}_f$ at $(x,t)$, and the coordinates \((x_\ell)\) on the (Euclidean) tangent space to \(\XX\times\RR\) at \((x,t)\) are chosen so that the maps
\((u^\ell)\) converge to \((x_\ell)\colon\RR^{n+1}\to\RR^{n+1}\) (when properly rescaled), then the blow-up of \(\GG_f\) at \((x,t)\) can be written as
\[
\big\{y\in\RR^{n+1}\;\big|\;y\cdot\nu(x,t)\geq 0\big\},
\]
where the unit vector \(\nu(x,t)\coloneqq\big(\nu^1(x,t),\ldots,\nu^{n+1}(x,t)\big)\) is given by \eqref{eq:def_nu_in_good_coord}. Then, by \cref{prop:conv_good_coord}, we have also that 
\begin{equation}\label{eqn:Set3}
|\DIFF\chi_{\mathcal{G}_f}|\big((\XX\times\mathbb R)\setminus \mathcal A\big)=0.
\end{equation}

Let $\eta\in (0,n^{-1})\cap\mathbb Q$, and let $\mathcal{T}_\eta$ be the Lebesgue points of $\nu_{\GG_f}^{\boldsymbol u_\eta}$ (defined in \Cref{real_normal}) with respect to $|\DIFF \chi_{\mathcal{G}_f}|$. Let $\mathcal{T}\coloneqq\bigcap_{\eta\in (0,n^{-1})\cap\mathbb Q}\mathcal{T}_\eta$, and notice that \begin{equation}\label{eqn:Set4}
|\DIFF \chi_{\mathcal{G}_f}|\big((\XX\times\mathbb R)\setminus\mathcal{T}\big)=0.
\end{equation}

Let us fix $\eta\in (0,n^{-1})\cap\mathbb Q$, and $k\in\mathbb N$. Let $\widetilde M_\eta:=(M_\eta,\pi^2)$ be defined on $\XX\times\mathbb R$, where $M_\eta$ is defined in \eqref{eqn:MAPPAGRADIENTI}. Notice that $\widetilde M_\eta$ is $|\DIFF \chi_{\mathcal{G}_f}|$-measurable. Let $\mathcal{S}_\eta$ be the Lebesgue points of $\widetilde M_\eta$ with respect to $|\DIFF \chi_{\mathcal{G}_f}|$, and let $\mathcal{S}\coloneqq\bigcap_{\eta\in (0,n^{-1})}\mathcal{S}_\eta$. Notice that 
\begin{equation}\label{eqn:Set5}
|\DIFF \chi_{\mathcal{G}_f}|\big((\XX\times\mathbb R)\setminus\mathcal{S}\big)=0.
\end{equation}

Let $S\subseteq\XX_f$ with $\mass(S)=0$ be such that $|\DIFF f|^s$ is concentrated on $S$ (recall \eqref{eq:bar_f_finite}). Let us now define
\begin{equation}\label{eqn:DefiniamoCf}
C_f\coloneqq S\cap\left(\mathcal{R}_n^*(\XX)\setminus J_f\right)\cap\bigg(\bigcap_{\eta\in(0,n^{-1})\cap\mathbb Q}D_\eta\bigg)
\cap \pi^1(\mathcal A\cap \mathcal{T}\cap\mathcal{S}\cap\mathcal{F}\mathcal{G}_f)\cap\mathcal{D},
\end{equation}
where $D_\eta$ is defined in \Cref{defn:GoodCollection}, $J_f$ is the jump set of $f$, $\mathcal{F}\mathcal{G}_f$ is the reduced boundary of $\mathcal{G}_f$, and $\mathcal A,\mathcal{T},\mathcal{S}$ are defined above. Let us verify each item separately. 
\smallskip

\textbf{Item (i)}. Notice that $|\DIFF f|^c$ is concentrated on $S$. Moreover, $|\DIFF f|^c$ is concentrated on $\XX\setminus J_f$, and due to \Cref{lemsplitting}, $|\DIFF f|^c$ is concentrated on $\bigcap_{\eta\in (0,n^{-1})\cap\mathbb Q}D_\eta$ as well. Due to \eqref{eqn:Set1}, $|\DIFF f|$ is concentrated on $\mathcal{D}$. Furthermore, $|\DIFF \chi_{\mathcal{G}_f}|$ is concentrated on $\mathcal A\cap\mathcal{T}\cap\mathcal{S}\cap\mathcal{F}\mathcal{G}_f$ due \eqref{eqn:Set3}, \eqref{eqn:Set4}, \eqref{eqn:Set5}, and to the definition of reduced boundary, see \Cref{def:ReducedBoundary}. Thus, due to \Cref{prop:behaviour_TV_PI}, $|\DIFF f|$ is concentrated on $\pi^1(\mathcal A\cap\mathcal{T}\cap\mathcal{S}\cap\mathcal{F}\mathcal{G}_f)$. Putting all together, we get that $|\DIFF f|^c$ is concentrated on $C_f$.
\smallskip

\textbf{Item (ii)}. { By \Cref{lem:char_G_f}, one has that if $x\in C_f\setminus J_f$, then
$\mathcal{F}\mathcal{G}_f\cap(\{x\}\times \mathbb R)=\{(x,\bar f(x))\}$. Indeed, $x\in C_f\subseteq \pi^1(\mathcal{F}\mathcal{G}_f)$, and then $\mathcal{F}\mathcal{G}_f\cap(\{x\}\times \mathbb R)$ is nonempty}. Hence $\mathcal{F}\mathcal{G}_f\cap(C_f\times\mathbb R)=(\mathrm{id}_\XX,\bar f)(C_f)$. 
\smallskip

\textbf{Item (iii)}. Let $x\in C_f$. Hence, by Item (ii) and by definition of $C_f$, we have that $x=\pi^1(x,\bar f(x))$, and $(x,\bar f(x))\in\mathcal{T}\cap\mathcal{S}$.

Let $\eta\in (0,n^{-1})\cap\mathbb Q$. We have that there exists $k_\eta(x)$ such that $x\in D_{\eta,k_\eta(x)}$. By Item iii) of \Cref{lemsplitting}, compare with \eqref{eqn:MAPPAGRADIENTI}, we get the existence of a matrix $M(x)\in\mathbb R^{n\times n}$ such that, for every $\ell,j=1,\dots,n$,
$$
\lim_{r\searrow 0}\dashint_{B_r(x)}\big|\nabla u^\ell_{\eta,k_\eta(x)}\,\cdot\,\nabla u^j_{\eta,k_\eta(x)}-M(x)_{\ell,j}\big|\dd{\mass}=0.
$$
Hence, taking $A_\eta(x)$ to be the matrix as in the end of \Cref{defn:GoodCollection}, we conclude that, calling $v_{\eta,k_\eta(x)}\coloneqq A_\eta(x)u_{\eta,k_\eta(x)}$, we have, for every $\ell,j=1,\dots,n$,
$$
\lim_{r\searrow 0}\dashint_{B_r(x)} |\nabla v^\ell_{\eta,k_\eta(x)}\,\cdot\,\nabla v^j_{\eta,k_\eta(x)}-\delta_{\ell j}|\dd{\mass}=0.
$$
Hence, as a consequence of the previous equality, the independence of the coordinates in $\XX\times\mathbb R$, and Fubini theorem, calling $\widetilde v_{\eta,k_\eta(x)}:=(v_{\eta,k_\eta(x)},\pi^2)$, we get that the following holds for every $\ell,j=1,\dots,n+1$,
\begin{equation}\label{eqn:Good1}
\lim_{r\searrow 0}\dashint_{B_r(x,\bar f(x))} |\nabla \widetilde v^\ell_{\eta,k_\eta(x)}\,\cdot\,\nabla \widetilde v^j_{\eta,k_\eta(x)}-\delta_{\ell j}|\dd(\mass\otimes\mathcal{H}^1)=0.
\end{equation}
Now, since $(x,\bar f(x))\in\mathcal{S}_\eta$, and $\mathcal{S}_\eta$ is the set of the Lebesgue points of $(M_\eta,\pi^2)$ (see \eqref{eqn:MAPPAGRADIENTI}) with respect to $|\DIFF \chi_{\mathcal{G}_f}|$, we get that also, for every $\ell,j=1,\dots,n+1$, the following holds
\begin{equation}\label{eqn:Good2}
\lim_{r\searrow 0}\dashint_{B_r(x,\bar f(x))} |\nabla \widetilde v^\ell_{\eta,k_\eta(x)}\,\cdot\,\nabla \widetilde v^j_{\eta,k_\eta(x)}-\delta_{\ell j}|\dd|\DIFF \chi_{\mathcal{G}_f}|=0.
\end{equation}
Finally, notice that $(x,\bar f(x))\in \mathcal{T}_\eta$, and $\mathcal{T}_\eta$ are the Lebesgue points of $\nu_{\GG_f}^{\boldsymbol u_\eta}$ with respect to $|\DIFF \chi_{\mathcal{G}_f}|$. Hence, $(x,\bar f(x))$ is also a Lebesgue point of the $|\DIFF \chi_{\mathcal{G}_f}|$-measurable map defined for $p=(y,t)$
\begin{equation}\label{eqn:NuAumentata}
\widetilde\nu_{\GG_f}^{\boldsymbol u_\eta}(p)\coloneqq
\big((\nu_{\GG_f}\cdot\nabla A_\eta(x)u^1_{\eta,k_\eta(x)})(p),\dots,(\nu_{\GG_f}\cdot\nabla A_\eta(x)u^n_{\eta,k_\eta(x)})(p),(\nu_{\GG_f}\cdot\nabla\pi^2)(p)\big).
\end{equation}
Arguing as in the last part of \cite[Proposition 3.6]{bru2021constancy}, we get that the norm of the $|\DIFF \chi_{\mathcal{G}_f}|$-Lebesgue representative of $\widetilde\nu_{\GG_f}^{\boldsymbol u_\eta}$ at $(x,\bar f(x))$ is 1. Hence the last information, together with \eqref{eqn:Good1}, and \eqref{eqn:Good2}, give that $\widetilde v_{\eta,k_\eta(x)}$ is a set of good coordinates for $\mathcal{G}_f$ at $(x,\bar f(x))$.
\smallskip

\textbf{Item (iv)}. It follows from Item (ii) and the definition of $\mathcal A$.
\smallskip

\textbf{Item (v)}. It follows from Item (ii) and the definition of $\mathcal{D}$.
\end{proof}

\begin{thm}\label{thmcantor}
Let $(\XX,\dist,\mass)$ be an $\RCD(K,N)$ space having essential dimension \(n\). Fix a function $f\in\BV(\XX)$ and a good collection
\(\{\boldsymbol u_\eta\}_\eta\) of splitting maps on \(\XX\). Let \(C_f\subseteq\XX\) be as in Proposition \ref{prop:def_set_C_f}. Then
for any given \(\eta\in(0,n^{-1})\cap\QQ\) it holds that
$$(\nu_{\GG_f}^{\boldsymbol u_\eta})_{n+1}(p)= 0,\quad\text{for }\HH^n\text{-a.e.\ }p\in\mathcal F\GG_f\cap(C_f\times\RR).$$
\end{thm}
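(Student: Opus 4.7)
The plan is to derive, as the heart of the argument, the signed-measure identity
\begin{equation}\label{eq:plan-identity}
({\rm id}_\XX, \bar f)_* \mass \;=\; -(\nu_{\GG_f} \cdot \nabla \pi^2)\, |\DIFF \chi_{\GG_f}| \qquad \text{on }\XX \times \RR,
\end{equation}
and then to observe that the left-hand side vanishes on $C_f \times \RR$ because of Proposition \ref{prop:def_set_C_f}(i). Concretely, I would fix a test function $\phi \in {\rm Test}^\infty(\XX \times \RR)$ compactly supported in the $t$-variable and apply Theorem \ref{thm:Intro} to the vertical vector field $v = \phi \nabla \pi^2$. Since $\pi^2$ is harmonic on the product and $|\nabla \pi^2| \equiv 1$, one has ${\rm div}(v) = \nabla \phi \cdot \nabla \pi^2 = \partial_t \phi$, so Fubini's theorem yields
\[
\int \chi_{\GG_f}\, \partial_t \phi \dd(\mass \otimes \mathcal L^1) \;=\; \int_\XX \phi(y, \bar f(y)) \dd\mass(y),
\]
while the Gauss--Green formula rewrites the left-hand side as $-\int \phi\,(\nu_{\GG_f} \cdot \nabla \pi^2) \dd|\DIFF\chi_{\GG_f}|$. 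By density of test functions in $C_c(\XX \times \RR)$, this integral identity extends to all continuous compactly supported $\phi$, and Riesz representation then promotes it to the measure identity \eqref{eq:plan-identity}.

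Once \eqref{eq:plan-identity} is available, the conclusion is a short measure-theoretic observation. Because $\mass(C_f) = 0$, for every Borel set $A \subseteq C_f \times \RR$ one has
\[
({\rm id}_\XX, \bar f)_* \mass(A) \;=\; \mass\bigl(\{y \in C_f : (y, \bar f(y)) \in A\}\bigr) \;\leq\; \mass(C_f) \;=\; 0,
\]
and \eqref{eq:plan-identity} forces $\int_A \nu_{\GG_f} \cdot \nabla \pi^2 \dd|\DIFF\chi_{\GG_f}| = 0$. Varying $A$, this gives $\nu_{\GG_f} \cdot \nabla \pi^2 = 0$ at $|\DIFF\chi_{\GG_f}|$-a.e.\ point of $C_f \times \RR$. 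Since $(\nu_{\GG_f}^{\boldsymbol u_\eta})_{n+1}(p) = (\nu_{\GG_f} \cdot \nabla \pi^2)(p)$ by Definition \ref{real_normal}, and since Theorem \ref{thm:repr_per} makes $|\DIFF\chi_{\GG_f}|$ and $\HH^n \llcorner \mathcal F\GG_f$ mutually absolutely continuous on $\mathcal F\GG_f$, this delivers the $\HH^n$-a.e.\ vanishing claimed in the theorem, in agreement with the heuristic that on the singular part of $\DIFF f$ the graph of $\bar f$ is ``vertical'' and hence carries a horizontal normal.

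The main obstacle is purely technical: $\pi^2$ is unbounded and thus is not itself in ${\rm Test}^\infty(\XX \times \RR)$, which means that $v = \phi \nabla \pi^2$ is not \emph{a priori} a legitimate test vector field. I would bypass this by constructing, for each compactly supported $\phi$, a genuine test function $g \in {\rm Test}^\infty(\XX \times \RR)$ of the product form $g(y, t) = \eta(y) \xi(t)$ with $\eta \in {\rm Test}^\infty(\XX)$ and $\xi \in C_c^\infty(\RR)$, chosen so that $\eta \equiv 1$ and $\xi(t) \equiv t$ on an open neighbourhood of ${\rm supp}(\phi)$. On this neighbourhood $\nabla g = \nabla \pi^2$ and $\Delta g = 0$, hence $\phi \nabla g \in \TestV(\XX \times \RR)$ satisfies ${\rm div}(\phi \nabla g) = \partial_t \phi$ and coincides with $\phi \nabla \pi^2$ on ${\rm supp}(\phi)$, making the Gauss--Green computation rigorous. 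No blow-up analysis is required: the whole argument reduces to the singularity of $|\DIFF f|^c$ encoded by $\mass(C_f) = 0$.
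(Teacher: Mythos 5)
Your argument is correct, and it is genuinely different from --- and substantially shorter than --- the one in the paper. You reduce the statement to the measure identity
\[
({\rm id}_\XX,\bar f)_*\mass \;=\; -(\nu_{\GG_f}\cdot\nabla\pi^2)\,|\DIFF\chi_{\GG_f}|,
\]
obtained by testing the Gauss--Green formula of Theorem~\ref{thm:Intro} against vertical vector fields $\phi\nabla g$ (with $g$ a test function locally equal to $\pi^2$), using that ${\rm div}(\phi\nabla\pi^2)=\partial_t\phi$, Fubini, and density of compactly supported test functions in $C_c(\XX\times\RR)$ for the uniform norm. Since the left-hand side assigns zero mass to $C_f\times\RR$ (because $\mass(C_f)=0$) and the density $-(\nu_{\GG_f}\cdot\nabla\pi^2)$ is nonnegative (positivity of the left-hand measure), the vanishing of $(\nu_{\GG_f}\cdot\nabla\pi^2)$ holds $|\DIFF\chi_{\GG_f}|$-a.e.\ on $C_f\times\RR$; mutual absolute continuity of $|\DIFF\chi_{\GG_f}|$ and $\HH^n\llcorner\mathcal F\GG_f$ (Theorem~\ref{thm:repr_per}) then converts this into the $\HH^n$-a.e.\ statement. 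I checked the sign conventions (the paper's $\nu_F$ is the inner normal, so on $\RR^n$ one indeed gets $-(\nu_{\GG_f}\cdot e_{n+1})|\DIFF\chi_{\GG_f}|=({\rm id},f)_*\mathcal L^n$) and the locality of both the Sobolev and capacitary gradients needed to replace $\nabla g$ by $\nabla\pi^2$ on the support of $\phi$: these all go through.

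The paper's proof is instead a blow-up argument: it stratifies $C_f\times\RR$ according to the size of $(\nu_{\GG_f}^{\boldsymbol u_\eta})_{n+1}$, uses Lusin and Egorov, shows via weak convergence of rescaled perimeters that the relevant piece of $\mathcal F\GG_f$ is locally trapped inside a cone $C_{2\gamma}(p)$, estimates the upper $n$-density of its projection onto $\XX$, and invokes $\HH^n(C_f)=0$. Both proofs ultimately exploit $\mass$-negligibility of $C_f$, but yours does it at the level of the Gauss--Green duality rather than at the level of densities, avoiding all the geometric machinery (cone containments, asymptotic doubling, Egorov stratification) while relying more heavily on the analytic infrastructure around $\BV_{\rm loc}$ on products: the localization of Theorem~\ref{thm:Intro} to $\BV_{\rm loc}$ data, the quasi-continuity and locality of $\nabla\pi^2$ in $L^0_\capa(T(\XX\times\RR))$, and the density of compactly supported test functions in $C_c$ in the uniform norm (which needs, e.g., Feller regularization plus cut-off test functions). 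These are routine in the $\RCD$ setting, so they do not affect correctness, but it is worth stating them explicitly if you were to write this up, since they are the places where the argument leans on nontrivial background. The trade-off, then, is that your route is shorter and conceptually cleaner --- it makes the heuristic ``on the singular part the graph is vertical, so the normal has no $t$-component'' literally true at the level of measures --- at the cost of invoking slightly more of the calculus developed in \cite{BGBV,debin2019quasicontinuous}.
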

\begin{proof}
We recall from Proposition \ref{prop:behaviour_TV_PI} that \(\pi^1_*\big(|\DIFF\chi_{\GG_f}|\llcorner(\mathcal F\GG_f\cap(C_f\times\RR))\big)=|\DIFF f|^c\).
Moreover, Lemma \ref{lem:char_G_f} ensures that the mapping \(\pi^1\colon\mathcal F\GG_f\cap(C_f\times\RR)\to C_f\) is the inverse of
\(({\rm id}_\XX,\bar f)\colon C_f\to\mathcal F\GG_f\cap(C_f\times\RR)\). Given any \(k,j\in\NN\) and \(\alpha\in(0,1)\cap\QQ\), we define
\[
C_f^{k,\alpha,j}\coloneqq\Big\{x\in C_f\cap D_{\eta,k}\;\Big|\;\big|(\nu_{\GG_f}^{\boldsymbol u_\eta})_{n+1}(x,\bar f(x))\big|\geq\alpha,
\,j^{-1}\leq\Theta_{n+1}\big(\mass\otimes\mathcal L^1,(x,\bar f(x))\big)\leq j\Big\}.
\]
{Notice that the sets $C_f^{k,\alpha,j}$ obviously depend on $\eta$ but, as we are working with a fixed $\eta\in (0,n^{-1})\cap\QQ$, we do not make this dependence explicit.}
Recalling Theorem \ref{thm:const_dim_cod1} we see that
\[
\big\{x\in C_f\;\big|\;(\nu_{\GG_f}^{\boldsymbol u_\eta})_{n+1}(x,\bar f(x))\neq 0\big\}=\bigcup_{k,\alpha,j}C_f^{k,\alpha,j},
\quad\text{ up to }|\DIFF f|\text{-null sets.}
\]
Hence, to prove the statement amounts to showing that each set \(\mathcal F\GG_f\cap(C_f^{k,\alpha,j}\times\RR)\) is \(\HH^n\)-negligible.
Given any \(\varepsilon>0\), by Lusin's Theorem we can find \(\Sigma\subseteq\mathcal F\GG_f\cap(C_f^{k,\alpha,j}\times\RR)\) Borel such that
\(\bar f\) is continuous on \(\pi^1(\Sigma)\) and \(\HH^n\big((\mathcal F\GG_f\cap(C_f^{k,\alpha,j}\times\RR))\setminus\Sigma\big)<\varepsilon\).

Our aim is to show that
\begin{equation}\label{thmcantor_claim1}
\HH^n(\Sigma)=0,
\end{equation}
since this would imply \(\HH^n\big(\mathcal F\GG_f\cap(C_f^{k,\alpha,j}\times\RR)\big)=0\) by the arbitrariness of \(\varepsilon>0\).
Up to discarding an \(\HH^n\)-null set from \(\Sigma\), we can also assume (thanks to Remark \ref{rmk:per_asympt_doubl} and Theorem \ref{thm:repr_per})
that \(\Theta_n(|\DIFF\chi_{\GG_f}|\llcorner\Sigma,p)=\Theta_{n+1}(\mass\otimes\mathcal L^1,p)\) holds for every \(p\in\Sigma\). Now we claim that
\begin{equation}\label{thmcantor_claim2}
\lim_{r\searrow 0}\frac{|\DIFF\chi_{\GG_f}|\big((\Sigma\cap B_r(p))\setminus(\XX\times B_{\beta r}(t))\big)}{r^n}=0,\quad\text{ for every }p=(x,t)\in\Sigma,
\end{equation}
where we set \(\beta=\beta(\alpha)\coloneqq\sqrt{1-\alpha^2}\in(0,1)\). { The role played by $\alpha$ will be made clear in  what follows}. To show the claim, fix \(p=(x,t)\in\Sigma\) and take any sequence
\(\{r_i\}_i\subseteq(0,+\infty)\) with \(r_i\searrow 0\). Since \(x\in\mathcal R_n(\XX)\), one has that
\[
(\XX,r_i^{-1}\dist,\mass_x^{r_i},x)\to(\RR^n,\dist_e,\underline{\mathcal L}^n,0),\quad\text{ in the pmGH topology.}
\]
Let \((\ZZ,\dist_\XX)\) be a realization of such convergence. Then \((\ZZ\times\RR,\dist_\ZZ\times\dist_e)\) is a realization of
\[
\big(\XX\times\RR,r_i^{-1}(\dist\times\dist_e),(\mass\otimes\mathcal L^1)_p^{r_i},p,\GG_f\big)\to(\RR^{n+1},\dist_e,\underline{\mathcal L}^{n+1},0,H),
\]
where \(H\subseteq\RR^{n+1}\) is a halfspace. We also know from item v) of Proposition \ref{prop:def_set_C_f} that, up to passing to a not relabelled subsequence,
the rescaled perimeters \(|\DIFF\chi_{\GG_f}|\) weakly converge to \(\HH^n\llcorner\partial H\) in duality with \(C_{bs}(\ZZ)\).
Moreover, by item iv) of Proposition \ref{prop:def_set_C_f}, \(\partial H\) is normal to \(\nu_{\GG_f}^{\boldsymbol u_\eta}(p)\). { Thus, since $\big(\nu_{\GG_f}^{\boldsymbol u_\eta}\big)_{n+1}(p)\ge \alpha$, we have \(\partial H\cap B_1(0)\subseteq B_1(0)\times B_\beta(0)\), by our choice of $\beta$.}
From the latter the claim \eqref{thmcantor_claim2} follows, taking into account also \eqref{usefullimits}.
For \(\gamma\in(0,+\infty)\) and \((x,t)\in\XX\times\RR\), we denote the cone
\[
C_\gamma(x,t)\coloneqq\big\{(y,s)\in\XX\times\RR\;\big|\;\gamma\,\dist(y,x)\geq|s-t|\big\}.
\]
Now take \(\gamma=\gamma(\beta)=\sqrt{\frac{1+\beta}{1-\beta}}\in(1,+\infty)\). Notice that \(\gamma^2>\frac{\beta}{1-\beta}\). Next we claim that
\begin{equation}\label{thmcantor_claim3}
\lim_{r\searrow 0}\frac{|\DIFF\chi_{\GG_f}|\big((\Sigma\cap B_r(p))\setminus C_\gamma(p)\big)}{r^n}=0,\quad\text{ for every }p=(x,t)\in\Sigma.
\end{equation}
In order to prove it, fix \(\delta>0\). By virtue of \eqref{thmcantor_claim2}, we can take \(r_0>0\) small enough so that
\begin{equation}\label{thmcantor_aux1}
\sup_{r\in(0,r_0)} \frac{|\DIFF\chi_{\GG_f}|\big((\Sigma\cap B_r(p))\setminus(\XX\times B_{\beta r}(t))\big)}{r^n}\leq\delta.
\end{equation}
Notice that it holds that
\begin{equation}\label{thmcantor_aux2}
B_{r_0}(p)\setminus C_\gamma(p)\subseteq\bigcup_i B_{r_i}(p)\setminus(\XX\times B_{\beta r_i}(t)),
\end{equation}
where for any $i\in\NN$ with $i\ge 1$ we define $r_i\defeq\beta\sqrt{\frac{\gamma ^2+1}{\gamma^2}}r_{i-1}=\bigg(\beta\sqrt{\frac{\gamma ^2+1}{\gamma^2}}\bigg)^i r_0$. Given that
$$
|\DIFF\chi_{\GG_f}|\big((\Sigma\cap B_{r_i}(p))\setminus(\XX\times B_{\beta r_i}(p))\big)\overset{\eqref{thmcantor_aux1}}\le \delta r_i^n=
\delta\Bigg(\beta\sqrt{\frac{\gamma^2 +1}{\gamma^2}}\Bigg)^{n i} r_0^n,
$$
it follows from the inclusion in \eqref{thmcantor_aux2} that
$$ 
\frac{|\DIFF\chi_{\GG_f}|\big((\Sigma\cap B_{r_0}(p))\setminus C_\gamma(p)\big)}{r_0^n}\le \delta\sum_i \left(\beta\sqrt{\frac{\gamma^2 +1}{\gamma^2}}\right)^{n i}.
$$
Thanks to the arbitrariness of \(\delta>0\) and the finiteness of \(\sum_i\big(\beta\sqrt{\frac{\gamma^2+1}{\gamma^2}}\big)^{ni}\),
\eqref{thmcantor_claim3} is proved.

Let now $\epsilon'>0$. We wish to show that there exists a set $\Sigma'\subseteq\Sigma$ with $\HH^n(\Sigma\setminus \Sigma')<\epsilon'$
and such that there exists $r_0\in (0,1)$ satisfying
\begin{equation}\label{simonclaim}
(\Sigma'\cap B_{r_0}(p))\setminus C_{2 \gamma}(p)=\emptyset,\quad\text{ for every }p\in\Sigma'.
\end{equation}
We do it using a standard argument, see e.g.\ the proof of \cite[Theorem 1.6]{Simon}. By Egorov's Theorem, we can choose $\Sigma'\subseteq\Sigma$ Borel with
$\HH^n(\Sigma\setminus \Sigma')<\epsilon'$ and such that for any given $\delta'>0$, there exists $r_0\in(0,1)$ such that for every $r\in(0,2 r_0)$ and $p\in\Sigma'$,
\begin{subequations}\begin{align}\label{thmcantor_aux3}
\frac{|\DIFF\chi_{\GG_f}|(\Sigma \cap B_r(p))}{\Theta_{n+1}(\mass\otimes\mathcal L^1,p)\omega_n r^n}&\ge 1-\delta',\\
\label{thmcantor_aux4}
\frac{|\DIFF\chi_{\GG_f}|\big((\Sigma\cap B_r(p))\setminus C_\gamma(p)\big)}{\Theta_{n+1}(\mass\otimes\mathcal L^1,p)\omega_n r^n}&\le\delta';
\end{align}\end{subequations}
the former follows from the fact that \(\Theta_n(|\DIFF\chi_{\GG_f}|\llcorner\Sigma,p)=\Theta_{n+1}(\mass\otimes\mathcal L^1,p)\), the latter from
\eqref{thmcantor_claim3}. We aim to show that if $\delta'>0$ is small enough, then this choice of $\Sigma'$ and $r_0$ satisfies \eqref{simonclaim}.
Assume now that there exists $q\in(\Sigma'\cap B_{r_0}(p))\setminus C_{2 \gamma}(p)$ for some $p\in\Sigma'$. Then
\begin{equation}\label{thmcantor_aux5}
B_{\rho}(q)\subseteq  B_{\tilde\dist(p,q)+\rho}(p)\setminus C_{ \gamma}(p),\quad\text{ where }
\rho\defeq\tilde\dist(p,q)\sin(\arctan(2\gamma)-\arctan(\gamma)),
\end{equation}
where we denoted \(\tilde\dist\coloneqq\dist\times\dist_e\) for brevity. Therefore, we can estimate
\begin{align*}
	\delta'&\overset{\eqref{thmcantor_aux4}}\ge\frac{|\DIFF\chi_{\GG_f}|\big((\Sigma\cap B_{\tilde\dist(p,q)+\rho}(p))\setminus C_\gamma(p)\big)}
	{\Theta_{n+1}(\mass\otimes\mathcal L^1,p)\omega_n(\tilde\dist(p,q)+\rho)^n} \overset{\eqref{thmcantor_aux5}}\ge
	\frac{|\DIFF\chi_{\GG_f}|(\Sigma\cap B_\rho(q))}{\Theta_{n+1}(\mass\otimes\mathcal L^1,p)\omega_n(\tilde\dist(p,q)+\rho)^n}\\
	& \overset{\eqref{thmcantor_aux4}}\ge (1-\delta')\frac{\rho^n}{(\tilde\dist(p,q)+\rho)^n} 
	=(1-\delta')\frac{\big(\sin(\arctan(2\gamma)-\arctan(\gamma))\big)^n}{\big(1+\sin(\arctan(2\gamma)-\arctan(\gamma))\big)^n},
\end{align*}
which leads to a contradiction provided $\delta'>0$ was chosen small enough, proving \eqref{simonclaim}.

Finally, our aim is to show that
\begin{equation}\label{thmcantor_claim4}
|\DIFF\chi_{\GG_f}|(\Sigma')=0,
\end{equation}
since this, by the arbitrariness of \(\varepsilon'>0\), would imply \eqref{thmcantor_claim2} and accordingly the statement.
Take $p=(x,t)\in\Sigma'$. Since $\bar{f}$ is continuous on $\pi^1(\Sigma')$, there exists $r_1\in (0,r_0/\sqrt 2)$ such that
$|{\bar{f}(y)-\bar{f}(x)}|<r_0/\sqrt 2$ for all $y\in B_{r_1}(x)\cap\pi^1(\Sigma')$. As $\Sigma'\subseteq\{(x,t)\in\XX\times\RR\,:\,t=\bar{f}(x)\}$,
we see that $\Sigma'\cap(B_{r_1}(x)\times\RR)\subseteq\Sigma'\cap B_{r_0}(p)\subseteq  C_{2\gamma}(p)$ by \eqref{simonclaim}, so that,
setting \(\lambda\coloneqq\sqrt{1+4\gamma^2}\),
\begin{equation}\label{thmcantor_aux6}
\Sigma'\cap(B_r(x)\times\RR)\subseteq \Sigma'\cap B_{\lambda r}(p),\quad\text{ for every }r\in(0,r_1).
\end{equation}
It follows that for every \(p=(x,t)\in\Sigma'\) we have
\[\begin{split}
\overline\Theta_n\big(\pi^1_*(|\DIFF\chi_{\GG_f}|\llcorner\Sigma'),x\big)&=
\varlimsup_{r\searrow 0}\frac{|\DIFF\chi_{\GG_f}|\big(\Sigma'\cap(B_r(x)\times\RR)\big)}{\omega_n r^n}
\overset{\eqref{thmcantor_aux6}}\leq\varlimsup_{r\searrow 0}\frac{|\DIFF\chi_{\GG_f}|(\Sigma\cap B_{\lambda r}(p))}{\omega_n r^n}\\
&=\lambda^n\Theta_n(|\DIFF\chi_{\GG_f}|\llcorner\Sigma,p)=\lambda^n\Theta_{n+1}(\mass\otimes\mathcal L^1,p)\leq\lambda^n j,
\end{split}\]
where the last inequality stems from the inclusion \(\Sigma'\subseteq\mathcal F\GG_f\cap(C_f^{k,\alpha,j}\times\RR)\).
Therefore, by applying \cite[Theorem 2.4.3]{AmbrosioTilli04} and using the fact that \(\pi^1(\Sigma')\subseteq C_f\), we can conclude that
\[
|\DIFF\chi_{\GG_f}|(\Sigma')=\pi^1_*(|\DIFF\chi_{\GG_f}|\llcorner\Sigma')(\pi^1(\Sigma'))\leq(2\lambda)^n j\HH^n(\pi^1(\Sigma'))
\leq(2\lambda)^n j\HH^n(C_f)=0,
\]
thus obtaining \eqref{thmcantor_claim4}. Consequently, the statement is achieved.
\end{proof}
\begin{lem}\label{coincide}
Let $(\XX,\dist,\mass)$ be an $\RCD(K,N)$ space of essential dimension $n$. Fix a function $f\in\BVv$ and a good collection \(\{\boldsymbol u_\eta\}_\eta\)
of splitting maps on \(\XX\). Let $C_f$ be as in the statement of Proposition \ref{prop:def_set_C_f}. Then for any \(\eta\in(0,n^{-1})\cap\QQ\) it holds that
$$ \nu_f^{\boldsymbol u_\eta}(x)=\big(\nu_{\GG_f}^{\boldsymbol u_\eta}(x,\bar{f}(x))\big)_{1,\dots,n},	\qquad\text{for }\abs{\DIFF f}\mres C_f\text{-a.e.\ $x\in\XX$}.$$
\end{lem}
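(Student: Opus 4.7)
The plan is to derive the identity by comparing the Gauss--Green formula (\Cref{thm:Intro}) applied to $\chi_{\GG_f}$ on $\XX\times\RR$ with the one applied to a composition $\Psi\circ f$ on $\XX$, for an arbitrary bump $\Psi\in C_c^\infty(\RR)$. Fix $\eta$ and $\ell$; since $\{D_{\eta,k}\}_k$ partitions $D_\eta\supseteq C_f$ up to $|\DIFF f|$-null sets, it suffices to prove the identity on each $C_f\cap D_{\eta,k}$ separately, writing $u=u^\ell_{\eta,k}$. For arbitrary $\varphi\in\mathrm{Test}^\infty(\XX)$ compactly supported in a ball $\Subset B_{r_k}(x_k)$ and $\Psi\in C_c^\infty(\RR)$ with $\psi=\Psi'$, I would consider the horizontal test vector field $w(x,t)=\psi(t)\varphi(x)\nabla u(x)$ on $\XX\times\RR$, which is admissible in $\TestV(\XX\times\RR)$ via the product structure of the test function algebra (extending $u$ by a test cutoff so that the local harmonic map becomes a bona fide element of $\mathrm{Test}^\infty$). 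Since $\mathrm{div}(w)(x,t)=\psi(t)\,\mathrm{div}_\XX(\varphi\nabla u)(x)$, Fubini together with Gauss--Green for $\chi_{\GG_f}$ yield
\[
\int_\XX \Psi(f(x))\,\mathrm{div}(\varphi\nabla u)\,\de\mass \;=\; -\int \psi(t)\varphi(x)(\nu_{\GG_f}\cdot\nabla u)(x,t)\,\de|\DIFF\chi_{\GG_f}|,
\]
while applying Gauss--Green to $\Psi(f)\in\BV(\XX)$ with test vector field $\varphi\nabla u$ gives
\[
\int_\XX \Psi(f)\,\mathrm{div}(\varphi\nabla u)\,\de\mass \;=\; -\int \varphi\,(\nu_{\Psi(f)}\cdot\nabla u)\,\de|\DIFF\Psi(f)|.
\]

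Equating the right-hand sides and exploiting the arbitrariness of $\varphi$ produces an identity of signed Radon measures on $\XX$:
\[
\pi^1_*\big[\psi(t)(\nu_{\GG_f}\cdot\nabla u)(x,t)\,|\DIFF\chi_{\GG_f}|\big] \;=\; (\nu_{\Psi(f)}\cdot\nabla u)\,|\DIFF\Psi(f)|.
\]
I would then restrict both sides to $C_f$. On the right-hand side, since $C_f\subseteq\XX\setminus J_f$ and $\mass(C_f)=0$, the BV chain rule gives $\nu_{\Psi(f)}\,|\DIFF\Psi(f)|\mres C_f = \psi(\bar f)\,\nu_f\,|\DIFF f|\mres C_f$. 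On the left-hand side, \Cref{prop:behaviour_TV_PI} combined with item (ii) of \Cref{prop:def_set_C_f} give $|\DIFF\chi_{\GG_f}|\mres(C_f\times\RR)=(\mathrm{id}_\XX,\bar f)_*\big(|\DIFF f|\mres C_f\big)$, so the pushforward restricted to $C_f$ equals $\psi(\bar f)(\nu_{\GG_f}\cdot\nabla u)(\cdot,\bar f(\cdot))\,|\DIFF f|\mres C_f$. Combining,
\[
\psi(\bar f)\,\big[(\nu_{\GG_f}\cdot\nabla u)(\cdot,\bar f(\cdot))-(\nu_f\cdot\nabla u)\big]\,|\DIFF f|\mres C_f \;=\; 0.
\]
Running this with a countable family of bumps $\Psi_n$ whose derivatives $\psi_n$ have supports covering $\RR$, and invoking \eqref{eq:bar_f_finite} so that $\bar f$ is $|\DIFF f|$-a.e.\ finite on $C_f$, extracts the desired pointwise identity $(\nu_{\GG_f}\cdot\nabla u)(x,\bar f(x))=(\nu_f\cdot\nabla u)(x)$ for $|\DIFF f|\mres C_f$-a.e.\ $x$.

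The main obstacle is the justification of the BV chain rule $\DIFF\Psi(f)=\Psi'(\bar f)\,\DIFF f$ on $\XX\setminus J_f$, together with the corresponding identity $\nu_{\Psi(f)}=\mathrm{sign}(\Psi'(\bar f))\,\nu_f$ there; this is classical in the Euclidean setting but requires care in the $\RCD$ framework and would need to be cited from or proved using the vector-valued BV calculus of \cite{BGBV}. An alternative route, which bypasses the chain rule altogether, is to use the coarea decomposition $|\DIFF f|=\int_\RR|\DIFF\chi_{\{f>s\}}|\,\de s$ together with the analogous slicing $\chi_{\GG_f}(\cdot,t)=\chi_{\{f>t\}}$, which expresses both sides of the desired identity directly in terms of the perimeter measures of the superlevel sets $\{f>s\}$ and reduces the statement to the (easier) case of indicators of sets of finite perimeter.
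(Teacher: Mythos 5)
Your proposal takes a genuinely different route from the paper. The paper first applies the coarea formula to reduce the identity to one between $\nu_{E_t}$ and $(\nu_{\GG_f})_{1,\dots,n}$ on level sets $E_t=\{f>t\}$ (invoking \cite[Lemma 3.27]{BGBV} to replace $\nu_f$ by $\nu_{E_t}$), then runs a blow-up argument: at $p=(x,\bar f(x))$ the rescaled subgraph converges to a halfspace $H\subseteq\RR^{n+1}$ while $E_t\times(-\infty,t)$ converges to $H'\times(-\infty,0)$, and the inclusion $H'\times(-\infty,0)\subseteq H\cap\{s<0\}$ forces $\nu_{E_t}^{\boldsymbol u_\eta}(x)=\lambda(x)\big(\nu_{\GG_f}^{\boldsymbol u_\eta}(p)\big)_{1,\dots,n}$ for some $\lambda(x)\in[0,1]$; the normalisation $\lambda=1$ is then extracted from Theorem \ref{thmcantor}. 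Your argument instead tests the Gauss--Green formula on $\XX\times\RR$ against horizontal vector fields $\psi(t)\varphi(x)\nabla u(x)$ and compares with Gauss--Green for $\Psi\circ f$ on $\XX$; the Fubini step and the subsequent restriction to $C_f$ via Proposition \ref{prop:behaviour_TV_PI} and item (ii) of Proposition \ref{prop:def_set_C_f} are correct. If it went through, your argument would be more self-contained in the sense that it would not invoke Theorem \ref{thmcantor} inside this lemma (though Theorem \ref{thmcantor} would still be needed later, in Lemma \ref{rank1lem0} and Lemma \ref{mainlemma}).

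The gap is exactly the one you flag: the chain rule $\nu_{\Psi(f)}\,|\DIFF\Psi(f)|\mres C_f=\Psi'(\bar f)\,\nu_f\,|\DIFF f|\mres C_f$ on the Cantor part. This is not a corollary of anything already established in the paper or visibly stated in \cite{BGBV}; even in the Euclidean case it is the hardest piece of Vol'pert's theorem. What the paper actually uses, \cite[Lemma 3.27]{BGBV}, is the level-set statement $\nu_f=\nu_{E_t}$ on $\mathcal F E_t$ for a.e.\ $t$, which is weaker than (and in fact a special case of) the Cantor-part chain rule. So your main route trades the paper's blow-up for a nontrivial new lemma; as written the proof is incomplete. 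Your ``alternative route'' --- reducing via coarea and the slicing $\chi_{\GG_f}(\cdot,t)=\chi_{\{f>t\}}$ to the finite-perimeter case --- is in fact the opening move of the paper's own argument, but it does not finish on its own: once one is down to $\nu_{E_t}$ versus $(\nu_{\GG_f})_{1,\dots,n}$ one still has to compare a horizontal normal on $\XX$ against a normal on $\XX\times\RR$ evaluated along the graph, and the paper does this by blow-up (items (iv), (v) of Proposition \ref{prop:def_set_C_f}) plus Theorem \ref{thmcantor}. That remaining step is the mathematical content you would still need to supply even along the coarea route.
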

\begin{proof}
Recall that $\abs{\DIFF f}\mres C_f=\pi^1_*\big(|{\DIFF\chi_{\GG_f}}|\mres(C_f\times\RR)\big)$, so that the statement makes sense. By the coarea formula,
it is enough to show that for a.e.\ $t$ it holds $\nu_f^{\boldsymbol u_\eta}(x)=\big(\nu_{\GG_f}^{\boldsymbol u_\eta}(x,\bar{f}(x))\big)_{1,\dots,n}$ for
$\HH^{n-1}$-a.e.\ $x\in\mathcal F E_t\cap C_f$, where we define $E_t\defeq\{f>t\}$. Taking \cite[
Lemma 3.27]{BGBV}
into account, we see that it is sufficient to prove that for a.e.\ \(t\), and for every \(k\in\NN\) it holds
\begin{equation}\label{eq:coincide_claim}
\nu_{\chi_{E_t}}^{\boldsymbol u_\eta}(x)=\big(\nu_{\GG_f}^{\boldsymbol u_\eta}(x,\bar{f}(x))\big)_{1,\dots,n},
\quad\text{ for }\mathcal H^{n-1}\text{-a.e.\ }x\in\mathcal F E_t\cap C_f\cap D_{\eta,k}.
\end{equation}
Let $x\in \mathcal F E_t\cap C_f\cap D_{\eta,k}$ be a given point where the conclusions of Proposition \ref{prop:conv_good_coord}
hold with \(E=E_t\); notice that \(\mathcal H^{n-1}\)-a.e.\ point of \(\mathcal F E_t\cap C_f\cap D_{\eta,k}\) has this property.
We aim to show that the identity in \eqref{eq:coincide_claim} is verified at \(x\). Denote $p\defeq (x,\bar{f}(x))$ for brevity. Thanks to item
(i) of Remark \ref{rem:properties_FE} and item (v) of Proposition \ref{prop:def_set_C_f}, we can find a sequence \(r_i\searrow 0\),
halfspaces \(H\subseteq\RR^{n+1}\) and \(H'\subseteq\RR^n\), and a proper metric space \((\ZZ,\dist_\ZZ)\) such that
\begin{subequations}\begin{align}\label{eq:conv_halfs_1}
\big(\XX,r_i^{-1}\dist,\mass_x^{r_i},x,E_t\big)&\rightarrow(\RR^n,\dist_e,\underline{\mathcal L}^n,0,H'),\\ \label{eq:conv_halfs_2}
\big(\XX\times\RR,r_i^{-1}\dist_{\XX\times\RR},(\mass\otimes\HH^1)_p^{r_i},p,\GG_f\big)&\rightarrow(\RR^{n+1},\dist_e,\underline{\mathcal L}^{n+1},0,H),
\end{align}\end{subequations}
in the realizations \(\ZZ\) and \(\ZZ\times\RR\), respectively. Notice also that
\begin{equation}\label{eq:conv_halfs_3}
\big\{(y,s)\in\XX\times\RR\;\big|\;s<t\big\}\rightarrow\big\{(y,s)\in\RR^n\times\RR\;\big|\;s<0\big\}\qquad\text{in }L^1_{\mathrm{loc}},
\end{equation}
in the realization \(\ZZ\times\RR\). Therefore, by stability, we deduce from \eqref{eq:conv_halfs_2} and \eqref{eq:conv_halfs_3} that
\[
\big\{(y,s)\in\XX\times\RR\;\big|\;s<f(y),s<t\big\}\rightarrow H\cap\big\{(y,s)\in\RR^n\times\RR\;\big|\;s<0\big\} \qquad\text{in }L^1_{\mathrm{loc}}.
\]
Recalling \eqref{eq:conv_halfs_1}, and using Fubini's Theorem and dominated convergence, we see that 
\[
E_t\times (-\infty,t)\rightarrow H' \times(-\infty,0) \qquad\text{in $L^1_{\mathrm{loc}}$}.
\]
Given that \(E_t\times(-\infty,t)\subseteq\big\{(y,s)\in\XX\times\RR\,:\,s<f(y),s<t\big\}\), we obtain that 
\[
H'\times(-\infty,0)\subseteq H\cap\big\{(y,s)\in\RR^n\times\RR\;\big|\;s<0\big\}.
\]
Thanks to our choice of \(x\) and to item (iv) and item (v) of Proposition \ref{prop:def_set_C_f}, we can see that
 \(\nu_{\chi_{E_t}}^{\boldsymbol u_\eta}(x)\) and \((\nu_{\GG_f}^{\boldsymbol u_\eta}(p))_{1,\dots,n}\) have the same direction, namely there exists $\lambda(x)\in[0,1]$ such that \(\nu_{\chi_{E_t}}^{\boldsymbol u_\eta}(x)=\lambda(x)(\nu_{\GG_f}^{\boldsymbol u_\eta}(p))_{1,\dots,n}\). Now notice that the conclusion of Theorem \ref{thmcantor} forces $\lambda(x)=1$, up to discarding a $|\DIFF f|\mres C_f$-negligible set.
\end{proof}
\subsection{Rank-One Theorem}\label{sec:Final}
In this final subsection we prove \Cref{thm:RankOne}.
{We first start with an auxiliary definition and a technical result taken from \cite{bru2019rectifiability}.
\medskip

Let \((\XX,\dist,\mass)\) be an \(\RCD(K,N)\) space of essential dimension \(n\) and \(E\subseteq\XX\) a set of
locally finite perimeter.
Let \(\varepsilon>0\) and \(r>0\) be given. Then, following \cite[Definition 4.6]{bru2019rectifiability}, we define
\((\mathcal F_n E)_{r,\varepsilon}\) as the set of all those points \(x\in\mathcal F_n E\)
such that
\[\begin{split}
&\dist_{\rm pmGH}\big((\XX,s^{-1}\dist,\mass_x^s,x),(\RR^n,\dist_e,\underline{\mathcal L}^n,0)\big)<\varepsilon,\\
&\bigg|\frac{\mass(E\cap B_s(x))}{\mass(B_s(x)}-\frac{1}{2}\bigg|+
\bigg|\frac{s|\DIFF\chi_E|(B_s(x))}{\mass(B_s(x))}-\frac{\omega_{n-1}}{\omega_n}\bigg|<\varepsilon
\end{split}\]
for every \(s\in(0,r)\). We remark that, for every $x\in \FF_n E$, and for every $\epsilon>0$, there exists $r>0$ such that $x\in (\FF_n E)_{r,\epsilon}$.
We now recall the following result, which was proved in \cite[Proposition 4.7]{bru2019rectifiability}.
\begin{prop}\label{prop:from_delta-splitt_to-biLip}
Let \((\XX,\dist,\mass)\) be an \(\RCD(K,N)\) space of essential dimension \(n\). Let \(E\subseteq\XX\) be a set
of locally finite perimeter. Then for any \(\eta>0\) there exists \(\varepsilon=\varepsilon(N,\eta)>0\) such that
the following property is verified: if \(p\in(\mathcal F_n E)_{2r,\varepsilon}\) for some \(0<r<|K|^{-1/2}\) and
there exists an \(\varepsilon\)-splitting map \(u\colon B_{2r}(p)\to\RR^{n-1}\) such that
\[
\frac{r}{\mass(B_{2r}(p))}\int_{B_{2r}(p)}|\nu_E\cdot\nabla u^\ell|\,\dd|\DIFF\chi_E|<\varepsilon,
\quad\text{ for every }\ell=1,\ldots,n-1,
\]
then there exists a Borel set \(G\subseteq B_r(p)\) with \(\HH^h_5(B_r(p)\setminus G)\leq C_N\eta\frac{\mass(B_r(p))}{r}\)
such that
\[
u\colon G\cap(\mathcal F_n E)_{2r,\varepsilon}\to\RR^{n-1}\quad\text{ is biLipschitz onto its image.}
\]
\end{prop}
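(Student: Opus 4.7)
The plan is to combine the automatic upper-Lipschitz bound for $\varepsilon$-splitting maps with a bi-Lipschitz lower bound obtained on a large subset via a maximal-function / segment-inequality argument. The critical role of the tangential hypothesis $\int_{B_{2r}(p)} |\nu_E \cdot \nabla u^\ell| \, d|\DIFF\chi_E|$ small (relative to $\mass(B_{2r}(p))/r$) is to transfer the $\varepsilon$-splitting information from the bulk measure $\mass$ to the codimension-one measure $|\DIFF\chi_E|$ supported on $\mathcal{F}_n E$, so that the ``almost-Euclidean'' geometry detected by $u$ persists when restricted to the reduced boundary.

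First I would handle the upper bound: by the pointwise gradient estimate for splitting maps on $\mathrm{RCD}(-\delta,N)$ balls (Brué--Naber--Semola, mentioned right after the definition of splitting map), $u$ is $(1 + C_N \varepsilon^{1/2})$-Lipschitz on $B_r(p)$. Next I would build the exceptional set by a Chebyshev argument. Set
\[
\Phi(x) \coloneqq \sup_{s \in (0,r)} \Bigl( s^2 \fint_{B_s(x)} |\mathrm{Hess}(u)|^2 \, d\mass + \sum_{\ell,j} \fint_{B_s(x)} |\nabla u^\ell \cdot \nabla u^j - \delta_{\ell j}| \, d\mass + \sum_{\ell} \frac{s \int_{B_s(x)} |\nu_E \cdot \nabla u^\ell|\, d|\DIFF\chi_E|}{\mass(B_s(x))} \Bigr),
\]
and let $G \coloneqq \{x \in B_r(p) : \Phi(x) \le \eta^2\}$. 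A standard Vitali covering, together with the density comparison $s|\DIFF\chi_E|(B_s)/\mass(B_s) \sim \omega_{n-1}/\omega_n$ built into $(\mathcal{F}_n E)_{2r,\varepsilon}$, produces $\mathcal{H}^h_5(B_r(p) \setminus G) \le C_N \eta\, \mass(B_r(p))/r$, provided $\varepsilon \le \varepsilon(N,\eta)$ is chosen small enough.

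The core step is showing that $u\colon G \cap (\mathcal{F}_n E)_{2r,\varepsilon} \to \RR^{n-1}$ is bi-Lipschitz. For $x,y$ in this intersection I would run the Cheeger--Colding segment inequality, which on $\mathrm{RCD}(K,N)$ spaces controls the integral of a nonnegative function along the (essentially unique) geodesic between $x$ and $y$ by a bulk integral, and hence by $\Phi(x)+\Phi(y) \le 2\eta^2$. Applied to $|\nabla u - \mathrm{Id}|^2$ along the geodesic, this yields
\[
|u(x)-u(y)| \ge (1 - C_N\eta)\, \dist(x,y) - C_N\eta\, |\langle y - x, \nu \rangle|,
\]
where $\nu$ is the approximate common direction of $\nu_E$ around $p$ at scale $\dist(x,y)$. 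Since both $x$ and $y$ lie in $(\mathcal{F}_n E)_{2r,\varepsilon}$, the quantitative halfspace approximation at scale $\dist(x,y)$ forces the ``vertical'' term $|\langle y - x, \nu \rangle|$ to be $o(\dist(x,y))$ as $\varepsilon \searrow 0$, so bi-Lipschitzness follows after fixing $\eta$ small.

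The main obstacle is making the segment inequality interact cleanly with the normal-direction displacement: at a single scale the bound on $|\langle y-x,\nu\rangle|$ degenerates when $\dist(x,y)$ is small, so one needs a dyadic Reifenberg-type iteration on scales $2^{-k}\dist(x,y)$ that patches together the halfspace approximations coming from the definition of $(\mathcal{F}_n E)_{2r,\varepsilon}$. The tangential hypothesis $\nu_E \cdot \nabla u^\ell \approx 0$ against $|\DIFF\chi_E|$ is exactly the input that forces, at each scale of the iteration, the Euclidean frame associated with $u$ (from the $\varepsilon$-splitting condition) and the tangent hyperplane of $\partial E$ (from $(\mathcal{F}_n E)_{2r,\varepsilon}$) to be compatible; without it the iteration would not converge, and $u$ could fold along the normal direction on $\mathcal{F}_n E$.
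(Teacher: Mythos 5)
This proposition is not proved in the paper at hand: the authors explicitly recall it from \cite[Proposition~4.7]{bru2019rectifiability}, so there is no in-paper proof to compare against. Your sketch reproduces the correct overall skeleton of the cited argument: an upper Lipschitz bound for the splitting map from the pointwise gradient estimate; a good set $G$ built by a maximal-function/Chebyshev argument, plus a Vitali covering, so that the splitting-error and tangential-error averages remain small at every scale; and a lower bound for $|u(x)-u(y)|$ using the interaction between the $\varepsilon$-splitting condition and the tangential hypothesis to rule out folding along the normal. You also correctly identify the essential danger --- a map into $\RR^{n-1}$ could collapse the chord between two boundary points if it aligned with $\nu_E$ --- and correctly single out the tangential hypothesis as the input that prevents this.

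However, the lower-bound step is too imprecise to be judged complete. First, the quantity $|\nabla u - \mathrm{Id}|^2$ is not intrinsically defined on an $\RCD$ space: there is no canonical orthonormal frame ``$\mathrm{Id}$'' to subtract. What one controls is $\sum_{\ell,j}|\nabla u^\ell\cdot\nabla u^j-\delta_{\ell j}|$ together with $|{\rm Hess}(u)|^2$, and the passage from these integral bounds (via the segment inequality) to a quantitative lower bound on $\nabla u^\ell\cdot\dot\gamma$ along the geodesic $\gamma_{xy}$ is exactly where the content lies; you state the desired lower bound rather than derive it. Second, the quantity $\langle y-x,\nu\rangle$ is likewise not a priori meaningful in $\XX$; to interpret it one must pass to the Euclidean tangent cone via the pmGH closeness encoded in $(\mathcal F_nE)_{2r,\varepsilon}$, the $L^1$-closeness of $E$ to a halfspace at every scale, and the locally uniform convergence of rescaled splitting maps to Euclidean coordinate projections (Remark~\ref{convdelta}). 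The cited proof works with these GH approximations scale by scale, and your ``dyadic Reifenberg iteration'' is a reasonable description of such a propagation, but as written you give no indication of how the error terms sum across scales, why the biLipschitz constant only depends on $N$ and $\eta$, or how the tangential condition enters quantitatively at each scale to align the frame $\{\nabla u^\ell\}$ with the approximate tangent hyperplane of $\partial E$. These are the real technical obstacles, and the proposal currently names them without resolving them.
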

}

We pass to the following Lemma, which is the technical core of the proof of \Cref{thm:RankOne}.
\begin{lem}\label{mainlemma}
Let $(\XX,\dist,\mass)$ be an $\RCD(K,N)$ space of essential dimension $n$. Fix any two functions $f,g\in\BVv$.
Let \(\{\boldsymbol u_\eta\}_\eta\) be a good collection of splitting maps on \(\XX\). Let us consider the sets
\(C_f,C_g\subseteq\XX\) given by Proposition \ref{prop:def_set_C_f}. {Let $\tau$ be the inversion map defined in \eqref{eqn:Tau}, and let}
\begin{equation*}
\begin{aligned}
	\Sigma_f&\defeq\mathcal F\GG_f\cap (C_f\times \RR),\quad&&\tilde{\Sigma}_f\defeq\Sigma_f\times\RR,\\
		\Sigma_g&\defeq\mathcal F\GG_g\cap (C_g\times \RR),&&\tilde{\Sigma}_g\defeq\tau(\Sigma_g\times\RR).
\end{aligned}
\end{equation*}
Moreover, let us set \(R\coloneqq\pi^1(\tilde R)\subseteq\XX\), where the set \(\tilde R\subseteq\XX\times\RR^2\) is defined as
\begin{equation}\label{eqn:SigmaFSigmag}
\bigcap_{\substack{\eta\in\QQ, \\ 0<\eta<n^{-1}}}\Big\{(x,t,s)\in \tilde{\Sigma}_f\cap \tilde{\Sigma}_g\,\Big|\,\nu_{\GG_f}^{\boldsymbol u_\eta}(x,t)\ne\pm\nu_{\GG_g}^{\boldsymbol u_\eta}(x,s),\,(\nu_{\GG_f}^{\boldsymbol u_\eta}(x,t))_{n+1}=(\nu_{\GG_g}^{\boldsymbol u_\eta}(x,s))_{n+1}=0\Big\}.
\end{equation}
Then it holds that $$(\abs{\DIFF f}\wedge\abs{\DIFF g})(R)=0.$$
\end{lem}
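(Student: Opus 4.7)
The plan is to adapt the Massaccesi--Vittone transversality argument to the RCD setting via the good collection of splitting maps, which will play the role of biLipschitz Euclidean charts. By Theorem \ref{thm:repr_per} combined with the coarea formula, $|Df|$ and $|Dg|$ are absolutely continuous with respect to $\mathcal H^{n-1}$, so it suffices to show that $R$ is $\mathcal H^{n-1}$-negligible in $\mathsf X$. Since $\tilde R$ is, by definition, contained in the analogous set defined using only a single splitting family $\boldsymbol u_\eta$, it is enough to fix an arbitrary $\eta \in (0, n^{-1}) \cap \mathbb Q$ and prove $\mathcal H^{n-1}$-negligibility of the corresponding single-$\eta$ projection onto $\mathsf X$. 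Moreover, $\{D_{\eta, k}\}_{k \in \mathbb N}$ partitions $D_\eta$, which carries the full mass of $|Df|$ and $|Dg|$, so by countable additivity we further fix $k \in \mathbb N$, set $D \coloneqq D_{\eta, k}$ and $u \coloneqq u_{\eta, k}$, and reduce the problem to proving that the intersection of this single-$\eta$ bad set with $D$ is $\mathcal H^{n-1}$-negligible in $\mathsf X$.

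The next step is to transfer the problem to Euclidean space. By Lusin's theorem applied to the matrix-valued map $A_\eta(\cdot)$ of Definition \ref{defn:GoodCollection}, we may subdivide $D$ into countably many Borel pieces on each of which $A_\eta$ is uniformly close to a constant invertible matrix; after composition with this matrix we may assume that $u$ is $\eta'$-splitting on small balls for some $\eta' \to 0$ as $\eta \to 0$. By further partitioning according to the direction of $\nu_{\mathcal G_f}^{\boldsymbol u_\eta}$ and $\nu_{\mathcal G_g}^{\boldsymbol u_\eta}$ --- both of which have vanishing $(n+1)$-th entry by the defining condition of the bad set and by Theorem \ref{thmcantor} --- we may, on each piece, isolate distinguished indices $i, j \in \{1, \dots, n\}$ such that Proposition \ref{prop:from_delta-splitt_to-biLip}, applied to $\mathcal G_f$ (resp.\ $\mathcal G_g$) in $\mathsf X \times \mathbb R$ with the $n$-tuple $(u^1, \dots, \widehat{u^i}, \dots, u^n, \pi^2)$ (resp.\ with $u^j$ omitted), yields biLipschitz parametrizations of Borel subsets of $\Sigma_f$ and $\Sigma_g$ of arbitrarily high $|D\chi_{\mathcal G_f}|$- and $|D\chi_{\mathcal G_g}|$-density. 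Restoring the missing coordinate (a Lipschitz function of the others on the chart image) one checks that the product map
\[
U \coloneqq (u, \mathrm{id}_{\mathbb R}) \colon \mathsf X \times \mathbb R \to \mathbb R^{n+1}
\]
is itself biLipschitz on these Borel subsets, pushing them forward to $n$-rectifiable sets $\Sigma_f', \Sigma_g' \subseteq \mathbb R^{n+1}$. The quantitative $\eta'$-splitting bounds, together with Lemma \ref{coincide} and Theorem \ref{thmcantor}, ensure that the Euclidean approximate normals of $\Sigma_f'$ and $\Sigma_g'$ coincide, up to an error vanishing with $\eta$, with the chart-normals $\nu_{\mathcal G_f}^{\boldsymbol u_\eta}$ and $\nu_{\mathcal G_g}^{\boldsymbol u_\eta}$.

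With the problem transferred to $\mathbb R^{n+1}$, I would run the Euclidean Massaccesi--Vittone argument. Embed $\Sigma_f' \times \mathbb R$ and $\tau(\Sigma_g' \times \mathbb R)$ as two countably $\mathcal H^{n+1}$-rectifiable subsets of $\mathbb R^{n+2}$. Up to an $\eta$-vanishing perturbation, the image under $u$ of the bad set intersected with $D$ is the projection, onto the first $n$ Euclidean coordinates of $\mathbb R^{n+2}$, of the subset of the intersection on which the ambient Euclidean normals $N_f, N_g$ satisfy $(N_f)_{n+1} = (N_g)_{n+1} = 0$ and $N_f \ne \pm N_g$. At $\mathcal H^n$-a.e.\ such intersection point, these ambient normals take the form $(\nu_f, 0, 0)$ and $(\nu_g, 0, 0)$ with $\nu_f, \nu_g \in \mathbb R^n$ linearly independent; so the intersection is transverse, and its approximate tangent is an $n$-dimensional subspace of $\mathbb R^{n+2}$ that contains $\mathrm{span}\{e_{n+1}, e_{n+2}\}$. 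Projecting onto the first $n$ Euclidean coordinates kills these two dimensions, so the image has approximate tangent of dimension at most $n - 2$, hence is $\mathcal H^{n-1}$-negligible in $\mathbb R^n$.

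Pulling back via the biLipschitz chart $u$, we conclude that the single-$\eta$ bad set intersected with $D$ and with the high-density Borel subset produced above is $\mathcal H^{n-1}$-negligible in $\mathsf X$; the exceptional low-density complement is itself $|Df| \wedge |Dg|$-null by item v) of Proposition \ref{prop:def_set_C_f}. Summing over the countable Borel decomposition and over $k$ yields the lemma. The main difficulty, as emphasized in the introduction, is that the charts $U$ are biLipschitz only on Borel sets rather than on open neighbourhoods: one has to run all density and blow-up arguments within the measurable category, uniformly absorb the $\eta'$-splitting errors as $\eta \to 0$, and decisively use Lemma \ref{coincide} and Theorem \ref{thmcantor} to pass the two pointwise conditions on $\nu^{\boldsymbol u_\eta}$ (vanishing last component and non-parallelism) into genuine Euclidean transversality for the pushforward sets.
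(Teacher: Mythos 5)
Your overall strategy matches the paper's --- both use the good collection of splitting maps as biLipschitz charts onto Euclidean subsets, apply Proposition \ref{prop:from_delta-splitt_to-biLip}, and then invoke the Massaccesi--Vittone transversality idea in $\RR^{n+2}$. However, there are two concrete gaps.

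The most serious one is the final step. You reduce to showing $\HH^{n-1}(R)=0$ in $\XX$, and deduce it by asserting that since the transverse intersection $\tilde T'\subset\RR^{n+2}$ is $n$-rectifiable with approximate tangent containing $\mathrm{span}\{e_{n+1},e_{n+2}\}$, the projection onto the first $n$ coordinates ``has approximate tangent of dimension at most $n-2$, hence is $\HH^{n-1}$-negligible''. This is not justified. The Lipschitz image of an $n$-rectifiable set need not be rectifiable, and the bare area formula only gives $\HH^n$-negligibility of the projection when the $n$-dimensional tangential Jacobian vanishes; it says nothing about $\HH^{n-1}$. Even if you first replace $\tilde T'$ by countably many $C^1$ pieces whose projections are $(n-2)$-manifolds, you still lose control of the projection of the residual $\HH^n$-null part of $\tilde T'$, which can project to a set of positive $\HH^{n-1}$-measure. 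The paper sidesteps this entirely: it projects only onto $\XX\times\RR$ via $\pi^{1,2}$ (killing one direction, not two) and aims for $\HH^n$-negligibility of $\pi^{1,2}(\tilde R')$. Even then the area formula is not enough; the paper needs a quantitative density estimate at every point of $\tilde R'$ (its Step 4, a genuine blow-up argument relying on Step 2's ``one-sided Kuratowski convergence'') combined with a Vitali covering and $\sigma$-finiteness (Step 5). If you insist on $\HH^{n-1}$-negligibility of $\pi^1(\tilde R')$, you would have to establish an analogous pointwise density estimate $\HH^{n-1}_5(\pi^1(\tilde R'\cap B_r(p)))/r^n\to 0$; your appeal to ``approximate tangent of the image'' does not furnish this.

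A second, smaller flaw is in how you propose to get biLipschitz charts from Proposition \ref{prop:from_delta-splitt_to-biLip}. You suggest ``isolating distinguished indices $i,j$'' and using $(u^1,\dots,\widehat{u^i},\dots,u^n,\pi^2)$. But that proposition requires the $n$-tuple $u$ (with $n-1$ components of the normal small) to be an $\epsilon$-splitting map with $\frac{r}{\mass(B_{2r})}\int|\nu_E\cdot\nabla u^\ell|<\epsilon$ for $\ell=1,\dots,n-1$; simply discarding one coordinate does not make the remaining normal components small when $\nu^{\boldsymbol u_\eta}$ is, say, the vector $(n^{-1/2},\dots,n^{-1/2})$. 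The paper instead applies a point-dependent rotation $B\in SO(n+1)$ so that $w=Bv$ has all but the last normal component small, applies Proposition \ref{prop:from_delta-splitt_to-biLip} to $(w^1,\dots,w^n)$, and then observes that $v$ itself is biLipschitz on the same Borel set since $v$ and $w$ differ by an orthogonal transformation.

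Finally, you correctly flag the central difficulty --- that the charts are biLipschitz only on Borel sets, not on open neighbourhoods --- but your proof does not actually overcome it: you do not say how to deduce transversality of the pushed-forward sets from a blow-up that, by construction, is only seen ``through'' a Borel subset. This is precisely what the paper's Step 2 (almost one-sided Kuratowski convergence of $(\FF_{n+1}\GG_f)_{2r_i,\epsilon_i}$ to $\partial H$) is designed to control, and it is also used in Step 4.
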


\begin{proof}
Let us fix a ball $\bar{B}$ in $\XX$, set 
$$
\Omega_f\defeq (C_f\times\RR)\cap( \bar{B}\times\RR)\cap \FF\GG_f,
$$
and define similarly $\Omega_g$. 

For $i\in\NN$, set $\eta_i\defeq 2^{-i}\eta_0$. Here $\eta_0\in(0,n^{-1})\cap\QQ$ satisfies $\eta_0 C_N<1$, where $C_N$ is given in {\Cref {prop:from_delta-splitt_to-biLip}}. 
We claim that for every $i$ there exists a decomposition of the kind
\[
\Omega_f=G_i(f)\cup M_i(f) \cup R_i(f),
\]
and similarly for $g$, for which the following hold.
\begin{itemize}
    \item We have the inequality 
\begin{equation}\label{firstbullet}
	    \HH^h_5 (M_i(f))+|\DIFF\chi_{\GG_f}|(R_i(f))\le C_{K,N}\eta_i{(|\DIFF\chi_{\GG_f}|(\bar B\times\RR)+1)}
\end{equation}
and similarly for $g$, where $C_{K,N}$ is, in particular, independent of $i$.
    \item 
Set $\hat{G}_i(f)\defeq\pi^1 (G_i(f))$ and $\hat{G}_i(g)\defeq\pi^1 (G_i(g))$. Define similarly $\hat{M}_i(f)$, $\hat{M}_i(g)$, $\hat{R}_i(f)$, and $\hat{R}_i(g)$.
Then
\begin{equation}\label{secondbullet}
	(\abs{\DIFF f}\wedge\abs{\DIFF g})(R\cap \hat{G}_i(f)\cap \hat{G}_i(g))=0.
\end{equation}
\end{itemize}

\medskip
We show now how this decomposition allows us to conclude the proof of the lemma. We set 
$$
\hat{G}\defeq\bigcup_{i\in\NN}\hat{G}_{i}(f)\cap \hat{G}_i(g).
$$
As \eqref{secondbullet} implies  that
$$(\abs{\DIFF f}\wedge\abs{\DIFF g})(R\cap \hat G)=0,$$
it suffices to show (recall that $R\subseteq C_f\cap C_g$) $$(\abs{\DIFF f}\wedge\abs{\DIFF g})((C_f\cap C_g\cap\bar{B})\setminus\hat G)=0,$$
as the ball $\bar{B}$ was arbitrary.

Let us go through the proof of the last equality. Notice that for every $i$,
\begin{align*}
	(\abs{\DIFF f}\wedge\abs{\DIFF g})((C_f\cap C_g\cap\bar{B})\setminus\hat{G})\le \abs{\DIFF f}(\hat{M}_i(f)\cup\hat{R}_i(f))+\abs{\DIFF g}(\hat{M}_i(g)\cup\hat{R}_i(g)).
\end{align*}
Therefore, it is enough to show that (as a similar statement will hold for $g$),
$$\lim_{i\to\infty}\abs{\DIFF f}(\hat{M}_i(f)\cup\hat{R}_i(f))=0,$$
so that, recalling Proposition \ref{prop:behaviour_TV_PI} and that $\pi^1|_{\mathcal F\GG_f}$ is injective on $C_f\times\RR$, we can just show that
$$\lim_{i\to\infty}{|\DIFF \chi_{\GG_f}|}\left(\bigcup_{j\ge i}{M}_j(f)\right)+{|\DIFF \chi_{\GG_f}|}({R}_i(f))=0,$$
which follows from \eqref{firstbullet}, since \eqref{firstbullet} again and the definition of $\eta_i$ imply that  $$\HH^h_5\left(\bigcap_{i\in\NN}\bigcup_{j\ge i}M^j(f)\right)=0.$$

For the sake of clarity, we subdivide the rest of the proof into five steps. In Step 1 we construct a candidate decomposition as above in such a way that \eqref{firstbullet} is satisfied. The remaining steps are to prove \eqref{secondbullet} for the decomposition obtained in Step 1. Step 2 and Step 4
are used to obtain technical estimates, whereas Step 3 is the most important and proves a $\sigma$-finiteness property via transverse intersection. With these results in mind, we conclude the proof in Step 5.  In the rest of the proof, we are going to use heavily all the conditions ensured by the membership to $C_f$ and $C_g$ without pointing it out every time. 
In other words, we are morally partitioning $\XX$ in good sets, up to an almost negligible set. These sets are good in the sense that $\tilde\Sigma_f$ and $\tilde\Sigma_g$, restricted to the preimage of these sets with respect to the projection onto $\XX$, are bi-Lipschitz equivalent to $(n+1)$-rectifiable subsets of $\RR^{n+2}$, via the same chart maps. Then, as explained in the introduction, the task is to prove transversality of these two subsets of $\RR^{n+2}$, and this is done via a blow-up argument, taking advantage of the fact that we are using the same chart maps.
\medskip

\textbf{Step 1: Construction of the decomposition.}  
{ Let $\epsilon_i\in (0,n^{-1})\cap (0,\omega_{n}/(2\omega_{n+1}))\cap\QQ$ be given by \Cref{prop:from_delta-splitt_to-biLip} applied to $E=\mathcal{G}_f$, with $\eta_i$ in place of $\eta$}. Using the good collection of splitting maps, consider 
\begin{equation*}
	\boldsymbol u_i=\{u_{i,k}\}_k\defeq\boldsymbol u_{{\epsilon}_i /(n+1)},\ \{D_{i,k}\}_k\defeq\{D_{{\epsilon}_i/(n+1),k}\}_k,\ k_i\defeq k_{{\epsilon_i/(n+1)}},\ A_{i}\defeq A_{{\epsilon}_i/(n+1)},
\end{equation*}
where we recall that $k$ and $A$ have been defined in \Cref{defn:GoodCollection}.

We only consider the case of the function $f$, the construction for $g$ being the same, and we concentrate on a fixed $i$. Therefore, we omit to write the dependence on $f$ for what remains of Step 1.

We refer to {the discussion at the beginning of \Cref{sec:Final}} for the the definition (and the basic properties) of the auxiliary set $(\FF_{n+1}\GG_f)_{r,\epsilon}$.
Let $$r_{i}\in (0,\abs{K}^{-1})$$ be small enough so that, setting
\begin{align*}
	R_i^1\defeq \Omega_f\setminus (\FF_{n+1}\GG_f)_{2 r_i,\epsilon_i},
\end{align*}
it holds that 
$$
|\DIFF\chi_{\GG_f}|(R_i^1)<\eta_i.
$$ 
Let also $c=c_i\in (0,1)$ be small enough so that, setting
\begin{align*}
	R_i^2\defeq \Omega_f\setminus \big\{p\in\mathcal F\GG_f\;\big|\;c<\Theta_n(|\DIFF\chi_{\GG_f}|,p)<c^{-1}\big\},
\end{align*}
it holds that $$|\DIFF\chi_{\GG_f}|(R_i^2)<\eta_i.$$

Take now $p=(x,\bar f(x))\in\Omega_f \setminus R_i^1$, so that $x\in D_{i,k}$ for $k=k_i(x)$, see item v) of \Cref{prop:def_set_C_f}, and we have an associated invertible matrix $A=A_i(x)$, compare with item iii) of \Cref{prop:def_set_C_f}, and the discussion in \Cref{defn:GoodCollection}.
Set $v\defeq(u_{i,k},\pi^2)$ and $z\defeq (A u_{i,k},\pi^2)$. Notice that by the fact that $x\in D_{i,k}$ we have that $u_{i,k}$ is $\varepsilon_i$-splitting on a small ball around $x$. Hence, by tensorization, $v$ is $\varepsilon_i$-splitting on a small ball around $p$. Recall, moreover, that by item iii) of \Cref{prop:def_set_C_f} we have that $z$ is a set of good coordinates at $(x,\bar f(x))$, see \Cref{defn:good_coordinates}. Hence, we have that for some ${\nu}\in\mathbb S^{n}$,
$$
\lim_{r\searrow 0}\dashint_{B_r(p)} |{\nu}^j-\nu_{\GG_f}\,\cdot\,\nabla z^j|\dd{|\DIFF\chi_{\GG_f}|}=0\qquad\text{for every }j=1,\dots,n+1,
$$ so that, for some $\mu\in\RR^{n+1}\setminus\{0\}$,
$$
\lim_{r\searrow 0}\dashint_{B_r(p)} |{\mu}^j-\nu_{\GG_f}\,\cdot\,\nabla v^j|\dd{|\DIFF\chi_{\GG_f}|}=0\qquad\text{for every }j=1,\dots,n+1.
$$
It follows that for some $B\in SO(n+1)$, setting $w= B v$, we have that
$$
\lim_{r\searrow 0}\dashint_{B_r(p)} |\nu_{\GG_f}\,\cdot\,\nabla w^j|\dd{|\DIFF\chi_{\GG_f}|}=0\qquad\text{for every }j=1,\dots,n.
$$
Indeed, it suffices to take $B\in SO(n+1)$ such that $B\mu = (0^n,\|\mu\|_{\mathbb R^{n+1}})$.
The equation above and the membership $p\in\mathcal{F} \GG_f$ imply that
$$
\lim_{r\searrow 0}\frac{r}{\mass{\otimes\HH^1}(B_{2 r}(p))}\int_{B_{2 r}(p)} |\nu_{\GG_f}\,\cdot\,\nabla w^j|\dd{|\DIFF\chi_{\GG_f}|}=0\qquad\text{for every }j=1,\dots,n.
$$

Take then $\tilde{r}=\tilde{r}_{i,p}\in(0,r_i)$ small enough so that $w$ is an $\epsilon_i$-splitting map
on $B_{2 \tilde{r}}(p)$ (this is possible thanks to our choice of $\boldsymbol u_i$, the fact that $v$ is $\varepsilon_i$-splitting on a small ball around $p$, and that $B\in SO(n+1)$\footnote{Notice that the operator norm of $B$ is bounded above by a function of $n$, hence the Lipschitz constant of $w$ might increase by at most such a factor, but this is clearly not a problem.}), moreover
$$
\frac{\tilde{r}}{\mass{\otimes\HH^1}(B_{2 \tilde{r}}(p))}\int_{B_{2 \tilde{r}}(p)} |\nu_{\GG_f}\,\cdot\,\nabla w^j|\dd{|\DIFF\chi_{\GG_f}|}<\epsilon_i\qquad\text{for every }j=1,\dots,n,
$$
and finally, using also that $|\DIFF\chi_{\GG_f}|$ is asymptotically doubling at \(p\), 
$$
|\DIFF\chi_{\GG_f}|(B_{\tilde{r}}(p)\setminus (D_{i,k}\times\RR))<\eta_i |\DIFF\chi_{\GG_f}|(B_{\tilde{r}/5}(p)),
$$
where we recall that for deducing the last information we are using that item v) of \Cref{prop:def_set_C_f}.
{We can also assume that $B_{\tilde r}(x)\subseteq\bar B$, and this will be useful below.}
Note that $p\in(\FF_{n+1}\GG_f)_{2 r_i,\epsilon_i}\subseteq (\FF_{n+1}\GG_f)_{2 \tilde{r},\epsilon_i}$.
{ We can thus apply \Cref{prop:from_delta-splitt_to-biLip} and obtain a set $G=G_{i,p}\subseteq B_{\tilde{r}}(p)$} such that 
$$ 
\HH_5^{h}(B_{\tilde{r}}(p)\setminus G)\le C_N\eta_i\frac{\mass\otimes\mathcal{H}^1(B_{\tilde{r}}(p))}{\tilde{r}},
$$
and $(w^1,\dots,w^n): G\cap(\FF_{n+1}\GG_f)_{2 \tilde{r},\epsilon_i}\rightarrow\RR^n$ is bi-Lipschitz onto its image. Here $C_N$ depends only on $N$. Clearly, also  $v:G\cap(\FF_{n+1}\GG_f)_{2 {\tilde r},\epsilon_i}\rightarrow\RR^{n+1}$ is bi-Lipschitz onto its image, so that the image of $v$ is $n$-rectifiable, due to the fact that $\mathcal{F}_{n+1}\mathcal{G}_f$ is $n$-rectifiable.

To sum up, for $i$ fixed, for every $ p=(x,t)\in\Omega_f\setminus R_i^1$ we have shown that 
$$
v_{i,p}\defeq (u_{i,k_i(x)},\pi^2):  G_{i,p}\cap (\FF_{n+1}\GG_f)_{2 {r}_{i},\epsilon_i}\rightarrow\RR^{n+1},
$$ 
is  bi-Lipschitz onto its image for some set $G_{i,p}\subseteq B_{\tilde{r}_{i,p}}(p)$, that
\begin{equation}\label{tmp111}
	 \HH_5^{h}(B_{\tilde{r}_{i,p}}(p)\setminus G_{i,p})\le C_N\eta_i\frac{\mass\otimes\mathcal{H}^1(B_{\tilde{r}_{i,p}}(p))}{{\tilde{r}_{i,p}}},
\end{equation}
and finally that
\begin{equation}\label{tmp112}
	|\DIFF\chi_{\GG_f}|(B_{\tilde{r}_{i,p}}(p)\setminus (D_{i,k_i(x)}\times\RR))<\eta_i |\DIFF\chi_{\GG_f}|(B_{\tilde{r}_{i,p}/5}(p)).
\end{equation}

We apply Vitali’s covering Lemma to find a sequence of balls $\{B^j_{i}\}_j$ where for every $j$, $B^j_{i}=B_{r_i^j}(p_i^j)=B_{\tilde{r}_{i,p}}(p)$ for some $p=p_i^j\in  \Omega_f\setminus R_i^1$, such that 
$$ \bigcup_{j\in\NN}B_{i}^j\supseteq \Omega_f\setminus R_{1}^i$$
and $\{5^{-1} B^j_{i}\}_j$ are pairwise disjoint; here \(5^{-1}B^j_i\) stands for the ball \(B_{r_i^j/5}(p_i^j)\).
Clearly, to each $B_i^j$ correspond in a natural way sets $G^j_i$, $D^j_i$, and maps $v^j_i:G_i^j\cap (\FF_{n+1}\GG_f)_{2 r_i,\epsilon_i}\rightarrow\RR^{n+1}$. 
We set then
\begin{align*}
M_i&\defeq\Omega_f\cap\bigcup_{j\in\NN}(B_{i}^j\setminus G^j_{i}),\\
R_i^3&\defeq\Omega_f\cap  \bigcup_{j\in\NN}(B_i^j\setminus (D_i^j\times\RR)).
\end{align*} 
Using \eqref{tmp111} for the first chain of inequalities, and \eqref{tmp112} for the second chain of inequalities, we have
\begin{align*}
\HH^h_5(M_i)&\le \sum_{j\in\NN}\HH^h_5(B_i^j\setminus G_i^j)\le C_N\eta_i\sum_{j\in\NN}\frac{ \mass\otimes\mathcal{H}^1(B_i^j)}{r_i^j} \le C_{K,N}\eta_i \sum_{j\in\NN} \frac{ \mass\otimes\mathcal{H}^1(5^{-1}B_i^j)}{r_i^j/5}\\&\le C_{K,N}\eta_i \sum_{j\in\NN} |\DIFF\chi_{\GG_f}|(5^{-1}B_i^j) \le C_{K,N}\eta_i{|\DIFF\chi_{\GG_f}|(\bar B\times\RR)}
\end{align*}
We stress that in the fourth inequality above we are using that $p_i^j\in (\mathcal{F}_{n+1}\mathcal{G}_f)_{2r_i,\varepsilon_i}$, and 
\begin{align*}
|\DIFF\chi_{\GG_f}|(R_i^3)\le \sum_{j\in\NN} |\DIFF\chi_{\GG_f}|(B_i^j\setminus (D_i^j\times\RR))\le  \eta_i\sum_{j\in\NN}|\DIFF\chi_{\GG_f}|(5^{-1}B_i^j)\le \eta_i{|\DIFF\chi_{\GG_f}(\bar B\times\RR)|}.
\end{align*}
Now set 
$$
S_i^j\coloneqq v^j_i\big((\Omega_f \cap G_i^j \cap (\FF_{n+1}\GG_f)_{2 r_i,\epsilon_i})\setminus( R_i^1 \cup R_i^2\cup R_i^3)\big)\subseteq\RR^{n+1},
$$
and recall that $S_i^j$ is $n$-rectifiable. For every $j\in\NN$, there exists a countable family $\{S_i^{j,\ell}\}_{\ell\in\NN}$ of $C^1$-hypersurfaces in $\RR^{n+1}$,  such that  $$\HH^n\bigg(S_i^j\setminus\bigcup_{\ell\in\NN} S_i^{j,\ell}\bigg)=0.$$
Define
$$
 \hat{S}_i^{j,\ell}\defeq\bigg\{y\in S_i^{j,\ell}\cap S_i^j\;\bigg|\;\lim_{r\searrow 0}\frac{\HH^n(B_r(y)\cap S_i^{j,\ell}\cap S_i^j)}{\omega_n r^n}=1 \bigg\},
$$
and 
$$
R^4_i\defeq \bigcup_{j\in\NN}\bigcap_{\ell\in\NN}\big(S_i^j\setminus (v_i^j)^{-1}(\hat{S}_i^{j,\ell}) \big)\subseteq\Omega_f,
$$ 
and notice that  $\HH^n(R^4_i)=0$, so that $|\DIFF\chi_{\GG_f}|(R_i^4)=0$.
We set also 
$$
Q_i^{j,\ell}\defeq (v_i^j)^{-1}(\hat{S}_i^{j,\ell})\subseteq\Omega_f,
$$
and notice that
\begin{equation}\label{compcharts}
	\text{if }v_i^j=(u_{i,k},\pi^2),\qquad\text{then }Q_i^{j,\ell}\subseteq D_{i,k}\times\RR\text{ for every }\ell\in\NN.
\end{equation}
Now define
\begin{align*}
	R_i^5&\defeq \bigcup_{j,\ell\in\NN}\bigg(Q_i^{j,\ell}\setminus\bigg\{p\in Q_i^{j,\ell}\;\bigg|\;\lim_{r\searrow 0}\frac{|\DIFF\chi_{\GG_f}|(B_r(p)\cap Q_i^{j,\ell})}{|\DIFF\chi_{\GG_f}|(B_r(p))}=1\bigg\}\bigg).
\end{align*}
We then set
$$
R_i\defeq R^{1}_i\cup R^{2}_i\cup R_i^3\cup R_i^4\cup R_i^5,
$$ 
and finally 
$$
G_i\defeq \Omega_f\setminus(M_i \cup R_i)\subseteq\bigcup_{j,\ell\in\NN}Q_i^{j,\ell}.
$$
It is immediate to check that the sets we constructed verify \eqref{firstbullet}.
The rest of the proof shows that they also verify \eqref{secondbullet}.

\medskip
\textbf{Step 2: Almost one-sided Kuratowski convergence.} For any $i$, let 
$$
p\in \Omega_f\setminus R_i^1(f),
$$ 
and let $\rho_k\searrow 0$ be such that 
$$
(\XX\times\RR,\rho_k^{-1}\dist_{\XX\times\RR},(\mass\otimes\mathcal{H}^1)_p^{\rho_k},p,\GG_f)\rightarrow (\RR^{n+1},\dist_e,\underline{\mathcal L}^{n+1},0,H),
$$
where $H\subseteq\RR^{n+1}$ is a halfspace. Fix also $\rho>0$. Assume the convergence is realized in a proper metric space $(\ZZ,\dist_\ZZ)$.
We show that for every $\epsilon>0$ there exists $k_0\in\NN$ such that 
$$
 B_\rho^\ZZ(p^k)\cap (\Omega_f\setminus R_i^1(f))^k\subseteq B^\ZZ_{\epsilon}(\partial H)\qquad\text{if $k\ge k_0$.}
$$
Here the superscript $k$ denotes the isometric image in $\ZZ$ through the embedding of the $\rho_k$-rescaled space. 

We argue by contradiction. Up to taking a not relabelled subsequence, by the contradiction assumption there exist $\{q^k\}_k$ such that for every $k$, 
$$
q^k\in  \big(B_\rho^\ZZ(p^k)\cap (\Omega_f\setminus R_i^1(f))^k\big)\setminus B_{\epsilon}^\ZZ(\partial H).
$$ 
Up to a not relabelled subsequence, $q^k\rightarrow q\in \ZZ$,  with $\dist_\ZZ(q,\partial H)\ge \epsilon/2$. It is easy to see that $q\in\RR^{n+1}$.
By weak convergence of measures,
$$
\lim_{k\to\infty}\frac{\rho_k|\DIFF\chi_{\GG_f}|(B_{\epsilon\rho_k/2}(q^k))}{C^{\rho_k}_p}=0.
$$
On the other hand, recalling that $\{q^k\}_k\subseteq (\FF_{n+1}\GG_f)_{2 r_i,\epsilon_i}$ and using again the weak convergence of measures,
\begin{align*}
	\liminf_{k\to\infty} \frac{\rho_k|\DIFF\chi_{\GG_f}|(B_{\epsilon\rho_k/2}(q^k))}{C^{\rho_k}_p}&=\liminf_{k\to\infty} \frac{\rho_k|\DIFF\chi_{\GG_f}|(B_{\epsilon\rho_k/2}(q^k))}{\mass(B_{\epsilon\rho_k/2}(q^k))}\frac{\mass(B_{\epsilon\rho_k/2}(q^k))}{C_p^{\rho_k}}\\
&\ge \frac{\omega_n}{2 \omega_{n+1}}\underline{\mathcal L}^{n+1}(B_{\epsilon/2}(q))>0,
\end{align*}
which is a contradiction.
\medskip

\textbf{Step 3: Proof of the \(\sigma\)-finiteness claim.} We use the same notation as in Step 1.
We claim that for every $i$, $$\HH^n\mres \big\{(x,t,s)\in \tilde R\;\big|\;x\in\hat G_i(f)\cap\hat{G}_i(g)\big\}$$ is $\sigma$-finite.
To show this, it is enough to prove that for every $i,j,k,\ell,m,\xi\in\NN$,
$$
\HH^n\mres \tilde{T}_{i,j,k,\ell,m,\xi}
$$
is $\sigma$-finite, where we set
\begin{align*}
\tilde{T}_{i,j,k,\ell,m,\xi}\defeq \Big\{(x,t,s)\in \tilde{R}\;\big|\;x\in  \hat{G}_i(f)\cap \hat{G}_{i}(g)\cap D_{i,k},\,(x,t)\in Q_i^{j,m}(f),\,(x,s)\in Q_i^{\ell,\xi}(g) \Big\}.
\end{align*}
Fix then $i,j,k,\ell,m,\xi\in\NN$ and set for simplicity $\tilde{T}=\tilde{T}_{i,j,k,\ell,m,\xi}$. Now define
$$
v\defeq(u_{i,k},\pi^2,\pi^3):(Q_i^{j,m}(f)\times\RR)\cup \tau(Q_i^{\ell,\xi}(g)\times\RR)\rightarrow\RR^{n+2}.
$$ By the construction in Step 1, 
\begin{equation}\label{mapsv}
	v|_{Q_i^{j,m}(f)\times\RR}\qquad\text{and}\qquad v|_{\tau(Q_i^{\ell,\xi}(g)\times\RR)},
\end{equation}
are bi-Lipschitz onto their image. Therefore, as $\tilde{T}\subseteq(Q_i^{j,m}(f)\times\RR)\cap \tau(Q_i^{\ell,\xi}(g)\times\RR)$, it is enough to show that  $$\HH^n\mres v(\tilde{T})$$ is $\sigma$-finite. Here a central point is that $\tilde{T}\subseteq D_{i,k}\times\RR\times\RR$ so that, by the construction in Step 1, the map $v$ as above will be suitable both for the part concerning $f$ and the part concerning $g$ (see \eqref{compcharts}). 
Now notice that $$v(\tilde{T})\subseteq( \hat{S}_i^{j,m}(f)\times\RR)\cap \tau(\hat{S}_i^{\ell,\xi}(g)\times\RR),$$
so that, by a standard result of in Geometric Measure Theory on Euclidean spaces, we can simply show that at every $p=(x,t,s)\in\tilde{T}$ it holds that $\hat{S}_i^{j,m}(f)\times\RR$ and $\tau(\hat{S}_i^{\ell,\xi}(g)\times\RR)$ intersect transversally at $v(p)$, or, equivalently, that $\hat{S}_i^{j,m}(f)\times\RR$ and $\tau(\hat{S}_i^{\ell,\xi}(g)\times\RR)$ have  different tangent spaces at $v(p)$. We can, and will, assume that $v(p)=0$. 

By our assumptions, compare with item iii) and item iv) of \Cref{prop:def_set_C_f}, we know that there exists a sequence $\rho_k\searrow 0$ and a proper metric space $(\ZZ,\dist_\ZZ)$ such that $(\ZZ\times\RR\times\RR,\dist_{\ZZ\times\RR\times\RR})$ realizes both the convergence
\begin{equation}\label{conv1}
\begin{split}
		(\XX\times\RR\times\RR,\rho_k^{-1}\dist_{\XX\times\RR\times\RR},&(\mass\otimes\HH^1\otimes\HH^1)_{p}^{\rho_k},p,\GG_f\times\RR)\\&\rightarrow (\RR^n\times\RR\times\RR,\dist_e,\underline{\mathcal L}^{n+2},0,H\times\RR\times\RR)
\end{split}
\end{equation}
and the convergence
\begin{equation}\label{conv2}
\begin{split}
		(\XX\times\RR\times\RR,\rho_k^{-1}\dist_{\XX\times\RR\times\RR},&(\mass\otimes\HH^1\otimes\HH^1)_{p}^{\rho_k},p,\tau(\GG_g\times\RR))\\&\rightarrow (\RR^n\times\RR\times\RR,\dist_e,\underline{\mathcal L}^{n+2},0,H'\times\RR\times\RR),
\end{split}
\end{equation}
 where $H$ and $H'$ are halfspaces in \(\RR^n\). Notice that this can be done since the $(n+1)$-coordinate of the $\nu$'s are zero, see the definition of $\tilde R$.  We have endowed $\RR^{n}\times\RR\times\RR$ with the coordinates given by the (locally uniform) limits of appropriate
 rescalings of the components of $z$, where $$z\defeq (A_{i}(x)u_{i,k},\pi^2,\pi^3)\colon B_\rho(p)\rightarrow\RR^{n+2},$$
 for some $\rho>0$ (see Remark \ref{convdelta}). To do so, we needed to take a not relabelled subsequence of $\{\rho_k\}_k$, but this will make no difference.
Hence, recalling also the definition of $\tilde{R}$, it follows that $H\ne H'$. 

	Fix $D\ge 5$ greater than the bi-Lipschitz constants of the maps in \eqref{mapsv} and such that 
	\begin{equation}\label{Dbound}
		|(A_i(x),\pi^1,\pi^2) c|\le (D-4)|c|\qquad\text{for every $c\in\RR^{n+2}$}.
	\end{equation}
 Let $\delta\in(0,D^{-1})$ be sufficiently small so that we find $a\in (\partial H\times\RR\times\RR)\cap B_{1}(0)\subseteq\RR^{n+2}$ such that $B_{ D\delta}(a)\cap (\partial H'\times\RR\times\RR)=\emptyset$.

  	As a consequence of the density assumption made by removing $R_i^5$, we can find a sequence $\{a^k\}_k\subseteq\XX\times\RR\times\RR$ with
  	$$
  	a^k\in( Q_i^{j,m}(f)\times\RR)\cap B_{\rho_k}(p)\qquad\text{for every }k\in\NN,
  	$$ 
  	and $a^k\rightarrow a$ in $\ZZ\times\RR\times\RR$, where here and below the superscript $k$ denotes the isometric image in $\ZZ\times\RR\times\RR$ through the embedding of the $\rho_k$-rescaled space.

By weak convergence of measures,
\begin{align*}
\liminf_{k\to\infty}\frac{\rho_k |\DIFF\chi_{\GG_f\times\RR}|(B_{D^{-1}\delta\rho_k}(a^k))}{C_p^{\rho_k}}&>0,\\
\limsup_{k\to\infty}\frac{\rho_k |\DIFF\chi_{\tau(\GG_g\times\RR)}|(B_{ D\delta \rho_k}(a^k))}{C_p^{\rho_k}}&=0.
\end{align*}
Recalling again the density assumption made by removing $R_i^5$ together with the bounds on $\Theta_n(|\DIFF\chi_{\GG_f}|,\,\cdot\,)$ by removing $R_i^2$,
and finally the weak convergence of measures, this reads as
\begin{align}
	\liminf_{k\to\infty}\rho_k^{-n-1} \HH^{n+1}\big(B_{D^{-1}\delta\rho_k}(a^k)\cap (Q_i^{j,m}\times \RR)\big)&>0,\label{aleq1}\\
	\limsup_{k\to\infty}\rho_k^{-n-1}\HH^{n+1}\big(B_{ D\delta \rho_k}(a^k)\cap\tau (Q_i^{\ell,\xi}\times \RR)\big)&=0\label{aleq2}.
\end{align}
It is easy to verify by contradiction that \eqref{aleq1} implies, by our choice of $D$, that
\begin{equation}\label{cont1}
	\liminf_{k\to\infty}\rho_k^{-n-1}\HH^{n+1}\big(B_{\delta\rho_k}(v(a^k))\cap (\hat{S}^{j,m}_i(f)\times\RR)\cap B_{2 D\rho_k}(0)\big)>0.
\end{equation}
Now we show that  
\begin{equation}\label{cont2}
	\liminf_{k\to\infty}\rho_k^{-n-1}\HH^{n+1}\big(B_{\delta\rho_k}(v(a^k))\cap\tau (\hat{S}^{\ell,\xi}_i(g)\times\RR)\cap B_{2 D\rho_k}(0)\big)=0.
\end{equation}
By Step 2, we get that for $\epsilon\in (0,\delta)$,  there exists $k_0$ such that if $k\ge k_0$, then for every $b\in (B_{2  D^{2} \rho_k}(p)\setminus B_{D \delta\rho_k}(a^k)\cap\tau(Q_i^{\ell,\xi}\times\RR))^k$ there exists $b'\in\partial H'\times\RR\times\RR$ such that $$\dist_{\ZZ\times\RR\times\RR}(b,b')<\epsilon.$$
Up to increasing $k_0$, we may assume that for every $k\ge k_0$,$$\dist_{\ZZ\times\RR\times\RR}(a,a^k)<\epsilon.$$
Notice that if $b$ is as above, then
$$|b'-a|\ge D\delta-2\epsilon$$
and, by local uniform convergence, up to enlarging $k_0$ and provided $\epsilon>0$ is small enough,
$$
|\rho_k^{-1}z(b)-\rho_k^{-1}z(a^k)|\ge | b'-a|- 2 \delta
$$
so that 
$$|z(b)-z(a^k)|\ge ((D-2)\delta-2\epsilon)\rho_k\ge(D-4)\delta\rho_k,$$
which implies, recalling \eqref{Dbound},
$$ |v(b)-v(a^k)|\ge \delta\rho_k.$$
Notice that the above inequality does \emph{not} follow from the fact that the maps in \eqref{mapsv} are $D$-bi-Lipschitz, but implies that \eqref{cont2} follows from \eqref{aleq2}, by the choice of $D$.

We can now conclude the proof of Step 3, as by \eqref{cont1} and \eqref{cont2} it follows easily that $\hat{S}_i^{j,m}(f)\times\RR$ and $\tau(\hat{S}_i^{\ell,\xi}(g)\times\RR)$ have different tangent spaces at $0$.
\medskip

\textbf{Step 4: A technical estimate.} For some $i\in\NN$, let us assume $\tilde{R}'$ is such that 
$$
\tilde{R}'\subseteq\tilde{R}\cap (\hat{G}_i(f)\times\RR\times\RR)\cap (\hat{G}_i(g)\times\RR\times\RR),
$$
and that $\tilde{R}'$ has finite $\HH^n$-measure. Let $p\in\tilde{R}'$ be fixed. We claim that
$$\lim_{r\searrow 0}\frac{\HH_5^n\big(\pi^{1,2}(\tilde{R}'\cap B_r(p))\big)}{r^n}=0.$$

Let us prove the claim. Take a sequence $\rho_k\searrow 0$.
We recall, that, with the same notation as above, up to a not relabelled subsequence, \eqref{conv1} and \eqref{conv2} hold.
Let 
$$
I:=I((\partial H\cap\partial H')\times\RR\times\RR)
$$ 
be a neighbourhood (in $\ZZ\times\RR\times\RR$) of $((\partial H\cap\partial H')\times\RR\times\RR)\cap B_2(0)$ that satisfies $$\HH^n_{5}(\pi^{1,2}(I))<\epsilon.$$ As a consequence of Step 2, there exists $k_0\in\NN$
such that $$B_1^{\ZZ\times\RR\times\RR}(p^k)\cap \tilde{R}' \subseteq I \qquad\text{for every $k\ge k_0$},$$
from which, taking the projection $\pi^{1,2}$, the claim follows.

\medskip 
\textbf{Step 5: Conclusion.} Let us finally prove \eqref{secondbullet}. By Step 3, it is enough to show that 
 $$(\abs{\DIFF f}\wedge\abs{\DIFF g})(\pi^1(\tilde{R}'))=0,$$
where $\tilde{R}'$ is as in Step 4. Fix $\epsilon>0$. For every $j\in\NN,\ j\ge 1$ we consider the sets $$\tilde{R}'_j\defeq\bigg\{p\in\tilde{R}'\;\bigg|\; \frac{\HH_5^n\big(\pi^{1,2}(\tilde{R}'\cap B_r(p))\big)}{r^n}<\epsilon\,\text{ for every $r\in (0,j^{-1})$}\bigg\}$$
 and $$ \tilde{R}''_j\coloneqq\tilde{R}'_j\setminus\bigcup_{i<j}\tilde{R}_i'.$$ 
Notice that, by Step 4, 
$$
\tilde{R}'=\bigcup_{j\geq 1}\tilde{R}''_j,
$$
and, by construction, this union is disjoint.
For every $j\geq 1$, we take a countable family of balls $\{B_{r_i^j}(p_i^j)\}_i$ such that, for every $i\in\NN$ it holds that $r_i^j<j^{-1}$ and $p_i^j\in\tilde{R}''_j$, as well as
\begin{equation}\label{hatr}
	\tilde{R}''_j\subseteq\bigcup_{i\in\NN}B_{r_i^j}(p_i^j) \qquad\text{and}\qquad
\sum_{i\in\NN}(r_i^j)^n\le2^n\HH^{n}(\tilde{R}''_j)+2^{-j}.
\end{equation}
We can compute, recalling the definition of $\tilde{R}''_j$ and \eqref{hatr},
\begin{align*}
	&\HH_5^n(\pi^{1,2}(\tilde{R}''_j))
	 \le\HH_5^n\Big(\pi^{1,2}\Big(\tilde{R}''\cap\bigcup_{i\in\NN}B_{r_i^j}(p_i^j)\Big)\Big)\le \sum_{i\in\NN} \epsilon (r_i^j)^n\le \epsilon (2^n\HH^n(\tilde{R}''_j)+2^{-j}).
\end{align*}
Therefore, it holds that
$$ 
\HH_5^n(\pi^{1,2}(\tilde{R}'))\le \epsilon (2^n\HH^n({\tilde{R'}})+1),
$$
and, being $\epsilon>0$ arbitrary, $|{\DIFF \chi_{\GG_f}}|(\pi^{1,2}(\tilde{R'}))=0$,
whence the conclusion follows thanks to Proposition \ref{prop:behaviour_TV_PI}.
\end{proof}

\begin{lem}\label{rank1lem0}
	Let $(\XX,\dist,\mass)$ be an $\RCD(K,N)$ space of essential dimension $n$ and let $f,g\in\BV(\XX)$. Choose two $\capa$-vector fields representatives for $\nu_f$ and $\nu_g$. Then $$\nu_f=\pm\nu_g\qquad(\abs{\DIFF f}\wedge\abs{\DIFF g})\text{-a.e.\ on $C_f\cap C_g$}.$$
\end{lem}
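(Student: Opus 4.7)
The plan is to combine Lemma \ref{mainlemma}, Theorem \ref{thmcantor}, and Lemma \ref{coincide}, and then to upgrade a coordinate-wise equality to one at the level of the capacitary tangent module via the basis property of good splitting maps.

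Fix a good collection of splitting maps $\{\boldsymbol u_\eta\}_\eta$ and set $\mu\coloneqq|\DIFF f|\wedge|\DIFF g|$. First, by applying Theorem \ref{thmcantor} to $f$ and to $g$, together with Proposition \ref{prop:behaviour_TV_PI} (which pushes the total variation of $\chi_{\GG_f}$ down to $|\DIFF f|\llcorner C_f$) and Theorem \ref{thm:repr_per} (mutual absolute continuity of $|\DIFF\chi_{\GG_f}|$ and $\mathcal H^n$ on the reduced boundary), and using the countability of $\QQ\cap(0,n^{-1})$, one gets
$$
(\nu_{\GG_f}^{\boldsymbol u_\eta})_{n+1}(x,\bar f(x))=(\nu_{\GG_g}^{\boldsymbol u_\eta})_{n+1}(x,\bar g(x))=0,\qquad\text{for every }\eta\in\QQ\cap(0,n^{-1}),
$$
for $\mu$-a.e.\ $x\in C_f\cap C_g$. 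Second, by Lemma \ref{mainlemma}, $\mu(R)=0$; for $\mu$-a.e.\ $x\in C_f\cap C_g$, the triple $(x,\bar f(x),\bar g(x))$ belongs to $\tilde\Sigma_f\cap\tilde\Sigma_g$ by item (ii) of Proposition \ref{prop:def_set_C_f} but lies outside $\tilde R$. Combined with the vanishing of the $(n+1)$-th coordinates established above, this forces the existence of some $\eta=\eta(x)\in\QQ\cap(0,n^{-1})$ for which
$$
\nu_{\GG_f}^{\boldsymbol u_{\eta(x)}}(x,\bar f(x))=\pm\,\nu_{\GG_g}^{\boldsymbol u_{\eta(x)}}(x,\bar g(x)).
$$
Reading off the first $n$ coordinates and invoking Lemma \ref{coincide} for both $f$ and $g$ yields
$$
\nu_f^{\boldsymbol u_{\eta(x)}}(x)=\pm\,\nu_g^{\boldsymbol u_{\eta(x)}}(x),\qquad\text{for }\mu\text{-a.e.\ }x\in C_f\cap C_g.
$$

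To promote this coordinate identity to the desired module identity $\nu_f=\pm\nu_g$, I would use that, by the construction in Definition \ref{defn:GoodCollection}, on each $D_{\eta,k}$ the Gram matrix $M_\eta$ is within $n^{-1}$ of the identity, hence invertible, so that the $n$ capacitary gradients $\{\nabla u^j_{\eta,k}\}_{j=1}^n$ are linearly independent in the localization of $L^0_\capa(T\XX)$ to $D_{\eta,k}$. Since $C_f\cap C_g\subseteq\mathcal R_n^*$ and the essential dimension of $(\XX,\dist,\mass)$ equals $n$, these $n$ gradients span the tangent fibre at $\mu$-a.e.\ point of $D_{\eta,k}\cap C_f\cap C_g$. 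The vanishing of every inner product $(\nu_f\mp\nu_g)\cdot\nabla u^j_{\eta(x),k_{\eta(x)}(x)}=0$, for $j=1,\ldots,n$, then upgrades to $\nu_f\mp\nu_g=0$ at $\mu$-a.e.\ $x$. The choice of sign is Borel-measurable: it suffices to partition $C_f\cap C_g$ into the Borel sets $\{x:\nu_f=\nu_g\}$ and $\{x:\nu_f=-\nu_g\}$.

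The main obstacle lies in the last, module-theoretic step: one needs a rigorous argument that, on the support of the singular measure $\mu$ (which need not be absolutely continuous with respect to $\mass$), the capacitary gradients of a good splitting map genuinely form a local basis of the tangent fibre. This should follow from the invertibility of $M_\eta$ together with $\mu\ll\capa$ and the essential-dimension constraint $C_f\cap C_g\subseteq\mathcal R_n^*$ (ensured by Theorem \ref{thm:const_dim_cod1}), but the precise verification must be carried out within the capacitary tangent module formalism of \cite{debin2019quasicontinuous,bru2019rectifiability}.
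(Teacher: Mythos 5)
Your proposal follows the paper's proof essentially step for step: combine \Cref{thmcantor}, \Cref{mainlemma}, and \Cref{coincide} to get, for $\mu\coloneqq|\DIFF f|\wedge|\DIFF g|$-a.e.\ $x\in C_f\cap C_g$, some $\eta(x)\in(0,n^{-1})\cap\QQ$ with $\nu_f^{\boldsymbol u_{\eta(x)}}(x)=\pm\nu_g^{\boldsymbol u_{\eta(x)}}(x)$, and then upgrade this coordinate identity to $\nu_f=\pm\nu_g$ in $L^0_\capa(T\XX)$. The one step you correctly flag as incomplete --- showing that the $n$ capacitary gradients genuinely span the relevant fibre $\mu$-a.e., given that $\mu\perp\mass$ --- is resolved in the paper not by invoking the essential dimension directly (which a priori only controls the $\mass$-a.e.\ picture), but by an intrinsic dimension bound on the capacitary module: for any $n+1$ test functions $h_1,\dots,h_{n+1}$ the Gram determinant $\det(\nabla h_i\cdot\nabla h_j)_{i,j}$ vanishes $\mass$-a.e.\ and hence, by quasi-continuity, $\capa$-a.e., so that $L^0_\capa(T\XX)$ has local dimension at most $n$. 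Combining this with the invertibility of $M_\eta$ on $D_{\eta,k}$ (which, as you note, gives linear independence of the $n$ gradients $\capa$-a.e.\ on $D_{\eta,k}$) shows that they form a generating family with respect to $\capa$, and therefore with respect to every $\mu\ll\capa$; no appeal to $\mathcal R_n^*$ is needed for this last step.
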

\begin{proof}
From \Cref{coincide} and \Cref{mainlemma} together with \Cref{thmcantor} we have that for $(|\DIFF f|\wedge|\DIFF g|)$-a.e.\ $x\in C_f\cap C_g$ there exists $\eta=\eta(x)\in (0,n^{-1})\cap\QQ$ such that $$\nu^{\boldsymbol u_\eta}_f(x)=\pm\nu^{\boldsymbol u_\eta}_g(x).$$

It remains to show that if for some $\eta\in (0,n^{-1})\cap\QQ$ it holds that $\nu^{\boldsymbol u_\eta}_f=\pm\nu^{\boldsymbol u_\eta}_g$ $\capa$-a.e.\ on a Borel set $A$, then $\nu_f=\pm\nu_g$ $\capa$-a.e.\ on $A$.
This follows since the gradients of the functions in ${\boldsymbol u}_{\eta,k}$ are a generating subspace of $L^0_\capa (T\XX)$ on $D_{\eta,k}$, for the $L^0_\capa (T\XX)$ module has local dimension at most $n$. Indeed, if $h_{1},\cdots,h_{n+1}\in\TestF(\XX)$ then ${\rm{det}}(\nabla h_i\,\cdot\,\nabla h_j)_{i,j}=0\ \mass$-a.e.\ hence $\capa$-a.e., so that is now easy to bound the local dimension of $L^0_\capa (T\XX)$.
\end{proof}

The following lemma is extracted from \cite[Proposition 3.30]{BGBV}.
\begin{lem}\label{rank1lem1}
	Let $(\XX,\dist,\mass)$ be an $\RCD(K,N)$ space of essential dimension $n$ and let $f,g\in\BV(\XX)$. Choose two $\capa$-vector fields representatives for $\nu_f$ and $\nu_g$. Then $$\nu_f=\pm\nu_g\qquad(\abs{\DIFF f}\wedge\abs{\DIFF g})\text{-a.e.\ on $J_f\cap J_g$}.$$
\end{lem}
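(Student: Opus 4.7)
The plan is to reduce the statement to a well-known fact about normals of sets of finite perimeter, using the decomposition of the jump set. In any PI space, for $f\in\BV(\XX)$, the jump set $J_f$ is $|\DIFF f|$-essentially covered by a countable union $\bigcup_{i\in\NN}\FF E^f_{t_i}$ of reduced boundaries of superlevel sets $E^f_t\defeq\{f>t\}$, as is recalled just above \eqref{eq:bar_f_finite} (cf.\ \cite[Proposition 5.2]{ambmirpal04}); an analogous decomposition holds for $J_g$ in terms of $F^g_s\defeq\{g>s\}$. It is therefore enough to prove two facts: first, that for $|\DIFF f|$-a.e.\ $x\in J_f\cap\FF E^f_{t_i}$ one has $\nu_f(x)=\pm\nu_{E^f_{t_i}}(x)$ in $L^0_\capa(T\XX)$ (and the analogous statement for $g$); second, that for any two sets of finite perimeter $E,F\subseteq\XX$, the identity $\nu_E=\pm\nu_F$ holds $(|\DIFF\chi_E|\wedge|\DIFF\chi_F|)$-a.e.\ on $\FF E\cap \FF F$.

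For the first fact I would argue by localization and blow-up. Select $x\in J_f\cap\FF E^f_{t_i}$ such that $f^\wedge(x)<t_i<f^\vee(x)$, $x$ is a common Lebesgue point of $\nu_f$ and $\nu_{E^f_{t_i}}$, and the blow-up of $E^f_{t_i}$ at $x$ is a half-space with normal $\nu_{E^f_{t_i}}(x)$; this holds $|\DIFF f|$-a.e.\ on $J_f\cap\FF E^f_{t_i}$. At such $x$, by \Cref{lem:char_G_f} and standard blow-up arguments for BV functions, $f$ itself rescales to the step function $f^\wedge(x)\chi_{H^c}+f^\vee(x)\chi_H$ for the same half-space $H$ that is the blow-up of $E^f_{t_i}$. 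Testing the Gauss--Green identity for $f$ against test vector fields concentrated near $x$ and passing to the pmGH limit then forces $\nu_f(x)$ and $\nu_{E^f_{t_i}}(x)$ to span the same line in the tangent module, yielding the claim.

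For the second fact I would use a blow-up/transversality argument exploiting \Cref{thm:repr_per}. At $(|\DIFF\chi_E|\wedge|\DIFF\chi_F|)$-a.e.\ $x\in\FF E\cap\FF F$, the blow-ups of $E$ and $F$ are half-spaces with boundary hyperplanes normal to $\nu_E(x)$, respectively $\nu_F(x)$, and \Cref{thm:repr_per} identifies the blow-ups of $|\DIFF\chi_E|$ and $|\DIFF\chi_F|$ with $\Theta_n(\mass,x)\mathcal H^{n-1}$ restricted to these hyperplanes. If $\nu_E(x)\ne\pm\nu_F(x)$, the two hyperplanes in the tangent cone are transverse, so their intersection is $\mathcal H^{n-1}$-negligible; by lower semicontinuity this forces the $(n-1)$-dimensional density of $|\DIFF\chi_E|\wedge|\DIFF\chi_F|$ at $x$ to vanish, which is incompatible with $x$ being a density point of the minimum measure. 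The main obstacle is making this transversality step fully rigorous in the nonsmooth setting; it is conceptually analogous to \Cref{mainlemma} but substantially simpler, since the blow-ups at points of $\FF E$ and $\FF F$ are already half-spaces and no auxiliary coordinate is required.
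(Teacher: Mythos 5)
The paper gives no proof of this lemma; it is simply extracted from \cite[Proposition 3.30]{BGBV}. Your proposal sketches an argument from first principles, reducing via the covering of $J_f$ by reduced boundaries of superlevel sets to locality of the unit normal on $\FF E\cap\FF F$ for two sets of finite perimeter, which is one of the main results of \cite{bru2021constancy} --- so you are, in effect, reproving that result.

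The reduction is the right one, but both steps gloss over exactly where the work lies. For the first step, ``testing Gauss--Green against localized vector fields and passing to the pmGH limit'' is not a rigorous move: $\nu_f(x)$ and $\nu_{E^f_t}(x)$ are elements of the abstract capacitary tangent module $L^0_\capa(T\XX)$, not Euclidean vectors, and identifying them with normals to the blow-up half-spaces requires precisely the good-coordinate and Lebesgue-point machinery of Definition~\ref{defn:good_coordinates} and Proposition~\ref{prop:conv_good_coord} that the paper sets up with care. The cleaner route is the coarea-type identity $\nu_f=\nu_{E^f_t}$ $|\DIFF\chi_{E^f_t}|$-a.e.\ for a.e.~$t$ (\cite[Lemma 3.27]{BGBV}, already invoked in the proof of Lemma~\ref{coincide}); but even then one must reconcile ``a.e.~$t$'' with the fixed countable set of levels chosen to cover $J_f$, for instance by showing that $\nu_{E^f_t}(x)$ is independent of $t\in(f^\wedge(x),f^\vee(x))$ --- which is again a locality-of-normal statement.

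For the second step, the same identification problem recurs: you treat $\nu_E(x)$ and $\nu_F(x)$ as Euclidean vectors orthogonal to the blow-up hyperplanes, which needs justification via good coordinates. Once that is handled, your transversality argument is salvageable but unnecessarily involved: writing $\mu:=|\DIFF\chi_E|\wedge|\DIFF\chi_F|=\rho\,|\DIFF\chi_E|=\sigma\,|\DIFF\chi_F|$ by Radon--Nikod\'ym, for $\mu$-a.e.~$x\in\FF E\cap\FF F$ one has $\rho(x),\sigma(x)>0$ and $x$ a Lebesgue point of both densities (these measures being asymptotically doubling by Proposition~\ref{zuppa}); then the blow-ups of $\mu$, normalized by $r^{n-1}$, equal both $\rho(x)\Theta_n(\mass,x)\mathcal H^{n-1}\llcorner\partial H_E$ and $\sigma(x)\Theta_n(\mass,x)\mathcal H^{n-1}\llcorner\partial H_F$, and equating these immediately forces $\partial H_E=\partial H_F$ with no transversality or semicontinuity argument needed. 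Your remark that this is ``substantially simpler'' than Lemma~\ref{mainlemma} is correct, but the density comparison is the part that actually does the work and it is the step your sketch compresses into a single clause.
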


\begin{proof}[Proof of \Cref{thm:RankOne}]
We first notice that for every $i=1,\dots,k$,$$(\nu_F)_i=\dv{|\DIFF F_i|}{|\DIFF F|}\nu_{F_i}\qquad|\DIFF F|\text{-a.e.}$$
The conclusion on the jump part is given by Lemma \ref{rank1lem1} applied to every pair of components of $F$ together with the well known fact that for every $i=1,\dots,k$, $|\DIFF F_i|(J_F\setminus J_{F_i})=0$. On the Cantor part, the result follows from Lemma \ref{rank1lem0} applied to every pair of components of $F$. 
\end{proof}

\appendix
\section{Rectifiability of the reduced boundary}\label{sec:Appendix}

In this appendix, we give an alternative proof of the known fact that reduced boundaries of sets of finite perimeter in finite dimensional $\RCD$ spaces are rectifiable. Roughly speaking, this is a consequence of the rectifiability result of \cite{BateMM} and the uniqueness of tangents to sets of finite perimeter proved in \cite{bru2019rectifiability}, once one takes into account the regularity result \Cref{thm:const_dim_cod1}.

Let us recall part of the statement of \cite[Theorem 1.2]{BateMM}.
\begin{thm}\label{thm:BATE}
	Let $(\XX,\dist)$ be a complete metric space, $k\in\mathbb N$, and $S\subseteq\XX$ such that $\mathcal{H}^k(S)<\infty$. Hence the following are equivalent:
	\begin{enumerate}
		\item $S$ is $k$-rectifiable,
		\item for $\mathcal{H}^k$-almost every $x\in S$ we have $\underline\Theta_{k}(S,x)>0$, and the existence of a $k$-dimensional Banach space $(\mathbb R^k,\|\cdot\|_k)$ such that 
		\begin{equation}\label{eqn:YA}
			\mathrm{Tan}_x(\XX,\dist,\mathcal{H}^k\llcorner S)=\{(\mathbb R^k,\|\cdot\|_x,\mathcal{H}^k,0)\}.
		\end{equation}
	\end{enumerate}
\end{thm}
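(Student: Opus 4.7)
The plan is to treat the two implications separately. The direction $(1)\Rightarrow(2)$ is the routine one: if $S$ is $k$-rectifiable, one covers $S$ up to an $\mathcal H^k$-null set by countably many biLipschitz images $\varphi_i(A_i)$ of Borel sets $A_i\subseteq\RR^k$. At $\mathcal H^k$-a.e.\ point of each piece, Kirchheim's metric differentiability theorem produces a seminorm which, thanks to the biLipschitz property, is actually a genuine norm $\|\cdot\|_x$; this seminorm governs the infinitesimal geometry and forces the pmGH tangent at $x$ to be the flat normed space $(\RR^k,\|\cdot\|_x,\mathcal H^k,0)$. Positivity of $\underline\Theta_k(S,x)$ at $\mathcal H^k$-a.e.\ $x$ follows from the metric area formula applied to the parametrisation.

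The core is the reverse implication $(2)\Rightarrow(1)$, which is a Marstrand--Mattila type rectifiability criterion in the metric setting. I would first localise the hypothesis via a Lusin/Egorov-type selection: partition $S$, up to a set of arbitrarily small $\mathcal H^k$-measure, into countably many Borel pieces $S_j$ on which the tangent norm $\|\cdot\|_x$ is uniformly close to a fixed Banach norm $\|\cdot\|_j$, the lower density $\underline\Theta_k(S,x)$ is bounded below by a positive constant, and the convergence of the rescalings of $(\XX,r^{-1}\dist,(\mathcal H^k\llcorner S)_x^r,x)$ to the flat tangent is quantitative and uniform in $x\in S_j$. It then suffices to show that each $S_j$ is $k$-rectifiable.

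On such a uniform piece, the decisive step is to show that at sufficiently small scales around $\mathcal H^k$-a.e.\ $x\in S_j$, a definite fraction of $S_j\cap B_r(x)$ admits a biLipschitz parametrisation by a subset of $(\RR^k,\|\cdot\|_j)$. The idea is to exploit near-flatness of the tangent to build an approximate projection from $S_j\cap B_r(x)$ onto an $\varepsilon$-isometric copy of $(\RR^k,\|\cdot\|_j)$, and then to extract, via a stopping-time / Federer--Fleming covering argument combined with the positive lower density bound, a large measurable subset on which this projection is genuinely biLipschitz. Iterating on the residual set at finer scales produces a countable biLipschitz cover of $S_j$, hence $k$-rectifiability.

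The hardest step is precisely this passage from \emph{flatness of the tangent} to the \emph{existence of a macroscopic biLipschitz piece}: in the absence of a linear ambient structure there is no canonical projection onto a $k$-dimensional subspace, and the density together with the flatness must be used quantitatively to rule out collapse of the would-be parametrisation. In Euclidean space this is the heart of Preiss's density theorem; in Bate's framework the substitute for linear projections is manufactured from Alberti representations and a careful analysis of the metric differential, and this is the main obstacle in the proof plan above.
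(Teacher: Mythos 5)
This theorem is not proved in the paper at all: it is cited verbatim from Bate's work, reference \cite{BateMM} (Theorem 1.2 there), and the paper simply invokes it as a black box in the appendix. There is therefore no ``paper's own proof'' to compare your proposal against.

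As for the sketch itself: your account of $(1)\Rightarrow(2)$ via a countable biLipschitz cover, Kirchheim's metric differentiability theorem, and the metric area formula is the standard route and is essentially correct; this direction is indeed the easy one. Your account of $(2)\Rightarrow(1)$, however, does not constitute a proof plan that could be filled in without essentially reproving Bate's theorem. The Lusin/Egorov reduction to uniform pieces $S_j$ is reasonable, but the decisive claim --- that near-flatness of the tangent plus a positive lower density bound yields a macroscopic biLipschitz piece --- is exactly where the entire difficulty lies, and ``approximate projection onto an $\varepsilon$-isometric copy of $(\RR^k,\|\cdot\|_j)$'' has no canonical meaning in a bare metric space: there is no ambient linear structure to project within, and one cannot manufacture such a map from the tangent hypothesis alone without the substantial machinery (Alberti representations, weak tangent fields, the analysis of ``fragmented'' Lipschitz maps, and the perturbation arguments ruling out degenerate directions) that occupies most of \cite{BateMM}. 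You acknowledge this gap explicitly at the end, which is honest, but it means the proposal is a description of the shape of the problem rather than a proof. A ``stopping-time / Federer--Fleming covering argument'' is not a substitute for Bate's actual mechanism; in particular, nothing in your sketch addresses how to preclude the lower-order collapse that the Marstrand--Mattila criterion must rule out, which in the metric setting is genuinely harder than in $\RR^n$ and cannot be handled by density estimates alone.
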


Let us fix $(\XX,\dist,\mass)$ an $\mathrm{RCD}(K,N)$ space of essential dimension $n$. Let $E\subseteq\XX$ be a set of locally finite perimeter. Now by \Cref{thm:const_dim_cod1} and the first part of the argument of \Cref{thm:repr_per} we have that:
\begin{enumerate}
	\item $|\DIFF \chi_E|(\XX\setminus \mathcal{R}_n^*)=0$, and hence $|\DIFF \chi_E|$ is concentrated on $\mathcal{F}E$;
	\item $\mathcal{H}^{n-1}\llcorner \mathcal{F}E$ is a $\sigma$-finite Borel measure that is mutually absolutely continuous with respect to $|\DIFF \chi_E|$. Notice that for the precise computation of the density of $|\DIFF \chi_E|$ with respect to $\mathcal{H}^{n-1}\llcorner \mathcal{F}E$ in \Cref{thm:repr_per} we needed the rectifiability of $\mathcal{F}E$, that we will not use in the following argument.
	
	Hence let us call $f\in L^1_{\mathrm{loc}}(|\DIFF \chi_E|)$ the function such that $\mathcal{H}^{n-1}\llcorner\mathcal{F}E=f|\DIFF \chi_E|$, and let $\mathcal{D}\subseteq \mathcal{F}E$ be the set of the Lebesgue points of $f$ with respect to the asymptotically doubling measure $|\DIFF \chi_E|$ that are also differentiability points of $\mathcal{H}^{n-1}\llcorner\mathcal{F}E$ with respect to $|\DIFF \chi_E|$, i.e., for every $x\in \mathcal{D}$,
	\begin{equation}\label{eqn:InAverage}
		\lim_{r\to 0}\fint_{B_r(x)}\left|f-f(x)\right|\dd|\DIFF\chi_E|=0,
	\end{equation}
	and 
	\begin{equation}\label{eqn:fdensita}
		f(x)=\lim_{r\to 0}\frac{\mathcal{H}^{n-1}\llcorner\mathcal{F}E(B_r(x))}{|\DIFF\chi_E|(B_r(x))}.
	\end{equation}
	Notice that $|\DIFF \chi_E|(X\setminus \mathcal{D})=\mathcal{H}^{n-1}(\mathcal{F}E\setminus\mathcal{D})=0$, due to Lebesgue Differentiation Theorem \cite[page 77]{HKST15}, and Lebesgue--Radon--Nikod\'{y}m Theorem \cite[page 81 and Remark 3.4.29]{HKST15}. Notice, moreover, that since $|\DIFF\chi_E|$ is mutually absolutely continuous with respect to $\mathcal{H}^{n-1}\llcorner\mathcal{F}E$, hence $f(x)>0$ for $|\DIFF\chi_E|$-almost every $x\in \XX$, or equivalently for $\mathcal{H}^{n-1}\llcorner\mathcal{F}E$-almost every $x\in \XX$.
\end{enumerate}
Let us now prove that $\mathcal{F}E$ is $(n-1)$-rectifiable by exploiting \Cref{thm:BATE}. Let us verify item (2) in there. By the third line in \eqref{usefullimits}, together with the fact that $x\in\mathcal{R}_n^*$, and \eqref{eqn:fdensita}, we get that $\underline\Theta_{n-1}(\mathcal{F}E,x)>0$ for every $x\in \mathcal {D}$, and hence for $\mathcal{H}^{n-1}$-almost every $x\in\mathcal{F}E$. Let us now verify the second part of item (2). Let us fix $x\in\mathcal{D}$, and let us take an arbitrary sequence $r_i\to 0$. We have that up to subsequences
\[
\XX_i:=(\XX, r_i^{-1}\dist,\mass_x^{r_i},x,E)\to (\mathbb R,\dist_e,\underline{\mathcal{L}}^n,0,\{x_n>0\}),
\]
and in a realisation of the previous convergence, we have that $|\DIFF\chi_E|_{\XX_i}$ weakly converge to $|\DIFF\chi_{\{x_n>0\}}|$. For the sake of clarity, we denoted with $|\DIFF \chi_E|_{\XX_i}$ the perimeter measure of $E$ in the rescaled space $\XX_i$. Notice that $|\DIFF \chi_E|_{\XX_i}=r_i/C_x^{r_i}|\DIFF\chi_E|$, where $|\DIFF\chi_E|$ is the perimeter measure on $\XX$. Let $g\in C_{\mathrm{bs}}(\ZZ)$, where $\ZZ$ is a realisation of the previous convergence. Hence we have
\[\begin{split}
\int_{\XX_i} g\,\diff\frac{r_i\mathcal{H}^{n-1}\llcorner FE}{C_x^{r_i}} &= \int_{\XX_i} g f\,\diff|\DIFF\chi_E|_{\XX_i}\\
& = \int_{\XX_i} g(y) f(x)\,\diff|\DIFF\chi_E|_{\XX_i}(y) + \int_{\XX_i} g(y) (f(y)-f(x))\,\diff|\DIFF\chi_E|_{\XX_i}(y),
\end{split}\]
and hence, by using \eqref{eqn:InAverage} and the fact that $|\DIFF\chi_E|(B_{r_i}(x))\sim \frac{(n+1)\omega_{n-1}}{\omega_n}\frac{C_x^{r_i}}{r_i}$ as a consequence of the second and third line of \eqref{usefullimits}, we conclude that\footnote{Notice that in the following equation we are considering $\mathcal{H}^{n-1}\llcorner\mathcal{F}E$ in the original space $\XX$ and not in the rescaled space}
\begin{equation}\label{eqn:RESCALED}
	\frac{r_i\mathcal{H}^{n-1}\llcorner\mathcal{F}E}{C_x^{r_i}}\rightharpoonup f(x)|\DIFF \chi_{\{x_n>0\}}|,
\end{equation}
in the realisation $\ZZ$. This immediately implies that 
\[
\frac{\mathcal{H}^{n-1}\llcorner\mathcal{F}E}{\mathcal{H}^{n-1}\llcorner\mathcal{F}E(B_{r_i}(x))}\rightharpoonup \mathcal{H}^{n-1}\llcorner\{x_n=0\},
\]
because $\mathcal{H}^{n-1}\llcorner\{x_n=0\}$ is the surface measure on $\{x_n=0\}$ that gives measure one to the unit ball. 

Hence we have shown that for every $x\in\mathcal{D}$ and every sequence $r_i\to 0$, there is a realisation $\ZZ$ in which one has the convergence
\[
\left(\XX,\frac{\dist}{r_i},\frac{\mathcal{H}^{n-1}\llcorner\mathcal{F}E}{\mathcal{H}^{n-1}\llcorner\mathcal{F}E(B_{r_i}(x))},x\right)\to (\mathbb R^{n-1},\dist_e,\mathcal{H}^{n-1},0),
\]
which is exactly what one needed to show in order to verify \eqref{eqn:YA}  (recall \cite[Proposition 2.13]{BateMM}). Hence the application of \Cref{thm:BATE} gives the $(n-1)$-rectifiability of $\mathcal{F}E$.

\end{document}